\newtheorem{theorem}{Theorem}[section]
\newtheorem{lemma}[theorem]{Lemma}
\newtheorem{proposition}[theorem]{Proposition}
\newtheorem{corollary}[theorem]{Corollary}
\newtheorem{conjecture}[theorem]{Conjecture}
\theoremstyle{definition}
\newtheorem{definition}[theorem]{Definition}
\newtheorem{definition-lemma}[theorem]{Definition-Lemma}
\newtheorem{construction}[theorem]{Construction}
\newtheorem{example}[theorem]{Example}
\theoremstyle{remark}
\newtheorem{remark}[theorem]{Remark}
\numberwithin{equation}{section}
\numberwithin{figure}{section}
\newcommand{\ZZ} {\mathbb{Z}}
\newcommand{\QQ} {\mathbb{Q}}
\newcommand{\RR} {\mathbb{R}}
\newcommand{\bR} {\RR}
\newcommand{\PP} {\mathbb{P}}
\renewcommand{\AA} {\mathbb{A}}
\newcommand{\GG} {\mathbb{G}}
\newcommand {\s}  {{\bf s}}
\newcommand {\bfs} {\s} 
\newcommand {\shA}  {\mathcal{A}}
\newcommand {\shD}  {\mathcal{D}}
\newcommand {\shI}  {\mathcal{I}}
\newcommand {\shL}  {\mathcal{L}}
\newcommand {\shO}  {\mathcal{O}}
\newcommand {\shR}  {\mathcal{R}}
\newcommand {\shS}  {\mathcal{S}}
\newcommand {\shX}  {\mathcal{X}}
\newcommand {\ft} {{\mathrm{ft}}}
\newcommand {\foT}  {\mathfrak{T}}
\newcommand {\foU}  {\mathfrak{U}}
\newcommand {\fom}  {\mathfrak{m}}
\newcommand {\Cl}  {\operatorname{Cl}}
\newcommand {\coker} {\operatorname{coker}}
\newcommand {\Gm} {\GG_m}
\newcommand {\MW} {\operatorname{MW}}
\newcommand {\prin}  {\mathrm{prin}}
\newcommand {\Hom}  {\operatorname{Hom}}
\newcommand {\ind}  {\operatorname{ind}}
\newcommand {\kk} {\Bbbk}
\renewcommand {\max} {{\operatorname{max}}}
\newcommand {\Pic}  {\operatorname{Pic}}
\newcommand {\PGL}  {\operatorname{PGL}}
\newcommand {\rank} {\operatorname{rank}}
\newcommand {\sat}  {\operatorname{sat}}
\newcommand {\Sing} {\operatorname{Sing}}
\newcommand {\Spec} {\operatorname{Spec}}
\newcommand {\TV}{\operatorname{TV}}
\newcommand {\tTV}{\widetilde{\TV}}
\newcommand {\Cox}{\operatorname{Cox}}
\newcommand {\sgn} {\operatorname{sgn}}
\newcommand {\trop} {\mathrm{trop}}
\newcommand {\NE}   {\operatorname{NE}}
\newcommand {\up}  {\mathrm{up}}
\newcommand {\Mat} {\operatorname{Mat}}
\newcommand {\uf} {\mathrm{uf}}
\def\mapright#1{\smash{
  \mathop{\longrightarrow}\limits^{#1}}}
\def\oM{\overline{M}}
\def\bP{\Bbb P}
\def\cO{\Cal O}
\def\rank{\operatorname{rank}}
\def\bZ{\Bbb Z}
\def\bQ{\Bbb Q}
\def\bG{\Bbb G}
\def\bR{\Bbb R}
\def\bA{\Bbb A}
\def\oY{\bar{Y}}
\def\oD{\bar{D}}
\def\oSigma{\bar{\Sigma}}
\def\cA{\Cal A}
\def\cD{\Cal D}
\def\tS{\tilde S}
\def\cA{\Cal A}
\def\oS{\overline{S}}
\def\tS{\tilde{S}}
\def\oN{\overline{N}}
\def\cY{\Cal Y}
\def\cD{\Cal D}
\def\cX{\Cal X}
\def\oV{\overline{V}}
\def\Spec{\operatorname{Spec}}
\def\PGL{\operatorname{PGL}}
\def\cO{\Cal O}
\def\Sing{\operatorname{Sing}}
\def\Pic{\operatorname{Pic}}
\def\Spec{\operatorname{Spec}}
\def\Hom{\operatorname{Hom}}
\def\Cal{\mathcal}
\def\Efi#1#2#3#4#5{\displaystyle
#1\!\!-\!\!#2
\!\!-\!\!#3
\!\!-\!\!#4
\hskip-24.2pt\lower4.5pt\hbox{${\scriptstyle|}
\hskip-3.35pt\lower6pt\hbox{$#5$}$}}
\def\Evia#1#2#3#4#5{\displaystyle
#1\!\!-\!\!#2
\!\!-\!\!#3
\hskip-24.2pt\lower4.5pt\hbox{${\scriptstyle|}
\hskip-3.35pt\lower6pt\hbox{$#4\!\!-\!\!\!-\!\!\!-\!\!$}$\hskip2.3pt${\scriptstyle|}
\hskip-3.35pt\lower6pt\hbox{$#5$}$}}
\def\Ezia#1#2#3#4{\displaystyle
#1\!\!-\!\!#2
\hskip-14.8pt\lower4.5pt\hbox{${\scriptstyle|}
\hskip-3.35pt\lower6pt\hbox{$#3\!\!-\!\!$}$\hskip2.3pt${\scriptstyle|}
\hskip-3.35pt\lower6pt\hbox{$#4$}$}}
\def\Efia#1#2#3#4#5#6{\displaystyle
#1\!\!-\!\!#2
\!\!-\!\!#3
\!\!-\!\!#4
\hskip-24.2pt\lower4.5pt\hbox{${\scriptstyle|}
\hskip-3.35pt\lower6pt\hbox{$#5$}$\hskip5.7pt${\scriptstyle|}
\hskip-3.35pt\lower6pt\hbox{$#6$}$}}
\def\Esi#1#2#3#4#5#6{\displaystyle
#1\!\!-\!\!#2
\!\!-\!\!#3
\!\!-\!\!#4\!\!-\!\!#5
\hskip-24.2pt\lower4.5pt\hbox{${\scriptstyle|}
\hskip-3.35pt\lower6pt\hbox{$#6$
\lower3pt\hbox{\ }}$}}
\def\Esia#1#2#3#4#5#6#7{\displaystyle

#1\!\!-\!\!#2
\!\!-\!\!#3
\!\!-\!\!#4\!\!-\!\!#5
\hskip-24.2pt\lower4.5pt\hbox{${\scriptstyle|}
\hskip-3.35pt\lower6pt\hbox{$#6$\hskip-3.8pt\lower4.5pt\hbox{${\scriptstyle|}
\hskip-3.35pt\lower6pt\hbox{$#7$}$}}
\lower3pt\hbox{\ }$}}
\def\Ese#1#2#3#4#5#6#7{\displaystyle
#1\!\!-\!\!#2
\!\!-\!\!#3
\!\!-\!\!#4\!\!-\!\!#5\!\!-\!\!#6
\hskip-33.6pt\lower4.5pt\hbox{${\scriptstyle|}
\hskip-3.35pt\lower6pt\hbox{$#7$
\lower3pt\hbox{\ }
}$}}
\def\Esea#1#2#3#4#5#6#7#8{\displaystyle
#1\!\!-\!\!#2
\!\!-\!\!#3
\!\!-\!\!#4\!\!-\!\!#5\!\!-\!\!#6\!\!-\!\!#7
\hskip-33.6pt\lower4.5pt\hbox{${\scriptstyle|}
\hskip-3.35pt\lower6pt\hbox{$#8$
\lower3pt\hbox{\ }
}$}}
\def\Eei#1#2#3#4#5#6#7#8{\displaystyle
#1\!\!-\!\!#2
\!\!-\!\!#3
\!\!-\!\!#4\!\!-\!\!#5\!\!-\!\!#6\!\!-\!\!#7
\hskip-43.2pt\lower4.5pt\hbox{${\scriptstyle|}
\hskip-3.35pt\lower6pt\hbox{$#8$
\lower3pt\hbox{\ }
}$}}
\def\Eeia#1#2#3#4#5#6#7#8#9{{\displaystyle
#1\!\!-\!\!#2
\!\!-\!\!#3
\!\!-\!\!#4\!\!-\!\!#5\!\!-\!\!#6\!\!-\!\!#7\!\!-\!\!#8
\hskip-52.2pt\lower4.5pt\hbox{${\scriptstyle|}
\hskip-3.35pt\lower6pt\hbox{$#9$
\lower3pt\hbox{\ }
}$}}}
\def\trop{\operatorname{trop}}
\def\oM{\overline{M}}
\def\os{\overline{S}}
\def\tS{\tilde{S}}
\def\ts{\tS}
\def\ts7{\tilde{S}_7}
\def\bP{\Bbb P}
\def\Hom{\operatorname{Hom}}
\def\cO{\Cal O}
\def\cD{\Cal D}
\def\Pic{\operatorname{Pic}}
\def\rank{\operatorname{rank}}
\def\bZ{\Bbb Z}
\def\bQ{\Bbb Q}
\def\bG{\Bbb G}
\def\bR{\Bbb R}
\def\bA{\Bbb A}
\def\cA{\Cal A}
\def\cD{\Cal D}
\def\tS{\tilde S}
\def\cA{\Cal A}
\def\oS{\overline{S}}
\def\tS{\tilde{S}}
\def\oN{\overline{N}}
\def\cY{\Cal Y}
\def\cX{\Cal X}
\def\Spec{\operatorname{Spec}}
\def\PGL{\operatorname{PGL}}
\def\Ker{\operatorname{Ker}}
\def\Sing{\operatorname{Sing}}
\def\Cal{\mathcal}
\def\Pic{\operatorname{Pic}}
\def\oN{\overline{N}}
\def\os7p{\oS_7'}
\def\os7{\oS_7}
\def\on6{\oN_6}
\def\n6{\oN_6}
\def\brg0{(\bR_{\geq 0})}
\def\mydate{\ifcase\month \or January\or February\or March\or
April\or May\or June\or July\or August\or September\or October\or 
November\or December\fi \space\number\day,\space\number\year}
\begin{document}

\title[Birational geometry of cluster algebras]
{Birational geometry of cluster algebras}
\author{Mark Gross} 
\address{DPMMS, Centre for Mathematical Sciences, Wilberforce Road,
Cambridge, CB3 0WB United Kingdom}
\email{mgross@dpmms.cam.ac.uk}
\author{Paul Hacking}
\address{Deptartment of Mathematics and Statistics, Lederle Graduate
Research Tower, University of Massachusetts, Amherst, MA 01003-9305}
\email{hacking@math.umass.edu}

\author{Sean Keel}
\address{Department of Mathematics, 1 University Station C1200, Austin,
TX 78712-0257}
\email{keel@math.utexas.edu}
\begin{abstract} 
We give a geometric interpretation of cluster varieties in terms of 
blowups of toric varieties. This enables us to provide, among other
results, an elementary geometric proof of the Laurent phenomenon for
cluster algebras (of geometric type), extend
Speyer's example \cite{Sp13} of 
upper cluster algebras which are not finitely generated,
and show that the Fock-Goncharov dual basis conjecture is usually false.
\end{abstract}

\maketitle
\tableofcontents
\bigskip

\section*{Introduction}

Cluster algebras were introduced by Fomin and Zelevinsky in \cite{FZ02a}.
Fock and Goncharov introduced a more geometric point of view in 
\cite{FG09}, introducing the $\cA$ and $\cX$ cluster varieties constructed
by gluing together ``seed tori'' via birational maps known as cluster
transformations. 

In this note, motivated by our study of log Calabi-Yau varieties initiated
in the two-dimensional case in \cite{GHK11}, 
we give a simple alternate explanation of basic constructions in the theory
of cluster algebras in terms of blowups of toric varieties. Each seed
roughly
gives a description of the $\cA$ or $\cX$ cluster variety\footnote{More 
precisely, the $\shA_{\prin}$, $\shA_t$ (defined in
\S \ref{clusterreviewsection}) or $\shX$ cluster variety.} 
as a blowup of a toric variety, and a mutation of the seed corresponds
to changing the blowup description by an elementary transformation of a 
$\bP^1$-bundle.
Certain global features of the cluster variety not obvious from the expression as a union
of tori are easily seen from this construction. For example, it gives a simple
geometric explanation for the Laurent phenomenon (originally proved in \cite{FZ02b}), 
see Corollary \ref{lpcor}. From the blowup picture  it is clear that the Fock-Goncharov dual
basis conjecture, particularly the statement that tropical points of the
Langlands dual $\cA$ parameterize a natural basis of regular functions on
$\cX$, can fail frequently, see \S \ref{FGcounterexamplessection}. 

In more detail, in \S\ref{motivationsection}, we explain the basic
philosophical point of view demonstrating how a study of log Calabi-Yau
varieties can naturally lead to the basic notions of cluster algebras.
This section can be read as an extended introduction; its role in the paper is purely
motivational. In \S\ref{clusterreviewsection}, 
we review the definitions of cluster varieties, following \cite{FG09}. We pay
special attention to the precise procedure for gluing tori via cluster
transformations, as this has not been discussed to the precision we need
in the literature.

\S\ref{gcvsec} is the heart of the paper. Here we describe how cluster
transformations, which a priori are birational maps between algebraic tori,
can be viewed naturally as isomorphisms between blowups of certain associated
toric varieties. In this manner, cluster transformations can be 
interpreted as elementary transformations, a standard procedure for
modifying $\PP^1$-bundles in algebraic geometry. This procedure blows up
a codimension two center in a $\PP^1$-bundle meeting any $\PP^1$ fibre
in at most one point, and blows down the proper
transform of the union of $\PP^1$ fibres meeting the center.

This is a very general construction, covered in \S\ref{elementarysection};
we then specialise to the case of the $\cA$ and $\cX$ cluster varieties
in \S\ref{codimtwosection}. Unfortunately, our construction
does not work in general
for the $\cA$ cluster variety, but does work for the $\cA$ variety
\emph{with principal coefficients}. This variety $\shA_{\prin}$ fibres
over an algebraic torus with $\cA$ being the fibre over the identity element
of the torus. Properties such as the Laurent phenomenon for $\cA$ can be
deduced from that for $\cA_{\prin}$. Many of the phenomena discussed here
also work for a very general fibre $\shA_t$ 
of the map from $\shA_{\prin}$; we call
such a cluster variety an $\shA$ cluster variety \emph{with general 
coefficients}. The algebra of regular functions of such a cluster variety
are of the kind considered by Speyer in \cite{Sp13}.

The key result is Theorem 
\ref{maingeometrictheorem}, which gives the precise description of 
the $\cX$, principal $\cA$ cluster varieties and $\cA$ cluster varieties
with general coefficients up to codimension two in terms
of a blowup of a toric variety. The toric variety and the center of
the blowup is specified very directly by the seed data determining
the cluster variety. An immediate consequence is the Laurent phenomenon,
Corollary \ref{lpcor}.

In \S\ref{univtorsorsection}, we give another description 
of the principal $\cA$ cluster variety and $\cA$ cluster variety with
general coefficients in terms of line bundles on the $\cX$ cluster variety.
There is in fact an algebraic torus which acts on $\cA_{\prin}$, and the
quotient of this action is $\cX$, making $\cA_{\prin}$ a torsor of $\cX$.
We give a precise description of this family in terms of line bundles on
$\cX$. Furthermore, there are tori $T_{K^*}$ and $T_{K^{\circ}}$ such that
there is a map $\cX\rightarrow T_{K^*}$ and an action of $T_{K^{\circ}}$ on
any $\cA$ cluster variety with general coefficients determined by the
seed data. We show that for any such sufficiently general $\cA$ cluster
variety $\cA_t$, there is a $\phi=\phi(t)\in T_{K^*}$ such that up
to codimension two, $\cA_t$ is the \emph{universal torsor} of $\cX_{\phi}$, 
essentially obtained as ${\bf Spec}\,
\bigoplus_{\shL\in \Pic(\shX_{\phi})}\shL$. In particular, this allows us
to identify the corresponding upper cluster algebra
with the Cox ring of $\shX_\phi$. This is a slight simplification of the
discussion: see the main text for precise statements. 
The Cox ring of any variety
with finitely generated torsion free Picard group is factorial, see \cite{Ar08} and 
\cite{BH03}. This explains
the ubiquity of factorial cluster algebras remarked on, e.g., in \cite{K12},
\S 4.6. 

The remainder of the paper now restricts to the case that the skew-symmetric
matrix determining the cluster algebra has rank $2$. This case is
quite easy to interpret geometrically, since now the family $\cX\rightarrow
T_{K^*}$ is a family of surfaces. In fact, the fibres are essentially
the interiors of \emph{Looijenga pairs}.
A Looijenga pair is a pair $(Y,D)$ where $Y$ is a rational surface
and $D\in |-K_Y|$ is a cycle of rational curves, $U := Y \setminus D$ is 
the interior.  We study moduli of such pairs 
in \cite{GHK12}. Here, we show (Theorem \ref{Xisuniversalfamily}) 
that essentially $\cX\rightarrow T_{K^*}$ 
coincides with a type of universal family constructed in \cite{GHK12}.
Our construction implies that in many cases, the kernel of
the skew-symmetric matrix carries a canonical symmetric form, invariant
under mutations, see Theorem \ref{symmetricform}. Though not (as far
as we know) previously observed, this symmetric form controls the gross 
geometry of $\cX$, in particular the generic fibre of 
$\cX \rightarrow T_{K^*}$. Indeed,
the fibre is affine if and only if 
the form is negative definite; when the form is indefinite
the fibres are the complement of a single point
in a compact complex analytic space, and thus have no non-constant 
global functions. Thus
in this indefinite case (which from the blowup point of view is the generic
situation) the only global functions on $\cX$ are pulled back from $T_{K^*}$,
contradicting the dual basis conjecture of \cite{FG09}, see
\S \ref{FGcounterexamplessection}.

In \S\ref{nfgsec}, we give a general procedure for constructing
upper cluster algebras with general or principal coefficients which are
not finitely generated. These examples generalize that given by Speyer
in \cite{Sp13}, and suggest that ``most'' upper cluster algebras 
are not finitely generated. These examples arise because Cox rings tend
not to be finitely generated. Indeed, 
finite generation of the Cox ring of a projective
variety is a very strong (Mori Dream Space) condition, see \cite{HK00}. 
\medskip

In this paper, we will always work over a field $\kk$ of characteristic
zero.

\medskip

\emph{Acknowledgments}: The genesis of
our results 
on cluster varieties 
was a conversation with M. Kontsevich. He pointed out to us that 
(in the skew-symmetric case) a seed is the same thing as a collection of vectors in a symplectic lattice 
and the piecewise linear cluster mutation is just like {\it moving worms} in
the integral affine manifolds central to mirror symmetry for open Calabi-Yau
varieties,  see e.g., \cite{GHK11}. 
Before this conversation we had been incorrectly 
assuming the cluster picture was a very special case of the mirror construction in 
\cite{GHK11}. However,
Kontsevich's remarks led us to the correct view that in dimension two,
the scope of the two theories are exactly the same. This in turn led
to the simple blowup description of cluster varieties we describe here. 

We first learned of the connection between mirror symmetry
and cluster varieties from conversations with A. Neitzke. We received considerable
inspiration from conversations with V.\ Fock, S.\ Fomin, A.\ Goncharov, 
B.\ Keller, B.\ Leclerc, G.\ Musiker, M.\ Shapiro, Y.\ Soibelman, and
D.\ Speyer. Special thanks go to Greg
Muller, who pointed out a crucial mistake in a draft version of this paper, 
see Remark \ref{ourscrewup}.

The first author
was partially supported by NSF grant DMS-1105871 and DMS-1262531, the second by
NSF grants DMS-0968824 and DMS-1201439, and the third by NSF grant DMS-0854747.

\section{Log Calabi-Yau varieties and a geometric motivation for
cluster varieties.}
\label{motivationsection}

To a geometer, at least to the three of us, the definition of a cluster
algebra is rather bizarre and overwhelming. Here we explain the geometric
motivation in terms of log Calabi-Yau varieties.
There are two
elementary constructions of log Calabi-Yau (CY) varieties. 
The first method is to glue together tori in such a way that the volume forms 
patch. The second method
is to blow up a toric variety along a codimension two center which 
is a smooth
divisor in a boundary divisor, and then remove
the strict transform of the toric boundary. As we will see, 
the simplest instances of either construction are closely related, and
either leads to cluster varieties. The first approach 
extends the viewpoint of \cite{FG09}, the second was inspired by 
\cite{L81}. 

\begin{definition} \label{cydef} Let $(Y,D)$ be a smooth projective variety with a 
normal crossing divisor, and let $U= Y \setminus D$. By \cite{I77}, 
the vector subspace
\[
H^0(Y,\omega_Y(D)^{\otimes m}) \subset H^0(U,\omega_U^{\otimes m})
\]
(where the inclusion is induced by restriction) 
depends only on $U$, i.e., is independent of the choice of normal crossing
compactification. We say $U$ is \emph{log Calabi-Yau} if for all $m$ this 
subspace is one-dimensional, generated by 
$\Omega^{\otimes m}$ for a volume (i.e., nowhere
vanishing) form $\Omega \in H^0(U,\omega_U)$. Note that by definition
$\Omega$ is unique up to scaling. 
\end{definition}


In practice, log Calabi-Yau varieties
are often recognized using the following:

\begin{lemma} Let $(Y,D)$ be a dlt pair with $K_Y + D$ 
trivial (in particular Cartier), and $Y$ projective. Let $U \subset Y \setminus D$
be a smooth open subset, with $(Y \setminus D ) \setminus U$ of codimension 
at least $2$. Then $U$ is log CY. \end{lemma}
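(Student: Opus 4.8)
The plan is to reduce to a pluricanonical computation on a genuine snc pair produced by a log resolution, using the dlt hypothesis only through positivity of discrepancies, and then to carry the answer back to $U$ across a codimension--two locus. To begin, since $K_Y+D$ is trivial and Cartier we may fix a nowhere--vanishing generator $\Omega$ of the line bundle $\O_Y(K_Y+D)=\omega_Y(D)$, and because $U$ is smooth and disjoint from $D$ we have $\omega_Y(D)|_U=\omega_U$; thus $\Omega_U:=\Omega|_U$ is a volume form on $U$, the distinguished form of Definition~\ref{cydef}. Note also that, $U$ being smooth and disjoint from $D$, it is contained in the open locus $Y^\circ\subseteq Y$ over which $(Y,D)$ is snc, and for a dlt pair $Y\setminus Y^\circ$ has codimension $\ge 2$.

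Next I would choose a log resolution $\pi\colon\tilde Y\to Y$ of $(Y,D)$ which is an isomorphism over $Y^\circ$, set $\tilde D:=\pi_*^{-1}D+\operatorname{Exc}(\pi)$ (reduced), an snc divisor, and observe that $(\tilde Y,\tilde D)$ is an snc compactification of $U':=\tilde Y\setminus\tilde D$, with $U\subseteq U'$ and $U'\setminus U$ of codimension $\ge 2$ inside $Y\setminus D$ (as $\pi$ is an isomorphism over $Y^\circ\supseteq U$ and $U\cap D=\emptyset$). The crux is the discrepancy computation: writing $K_{\tilde Y}+\pi_*^{-1}D=\pi^*(K_Y+D)+\sum_i a_iE_i$ over the $\pi$--exceptional $E_i$, we get
\[
K_{\tilde Y}+\tilde D=\pi^*(K_Y+D)+\sum_i(a_i+1)E_i,
\]
where $a_i+1>0$ because $(Y,D)$ is dlt and each $E_i$ has center in $Y\setminus Y^\circ$, while $a_i+1$ is an integer because $K_Y+D$ is Cartier; hence $F:=\sum_i(a_i+1)E_i$ is an effective $\pi$--exceptional divisor. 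Using $K_Y+D\sim0$ we then have $\omega_{\tilde Y}(\tilde D)^{\otimes m}\cong\O_{\tilde Y}(mF)$ for all $m\ge0$, and since $mF$ is effective and $\pi$--exceptional and $Y$ is normal, $\pi_*\O_{\tilde Y}(mF)=\O_Y$; therefore $H^0(\tilde Y,\omega_{\tilde Y}(\tilde D)^{\otimes m})=H^0(Y,\O_Y)=\kk$ is one--dimensional, with generator restricting on $U'$ to a scalar multiple of $\Omega_{U'}^{\otimes m}$, where $\Omega_{U'}=\pi^*\Omega|_{U'}$ extends $\Omega_U$.

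It then remains to transfer this from $U'$ to $U$. By \cite{I77} the subspace $H^0(V,\omega_V(E)^{\otimes m})\subseteq H^0(X,\omega_X^{\otimes m})$ attached to an snc compactification $(V,E)$ of a smooth variety $X$ is independent of the compactification, so it suffices to see it is unchanged when $X$ is shrunk by a closed subset of codimension $\ge 2$, applied to $U\subseteq U'$. Restriction gives an isomorphism $H^0(U',\omega_{U'}^{\otimes m})\xrightarrow{\sim}H^0(U,\omega_U^{\otimes m})$ (sections of the line bundle $\omega_{U'}^{\otimes m}$ extend over a codimension $\ge 2$ locus), taking $\Omega_{U'}^{\otimes m}$ to $\Omega_U^{\otimes m}$; moreover the ``pole of order at most $m$ along every divisor at infinity'' condition defining the subspace is insensitive to this shrinking, since a divisorial valuation that is at infinity for $U$ but not for $U'$ has center a proper closed subset of the smooth variety $U'$ and hence imposes no constraint on a form regular on $U'$. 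Thus $H^0(V,\omega_V(E)^{\otimes m})=\kk\cdot\Omega_U^{\otimes m}$ for every snc compactification of $U$, which is exactly the log CY condition.

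The main obstacle, in my view, is the bookkeeping caused by the two ways the hypotheses fall short of the snc--compactification setup: that $(Y,D)$ is only dlt rather than snc, and that $U$ may be strictly smaller than $Y\setminus D$. One must run the resolution compatibly with the snc locus, check that the exceptional discrepancies stay $>-1$ and integral so that $F$ is genuinely effective, and justify the codimension--two invariance of the pluricanonical subspace; each step is standard but they must be combined with care. (Attempting instead to arrange $\tilde Y\setminus\tilde D=U$ on the nose, by also blowing up the closure of $(Y\setminus D)\setminus U$, destroys the triviality of $K_Y+D$ and appears messier than the invariance argument above.)
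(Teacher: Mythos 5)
Your proof is correct and is essentially the paper's argument written out in full: the paper's two-sentence proof asserts exactly what your discrepancy computation establishes, namely that the dlt condition (together with $K_Y+D$ Cartier and trivial) lets one compute the pluricanonical subspace of Definition~\ref{cydef} on a log resolution of the dlt pair and find it one-dimensional, generated by the pullback of $\Omega$. Your final step, passing from $U'=\tilde Y\setminus\tilde D$ to the codimension-two-smaller open set $U$, is the same observation the paper records immediately afterwards as Lemma~1.4(2), so nothing in your route is genuinely different from the paper's.
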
 

For the definition of dlt (divisorial log terminal), see \cite{KM98}, 
Def.\ 2.37. As this section should be viewed as purely motivational, the
reader who wishes to avoid the technicalities of the minimal model
program should feel free to assume that the pair $(Y,D)$ is in fact normal
crossings.

\begin{proof} 
When $(Y,D)$ has normal crossings this is immediate from Definition \ref{cydef}. The definition of dlt
is such that the vector space of Definition \ref{cydef} can be computed using 
a dlt (instead of normal crossing) compactification. 
\end{proof}

\begin{remark}
The data $(Y,D)$ with $U\subseteq Y\setminus D$ as in the lemma
is called a \emph{minimal model} for $U$.
One example of a minimal model is a pair $(Y,D)$ with $D\in |-K_Y|$ a
reduced normal crossings divisor. This is a minimal model for $U=Y\setminus
D$. The main conjectures of the minimal model program would 
imply every log CY has a minimal model, see \cite{BCHM}.
\end{remark} 

\begin{lemma} \begin{enumerate}
\item
Let $U \subset V$ be an open subset, with $(U,\Omega)$ log CY. Then $V$ is
log CY if and only if $\Omega$ extends to a volume form on $V$, and in this case
$\Omega$ is a scalar multiple of the volume form of $V$.
\item Let $\mu: U \dasharrow V$ be a birational map between smooth
varieties which is an isomorphism outside codimension two subsets of
the domain and range. Then $U$ is log CY if and only if $V$ is.
\end{enumerate}
\end{lemma}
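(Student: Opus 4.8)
The plan is to prove both parts by reducing everything to the defining condition in Definition \ref{cydef}, namely the one-dimensionality of $H^0(Y,\omega_Y(D)^{\otimes m})$ for all $m$, together with the stated independence of this space from the choice of normal crossing compactification.

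For part (1): suppose $(U,\Omega)$ is log CY and $U\subseteq V$ is open. First I would observe that since $U$ is dense in $V$ and $V$ is smooth (hence normal, hence $U$ and $V$ have the same function field and the restriction map on spaces of rational top forms is an isomorphism), a volume form on $V$ restricts to a volume form on $U$, and conversely $\Omega$ extends to $V$ if and only if $\Omega$ has no zeros or poles along the components of $V\setminus U$. Now choose a smooth normal crossing compactification $(Y,D)$ of $V$; since $U\subseteq V\subseteq Y$, this is also a normal crossing compactification of $U$, with boundary $D_U := Y\setminus U\supseteq D=Y\setminus V$. Then for every $m$ there are inclusions
\[
H^0(Y,\omega_Y(D)^{\otimes m})\subseteq H^0(Y,\omega_Y(D_U)^{\otimes m})\subseteq H^0(U,\omega_U^{\otimes m}),
\]
and by Definition \ref{cydef} applied to $U$ the right-hand space's distinguished subspace (the middle one) is one-dimensional, spanned by $\Omega^{\otimes m}$. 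Hence the left-hand space is either $0$ or all of the middle space. So $V$ is log CY if and only if $H^0(Y,\omega_Y(D)^{\otimes m})\ne 0$ for all $m$, i.e.\ if and only if $\Omega^{\otimes m}\in H^0(Y,\omega_Y(D)^{\otimes m})$ for all $m$, i.e.\ if and only if $\Omega$ has at worst a simple pole along each component of $D$ and no zeros there — equivalently (taking $m$ large, poles of order $>1$ would be forced) $\Omega$ extends to a \emph{volume} form on the generic point of each component of $D$, hence to a volume form on $V$. The "in this case" clause is then immediate: $\Omega$ lies in the one-dimensional space $H^0(Y,\omega_Y(D)^{\otimes 1})$ which is spanned by the volume form of $V$, so the two agree up to scalar.

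For part (2): let $\mu\colon U\dashrightarrow V$ be birational between smooth varieties, an isomorphism outside codimension $\ge 2$ subsets $Z\subseteq U$, $W\subseteq V$. Set $U^\circ = U\setminus Z$, $V^\circ = V\setminus W$, so $\mu$ restricts to an isomorphism $U^\circ\xrightarrow{\sim}V^\circ$. By part (1), removing a codimension $\ge 2$ closed subset from a smooth variety does not change whether it is log CY (the volume form automatically extends across a codimension $\ge 2$ set by Hartogs, since the ambient variety is smooth hence $S_2$, so $U$ is log CY iff $U^\circ$ is, and likewise $V$ iff $V^\circ$); and being log CY is obviously invariant under isomorphism, so $U^\circ$ log CY $\iff$ $V^\circ$ log CY. Chaining these equivalences gives $U$ log CY $\iff$ $V$ log CY.

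The only real subtlety — and the step I would be most careful about — is the Hartogs-type extension used in part (2) (equivalently, the direction of part (1) that removing a codimension $\ge 2$ subset preserves "log CY"): one must check that a nowhere-vanishing top form on $U^\circ$ extends to a nowhere-vanishing top form on $U$, not merely to a regular top form. Regularity of the extension is standard normality/$S_2$, but nowhere-vanishing requires noting that the zero locus of the extended form is a divisor that cannot meet $U^\circ$ and cannot be contained in the codimension $\ge 2$ set $Z$, hence is empty; dually one also extends $\Omega^{-1}$ as a section of $\omega_U^{-1}$. Once this is in hand, both parts are formal consequences of Definition \ref{cydef} and the compactification-independence recorded there. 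I do not expect to need the minimal model program or the dlt formalism for this lemma.
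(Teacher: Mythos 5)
Your proof is correct and follows essentially the same route as the paper's: compare $H^0(Y,\omega_Y(D)^{\otimes m})$ with $H^0(Y,\omega_Y(D_U)^{\otimes m})$ inside a common compactification $U\subseteq V\subseteq Y$, and deduce (2) from (1) together with the Hartogs-type extension of the volume form (and its inverse) across a codimension-two set. One small slip to fix: in the final equivalence of part (1), the condition ``no zeros, extends to a volume form at the generic point'' must be imposed along the divisorial components of $V\setminus U$ (i.e.\ of $D_U\setminus D$, which lie inside $V$), not along the components of $D=Y\setminus V$, where $\Omega$ is supposed to keep its simple poles.
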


\begin{proof} 
For (1), if $V$ is log CY, then clearly its volume form restricts 
to a scalar multiple of the volume
form on $U$. Now suppose $U$ is log CY, and its volume form $\Omega$ extends to
a volume form on $V$. We have $U\subseteq V\subseteq Y$ where $Y$
is a compactification of both $U$ and $V$. Thus $\Omega$ (and its powers) 
obviously has at worst simple poles on any divisor contained in
$Y\setminus V$, and it is unique in this respect, since we have
the same properties for $\Omega$ as a volume form on $U$.
Next (2) follows from (1), passing to the open
subsets where the map is an isomorphism, noting that in (1), when the
complement of $U$ has codimension at least two, the extension condition is 
automatic.
\end{proof} 

\begin{definition} We say a log CY $U$ has \emph{maximal boundary} if it has
a minimal model $(Y,D)$ with a zero-dimensional log canonical center.
For example, this is the case
if $(Y,D)$ is a minimal model for $U$ such that 
$D$ is simple normal crossings and contains a zero-dimensional stratum, i.e.,
a point which is the intersection of 
$\dim(Y)$ distinct irreducible components
of $D$.
\end{definition}

\begin{example} \label{dbex} Consider the group $G = \PGL_{n}$. 
There are the $2n-1$ minors of an $n\times n$ matrix given by the square
submatrices in the upper right corner or the lower left corner.
For example, for $n=3$ these are the $4$ minors
 \[
a_{1,3}, a_{3,1}, 
\left| \begin{matrix}
a_{1,2} & a_{1,3} \\
a_{2,2} & a_{2,3} \\
\end{matrix} \right|, 
\left| 
\begin{matrix}
a_{2,1} & a_{2,2} \\
a_{3,1} & a_{3,2} \\
\end{matrix} \right |
\]
and the determinant of the $3\times 3$ matrix itself. 
Let $D \subset Y = \bP(\Mat_{n \times n})= \bP^{n^2-1}$
be the union of the $2n-1$ divisors given by the zero locus of these minors. 
Note the total degree of $D$ is
\[
1 + 2 + \dots + (n-1) + 1 + 2 + \dots + (n-1) + n = n^2,
\]
so $D \in |-K_Y|$. With some non-trivial effort, 
one can check $(Y,D)$ is dlt, with a 
zero-dimensional log canonical center,  and thus
$(Y,D)$ is a minimal model for the smooth affine log CY with maximal boundary
$U \subset G$, the non-vanishing locus of this collection of minors. 
$U$ is by definition the open double Bruhat cell in $G$.
\end{example}

A log CY $U$ 
with maximal boundary will (in dimension at least two) always have infinitely many minimal models.
The set of possibilities 
leads to a fundamental invariant: 

\begin{definition} Let $(U,\Omega)$ be a log CY. Define
\begin{equation} \label{uteq}
\begin{aligned}
U^{\trop}(\bZ) &:= \{\text{divisorial discrete valuations } 
v: k(U) \setminus \{0\} \to \bZ\,|\, v(\Omega) < 0 \}  \cup \{0\} \\
             &:=  \{(E,m)\,|\, m \in \bZ_+, E \subset 
           (Y \setminus U),\, \Omega \text{ has a pole along } E \} \cup \{0\}.
\end{aligned}
\end{equation}
Here $k(U)$ is the field of rational functions of $U$, 
a discrete valuation is called divisorial if
it is given by the order of vanishing of a divisor 
on some variety birational to $U$. Furthermore,
we define 
\[
v(g dz_1\wedge\cdots\wedge dz_n):=v(g)
\]
for $z_1,\ldots,z_n$ local
coordinates in a neighborhood of the generic point of the divisor corresponding
to $v$; this is independent of the choice of coordinates as a change
of coordinates only changes $g$ by a unit.
In the second expression $E$ is a divisorial irreducible component of the boundary in 
some partial compactification $U \subset Y$, and 
two divisors on two possibly different birational varieties are
identified if they give the same valuation on their common field of fractions.
\end{definition}

The simplest example of a log CY with maximal boundary is an algebraic torus 
\[
T_N:= N\otimes \bG_m,
\]
for $N = \bZ^n$. Note $H^0(T_N,\cO_{T_N}) = \kk[M]$, where $M=\Hom(N,\ZZ)$
is the character lattice of $T_N$.

\begin{lemma} \label{evallem} Restriction of valuations to the character lattice $M$ induces a 
canonical isomorphism 
\[
T_N^{\trop}(\bZ) = N.
\]
A minimal model for $T_N$ is the same as a complete $T_N$-equivariant 
toric compactification. 
\end{lemma}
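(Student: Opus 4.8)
The plan is to base everything on one fact: the divisorial valuations $v$ on $k(T_N)$ with $v(\Omega)<0$ are exactly the monomial ones. Here $\Omega=\frac{dz_1}{z_1}\wedge\cdots\wedge\frac{dz_n}{z_n}$ is the translation‑invariant form and, for $n\in N$, $v_n$ denotes the monomial valuation $v_n(\sum_m c_m z^m)=\min\{\langle n,m\rangle:c_m\neq0\}$. I would prove $T_N^{\trop}(\bZ)=N$ as follows. Send $0\mapsto0$ and a pair $(E,m)$ to $(m'\mapsto m\,v_E(z^{m'}))$, which lies in $\Hom(M,\bZ)=N$ since $m'\mapsto v_E(z^{m'})$ is additive. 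For surjectivity, given $0\neq n=m_0n_0$ with $n_0$ primitive, choose by toric resolution a smooth complete fan $\Sigma$ having $\bR_{\geq0}n_0$ as a ray; since $\div_{X_\Sigma}(\Omega)=-\partial X_\Sigma$, the toric divisor $D_{n_0}$ has $v_{D_{n_0}}(\Omega)=-1<0$, so $(D_{n_0},m_0)$ is a valid point and maps to $(m'\mapsto m_0\langle n_0,m'\rangle)=n$. Injectivity I would reduce to the claim that a prime divisor $E$ (on a normal model of $T_N$) with $v_E(\Omega)<0$ satisfies $v_E=v_{n_E}$ with $n_E:=(m'\mapsto v_E(z^{m'}))$ primitive: then two points $(E_1,m_1),(E_2,m_2)$ mapping to the same $n\neq0$ have $m_jn_{E_j}=n$ with each $n_{E_j}$ the primitive vector along $n$, so $m_1=m_2$ and $v_{E_1}=v_{E_2}$, i.e.\ the same point of $T_N^{\trop}(\bZ)$; compatibility of the bijection with the scalings $(E,m)\mapsto(E,km)$ and $n\mapsto kn$ is then clear.

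For the claim I would argue: fix a smooth complete $T_N$‑toric $X_\Sigma$ and let $Z$ be the center of $v_E$ on $X_\Sigma$. If $Z\not\subseteq\partial X_\Sigma$, then $Z\cap T_N$ is the (necessarily proper) center of $v_E$ on the smooth variety $T_N$, of some codimension $c\geq1$; since $\Omega$ is a nowhere‑zero section of $\omega_{T_N}$ there, the discrepancy inequality (\cite{KM98}) gives $v_E(\Omega)\geq c-1\geq0$, a contradiction. So $Z\subseteq\partial X_\Sigma$. Now $(X_\Sigma,\partial X_\Sigma)$ is an SNC, hence log canonical, pair with $\Omega$ as a log Calabi--Yau form, so $v_E(\Omega)\geq-1$; as $v_E$ has value group $\bZ$ and $v_E(\Omega)<0$, in fact $v_E(\Omega)=-1$, i.e.\ $E$ is a log canonical place of $(X_\Sigma,\partial X_\Sigma)$. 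Finally, the log canonical places of an SNC pair are exactly the monomial valuations in the boundary branches through the center; toric‑theoretically this says $v_E=v_{n'}$ for some $n'\in N$, and value group $\bZ$ forces $n'$ primitive, with evidently $n'=n_E$.

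For the second statement, one direction is immediate: for a complete $T_N$‑toric $X_\Sigma$, $K_{X_\Sigma}+\partial X_\Sigma$ is the (zero) divisor of $\Omega$, hence trivial and Cartier, and the pair is dlt (take $\Sigma$ smooth, or read ``minimal model'' with ``lc'' in place of ``dlt''), so the Lemma above on dlt pairs with $K+D$ trivial exhibits $T_N=X_\Sigma\setminus\partial X_\Sigma$ as a log CY with minimal model $(X_\Sigma,\partial X_\Sigma)$. Conversely, for a minimal model $(Y,D)$ of $T_N$: the boundary prime divisors $E_1,\dots,E_r$ have $v_{E_i}(\Omega)=-1$, so by the claim $v_{E_i}=v_{n_i}$ with $n_i\in N$ distinct and primitive. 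Set $\sigma(y):=\operatorname{Cone}(n_i:y\in E_i)$ for $y\in Y$. The only prime divisors of $Y$ missing $T_N$ are the $E_i$, and each $z^{m'}$ is a unit along every prime divisor meeting $T_N$, so normality of $Y$ (so $\cO_{Y,y}=\bigcap_{E\ni y}\cO_{Y,\eta_E}$) gives $\{m'\in M:z^{m'}\in\cO_{Y,y}\}=\sigma(y)^\vee\cap M$. Hence $Y$ is covered by the opens $Y_\sigma=\{y:\sigma(y)\prec\sigma\}$, which map to the affine toric charts $U_\sigma=\Spec\kk[\sigma^\vee\cap M]$; using the SNC structure of $(Y,D)$ these maps are open immersions gluing $Y$ to $X_\Sigma$ for the fan $\Sigma=\{\sigma(y)\}$ and its faces, with $D=\partial X_\Sigma$, and completeness of $Y$ forces $\Sigma$ complete.

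I expect the main obstacle to be the equality case of the log canonical bound in the claim — that $v_E(\Omega)=-1$ forces $v_E$ monomial. This is the only genuinely non‑formal input; a self‑contained toric argument reduces, after a toric modification making the center of $v_E$ torus‑invariant, to a direct computation of $v_E(\Omega)$. A secondary technical point is the final gluing step in the converse of the second statement: verifying that the charts $Y_\sigma\to U_\sigma$ really are isomorphisms onto open subsets and patch — this is exactly where the dlt/SNC hypothesis on the minimal model, rather than mere normality, is used.
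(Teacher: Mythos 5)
Your proposal is correct and follows essentially the same route as the paper, whose entire proof is the one-line citation of the log-discrepancy computation (\cite{KM98}, Lemmas 2.29 and 2.45) that you carry out in detail: divisorial valuations with $v(\Omega)<0$ are exactly the positive multiples of monomial valuations, from which both halves of the lemma follow. The only step you use silently is that on a minimal model $(Y,D)$ the section trivializing $K_Y+D$ restricts on $T_N$ to a scalar multiple of $\Omega$ (this follows from the paper's first Lemma together with the uniqueness of the log Calabi--Yau volume form), which is what justifies your unproved assertion that $v_{E_i}(\Omega)=-1$ for every component $E_i$ of $D$ in the converse direction.
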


\begin{proof} This is an easy log discrepancy computation, using
e.g., \cite{KM98}, Lemmas 2.29 and 2.45.
\end{proof}

Thus $U^{\trop}(\bZ)$ gives an analog for any log CY of the cocharacter 
lattice of a torus. Note however that
in general $U^{\trop}(\bZ)$ is not a group as addition does not make sense.
We conjecture there is also an analog of the character lattice, or equivalently,
the dual torus:
\begin{conjecture}\cite{GHK11}
\label{conjGHKI} Let $(Y,D)$ be a simple normal crossings minimal model for
a log CY with maximal boundary $U = Y \setminus D$, and assume $D$ supports an ample
divisor (note this implies $U$ is affine). Let $R = \kk[\Pic(Y)^*]$. 
The free $R$-module 
$V$ with basis $U^{\trop}(\bZ)$ 
has a natural finitely generated $R$-algebra structure whose
structure constants are non-negative integers 
determined by counts of rational curves on $U$. The associated fibration 
$p:\Spec(V) \to \Spec(R) = T_{\Pic(Y)}$ is a flat family of affine log CYs with maximal boundary. 
Letting $K$ be the kernel of the natural surjection
$\Pic(Y) \twoheadrightarrow \Pic(U)$, $p$ is $T_K$-equivariant. The quotient family
$\Spec(V)/T_K \to T_{\Pic(U)}$ depends only on $U$ (is independent of the choice of minimal model), and is
the mirror family to $U$. 
\end{conjecture}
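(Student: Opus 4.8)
The plan is not to guess the structure constants of $V$ but to construct them, together with a canonical $R$-basis indexed by $U^{\trop}(\bZ)$, from a single combinatorial object: a \emph{scattering diagram} $\foD$ attached to $(Y,D)$, that is, a locally finite collection of walls in $U^{\trop}(\bR)$ decorated with wall-crossing automorphisms whose coefficients are relative Gromov--Witten invariants counting rational curves on $(Y,D)$, in the spirit of \cite{GHK11}. The base case is Lemma \ref{evallem}: for $U=T_N$ the diagram is empty, the index set is $N$, and $V=R\otimes_{\kk}\kk[N]$ is the constant family whose fibre is the dual torus $T_M$, $M=\Hom(N,\bZ)$ --- the expected toric mirror. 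In general I would first use the blow-up description of log CY varieties with maximal boundary to fix a dense torus chart $T_N\subset U$ (so $N$ sits inside $U^{\trop}(\bZ)$), in which the initial walls of $\foD$ are read off directly from the blow-up centers; $\foD$ itself is then obtained from those initial walls by the Kontsevich--Soibelman path-ordered product completion, which forces consistency.

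Given $\foD$, for each $q\in U^{\trop}(\bZ)$ I would define the theta function $\vartheta_q$ as the sum over broken lines with asymptotic direction $q$ and set $V:=\bigoplus_{q\in U^{\trop}(\bZ)} R\,\vartheta_q$; by construction this is a free $R$-module with the prescribed basis, hence $R$-flat. Products of theta functions are again computed by broken lines, $\vartheta_{q_1}\cdot\vartheta_{q_2}=\sum_r \alpha(q_1,q_2,r)\,\vartheta_r$, where $\alpha(q_1,q_2,r)$ counts pairs of broken lines of classes $q_1,q_2$ meeting at a chosen general point, so --- granted positivity of the wall functions --- it is a non-negative integer, and since the wall functions are by construction generated by relative Gromov--Witten invariants of $(Y,D)$, each $\alpha(q_1,q_2,r)$ is a virtual count of rational curves on $U$. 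Associativity is automatic from consistency of $\foD$. Finite generation of $V$ over $R$ is where the hypothesis that $D$ supports an ample divisor is used: it produces a finite fan structure on $U^{\trop}(\bR)$ whose ray generators give algebra generators of $V$ (and also forces $U$ to be affine).

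It remains to understand the family $p\colon\Spec V\to\Spec R=T_{\Pic(Y)}$. Flatness is the $R$-freeness just noted; that each fibre is again an affine log CY with maximal boundary follows by specialising $\foD$ (walls remain walls as the parameters in $R=\kk[\Pic(Y)^*]$ are specialised) and identifying the fibre ring with the corresponding theta ring. For $T_K$-equivariance, the curve classes appearing in the wall functions equip $V$ with a $\Pic(Y)$-grading compatible with the $R$-action; since $K=\ker(\Pic(Y)\twoheadrightarrow\Pic(U))$ is spanned by boundary classes, passing to the $T_K$-quotient exactly removes the dependence on the boundary, and the resulting family over $T_{\Pic(U)}$ is assembled from $U^{\trop}(\bZ)$, broken lines, and curve counts on $U$ alone, hence does not depend on the chosen minimal model. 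The crux throughout --- and the reason this is stated as a conjecture --- is positivity and integrality of $\foD$: one must show that the path-ordered products assemble into wall functions with non-negative integer coefficients and that those coefficients are honest virtual counts of rational curves, rather than merely a formal solution of the consistency equations. For toric models and cluster seeds this is accessible via the tropical vertex group and an induction on the order of vanishing of a wall function; in the general log CY setting it requires the full apparatus of logarithmic/intrinsic mirror symmetry. A genuine secondary obstacle is finite generation itself --- the non-finitely-generated Cox rings exhibited later in this paper show that it fails without the ampleness hypothesis, so that hypothesis must enter essentially and not merely for bookkeeping.
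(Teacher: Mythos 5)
You should be aware that the statement you are proving is Conjecture \ref{conjGHKI}: the paper states it as a conjecture, attributes it to \cite{GHK11}, and uses it only as motivation in \S\ref{motivationsection}; no proof is given anywhere in the paper (the authors only say that many instances of the related Conjecture \ref{FGconj} will be proven in \cite{GHKK}). So there is no argument of the paper to compare yours against, and a complete proof is not expected here.

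Judged on its own terms, your proposal is a correct description of the intended strategy (fix a toric model, build a scattering diagram on $U^{\trop}(\RR)$ from blow-up data and relative Gromov--Witten invariants, define $\vartheta_q$ by broken lines, multiply theta functions by counting pairs of broken lines, use ampleness of a divisor supported on $D$ for finite generation), and you correctly identify where it breaks down as a proof. But be explicit that every one of the deferred steps is the actual content of the conjecture, not a technicality: consistency of the completed diagram together with positivity and integrality of the wall functions, and their identification with genuine virtual counts of rational curves on $U$, is only established in dimension two in \cite{GHK11}; finite generation under the ampleness hypothesis, flatness with log CY fibres of maximal boundary, the $\Pic(Y)$-grading giving $T_K$-equivariance, and independence of the quotient family from the choice of minimal model all require arguments you have not supplied (the last is particularly delicate, since $U^{\trop}(\ZZ)$ carries no linear structure and different minimal models give different piecewise-linear identifications). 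In addition, your assertion that the ampleness hypothesis ``produces a finite fan structure on $U^{\trop}(\RR)$ whose ray generators give algebra generators'' is itself unproven and is essentially where the hard geometry sits, as the non-finitely-generated examples of \S\ref{nfgsec} indicate. So what you have is a faithful programme for the conjecture, consistent with \cite{GHK11} and the announced \cite{GHKK}, rather than a proof; presenting it as such would be accurate.
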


\begin{remark}  An analog of Conjecture \ref{conjGHKI} is
expected for compact Calabi-Yaus, but perhaps only with formal (e.g., Novikov) 
parameters,
and for Calabi-Yaus near the so-called large complex
structure limit. This will be discussed in forthcoming work.
The maximal boundary condition means the boundary is highly degenerate 
--- we are thus already in some sense
in the large complex structure limit,  and so one can hope that 
no formal power series or further limits are required. 
This is one reason to focus on this case. The other is the wealth of fundamental
examples. 

The conjecture is of interest independently of mirror symmetry: in many 
instances the
variety $U$ and its prospective mirror are known varieties of compelling 
interest. The
conjecture then gives a new construction of a variety we already care about, 
a construction which in particular
endows the mirror (and each fibre of the family) with a canonical basis of functions. In any case 
mirror symmetry is conjecturally an involution, the mirror
of the mirror being a family of deformations of the original $U$. Thus the conjecture says in particular
that any affine log CY with maximal boundary is a fibre of the output of such a 
construction, and thus in particular has a canonical basis of functions, 
$B_U$.
One then expects $B_U$ to be the tropical set of the conjectural mirror. 

We call a partial compactification $U \subset Y$ a \emph{partial
minimal model} if the volume form $\Omega$ has a pole on every irreducible divisorial
component of $Y \setminus U$. One checks using Lemma \ref{evallem} that a partial minimal
model for an algebraic torus is the same thing as a toric variety.
We further
conjecture that for any partial minimal model (not necessarily affine) of 
an affine log CY $U$ with maximal boundary, 
$B_U \cap H^0(Y,\cO_Y) \subset H^0(Y,\cO_Y)$ is a basis of regular functions on $Y$. For example
we conjecture that the open double Bruhat cell $U \subset G$ has a canonical basis of functions,
and that the subset of basis elements which extend regularly to $G$ give a basis of functions on
$G$. 
\end{remark} 

After tori, the next simplest example of a log CY with maximal boundary is 
obtained by  gluing together algebraic tori in such a way that the volume forms patch.
More precisely, suppose that
\[
\cA = \bigcup_{\bfs \in \shS} T_{N,\bfs}
\]
is a variety covered by open 
copies of the torus $T_N$ indexed by the set $\shS$. 
This gives canonical 
birational maps $\mu_{\bfs,\bfs'}:T_{N,\bfs} \dasharrow T_{N,\bfs'}$ for each
pair of {\it seeds} $\bfs,\bfs' \in \shS$. Then $\cA$ will be log CY if and
only if
each birational map is a \emph{mutation}, i.e., preserves the volume
form: $\mu^*(\Omega) = \Omega$. In this case each choice of seed torus
$T_{N,\bfs} \subset \cA$ 
gives a canonical identification $\cA^{\trop}(\bZ) = 
T^{\trop}_{N,\bfs}(\bZ) = N$. 

We can reverse the procedure. Beginning with a collection of such mutations 
satisfying the cocycle condition,  we can 
canonically glue together the tori along the maximal open sets where the maps
are isomorphisms to form a log CY $\cA$. See Proposition 
\ref{gluingconstruction} for details.  
The simplest example
of a mutation comes from a pair $(n,m) \in N \times M$ with $\langle n,m
\rangle :=m(n) = 0$. It is defined by 
\begin{equation} \label{of}
\mu_{(n,m)}^*(z^{m'}) = z^{m'} \cdot (1 + z^m)^{\langle m',n\rangle}
\end{equation} 
where $z^{m'}, z^m \in \kk[M]$ are the corresponding characters of $T_N$. 
Cluster varieties
are log CYs formed by gluing tori by mutations of this simple sort 
(and compositions of such) for a particular parameterizing set $\shS$. 
See \S \ref{clusterreviewsection} for details. 

Though these are the simplest non-toric log CYs, 
there are already very interesting
examples, including double Bruhat cells for reductive groups, their flag varieties and unipotent radicals,
and character varieties of punctured Riemann surfaces. See \cite{BFZ05} and 
\cite{FG06}.

Note that these simple
mutations come in obvious dual pairs --- we can simply reverse the order and
consider 
\begin{equation} \label{dequation}
(m,-n) \in \Hom(N \times M,\ZZ)=M\times N,
\end{equation}
so that $\mu_{(m,-n)}$ defines a birational automorphism of $T_M$.
Thus for each $\cA := \bigcup_{\bfs \in \shS} T_{N,\bfs}$
built from such maps, there is a canonical {\it dual} $\cX := 
\bigcup_{\bfs \in \shS} T_{M,\bfs}$,
just obtained by replacing each torus (and each mutation) by its dual.  
For the particular parameterizing 
set $\shS$ used in cluster varieties, Fock and Goncharov made the following
remarkable conjecture:

\begin{conjecture} \label{FGconj} $\cA^{\trop}(\bZ)$ parameterizes a canonical 
vector space basis
for $H^0(\cX,\cO_{\cX})$. The structure constants for the algebra 
$H^0(\cX,\cO_{\cX})$ expressed in this basis are non-negative integers.
\end{conjecture}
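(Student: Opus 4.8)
The plan is to produce the conjectural basis of $H^0(\cX,\cO_\cX)$ by a combinatorial device --- the \emph{theta functions} attached to a scattering diagram --- and then to verify the three assertions packed into the conjecture: that the basis is indexed by $\cA^{\trop}(\bZ)$, that it really is a vector-space basis, and that the structure constants lie in $\bZ_{\geq 0}$.

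First I would attach to the seed data defining $\cX$ a scattering diagram $\foD$ in $\cA^{\trop}(\RR)=N_\RR$, obtained by starting from the initial walls $(n^\perp,\,1+z^{n})$ indexed by the directions $n$ of the elementary mutations \eqref{of} and closing up under consistency in the usual way; this produces a diagram all of whose wall-crossing automorphisms are unipotent with integral coefficients. For $q\in\cA^{\trop}(\bZ)$ and a choice of seed $\bfs$ (hence a chart $T_{M,\bfs}\subset\cX$) one sets $\vartheta_{q,\bfs}\in\kk[M]$ equal to the sum over broken lines of asymptotic direction $q$ ending at a general basepoint of $T_{M,\bfs}$. The two structural facts to prove are: (i) the change-of-chart identity --- passing from $\bfs$ to $\bfs'$ carries $\vartheta_{q,\bfs}$ to $\vartheta_{q,\bfs'}$ under $\mu_{\bfs,\bfs'}^*$, which follows from consistency of $\foD$ and shows the $\vartheta_q$ glue to genuine \emph{regular} functions on $\cX$; and (ii) each $\vartheta_q$ has a distinguished leading term in the seed chart $\bfs$ (the monomial contributed by the constant broken line), whence the $\vartheta_q$ are $\kk$-linearly independent by a triangularity argument on tropicalizations. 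Regularity on $\cX$ is where the blowup picture of Theorem \ref{maingeometrictheorem} enters: it can be checked on the toric model, where the walls of $\foD$ are the tropical shadows of the exceptional loci of the elementary transformations, so that ``Laurent in every chart'' is visible geometrically.

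Next, to show the $\vartheta_q$ span $H^0(\cX,\cO_\cX)$: given a regular $f$, the Laurent phenomenon (Corollary \ref{lpcor}) makes $f$ a Laurent polynomial in each seed chart; extract the top term of $f$ with respect to a generic point of $\cA^{\trop}(\bZ)$, subtract the corresponding scalar multiple of a theta function, and induct on the Newton polytope, which only shrinks under this operation. Finiteness of the induction --- that $\{\ord_E f\}$ spans a bounded region and broken-line corrections never enlarge it --- is the point requiring real input. Finally, for positivity of structure constants, expand the product $\vartheta_p\cdot\vartheta_q$ in $T_{M,\bfs}$ as a sum over \emph{pairs} of broken lines of directions $p$ and $q$ sharing their endpoint; grouping by the direction of the concatenated path gives $\vartheta_p\vartheta_q=\sum_r c_{pq}^r\,\vartheta_r$ with each $c_{pq}^r$ a count of such configurations weighted by the (positive, integral) wall multiplicities, hence in $\bZ_{\geq0}$; seed-independence of the $\vartheta_r$ then identifies the $c_{pq}^r$ with the true structure constants.

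The main obstacle is the spanning statement, or rather its two-sidedness: one needs the $\vartheta_q$ to be neither too few nor too many. A priori there is no reason the broken-line sum defining $\vartheta_q$ is finite (so $\vartheta_q$ might be only a formal object, not a regular function), nor that the $\vartheta_q$ exhaust $H^0(\cX,\cO_\cX)$; pinning this down appears to need a ``large'' or full Fock-Goncharov hypothesis on the cluster structure, or a direct study of the support of $\foD$. In fact the geometry developed here indicates the obstruction is genuine: once the kernel form of Theorem \ref{symmetricform} is indefinite, the generic fibre of $\cX\to T_{K^*}$ carries no nonconstant functions, so $H^0(\cX,\cO_\cX)$ is vastly smaller than $\cA^{\trop}(\bZ)$ --- which is precisely the mechanism by which the conjecture fails in \S\ref{FGcounterexamplessection}. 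So I expect the realistic outcome of this plan to be not a proof of Conjecture \ref{FGconj} as stated, but a delineation of the hypotheses under which it holds, accompanied by examples showing those hypotheses cannot be dropped.
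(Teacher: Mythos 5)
You have correctly diagnosed the situation: the statement you were given is a conjecture, and this paper contains no proof of it. On the contrary, one of the paper's main points (announced in the introduction and carried out in \S\ref{FGcounterexamplessection}) is that Conjecture \ref{FGconj} as stated is \emph{frequently false}, by exactly the mechanism you identify in your final paragraph. In the rank-two setting of Construction \ref{toricblowupconstruction}, the generic fibre of $\lambda:\cX\to T_{K^*}$ is, up to codimension two, the interior $Y\setminus D$ of a Looijenga pair; when $D$ is analytically contractible, or a cycle of $(-2)$-curves with non-torsion normal class (equivalently, outside the negative definite case for the form of Theorem \ref{symmetricform}), that fibre has only constant regular functions, so $H^0(\cX,\cO_{\cX})=\kk[K]$ is pulled back from $T_{K^*}$ and there is nothing left for the points of $\cA^{\trop}(\bZ)\setminus K$ to parameterize. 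The paper also runs the dual argument: compatibility with the $T_K$-action would force all line bundles $\shL_m$ on a fibre to have spaces of sections indexed by an $N/K$-torsor, which fails, e.g., in the examples of Theorem \ref{nonfgtheorem}. So the correct ``answer'' here is a counterexample, not a proof, and your closing assessment matches the paper's.

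Your positive programme --- scattering diagrams, broken lines, theta functions, with regularity checked against the toric models of Theorem \ref{maingeometrictheorem} and spanning attacked via the Laurent phenomenon --- is not carried out in this paper at all; it is precisely the approach the authors defer to the companion work \cite{GHKK}, where many instances of the conjecture are proved under additional hypotheses (and note that even the ``extremal positive Laurent polynomial'' form of the conjecture was already disproved in \cite{LLZ13}). The genuine obstruction you flag (finiteness of the broken-line sums and two-sided spanning) is indeed where affineness-type hypotheses enter; the paper's Conjecture \ref{cfgconj} is its proposed formulation of the needed condition, and in the rank-two case it is equivalent to negative definiteness of the form of Theorem \ref{symmetricform}. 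So: no gap in your reasoning about the truth value, but be clear that nothing in your first two paragraphs can be completed into a proof of the statement as given, since the statement is false in general.
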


(Here we are treating the notationally simpler case of skew-symmetric cluster
varieties, the general case involving a Langlands dual seed.)

Note as stated $\cA$ and $\cX$ are on completely equal footing, so the conjecture
includes the analogous statement with the two reversed. 
Fock and Goncharov have a different definition of e.g., $\cA^{\trop}(\bZ)$,
which they denote $\cA(\bZ^t)$, as points of $\cA$ valued in the tropical semi-field. But it
is easy to check this agrees with our definition, which has the advantage that 
it makes
sense for any log CY, while theirs is restricted to varieties with a so-called
positive atlas of tori. 

In somewhat more detail, a skew-symmetric cluster variety is defined
using initial data of
a lattice $N$ with a skew-symmetric form $\{\cdot,\cdot\}:N\times N\rightarrow
\ZZ$, and each mutation
is given by the pair $(n,\{n,\cdot \})$ for some $n \in N$. When 
$\{\cdot,\cdot\}$ fails to be unimodular,
the dual $M$ does not have a skew-symmetric form, and in this case 
$\cA$ and $\cX$ are
on unequal footing. In this case $\cA$, by the Laurent phenomenon, 
always has lots of global functions, but $\cX$ may have very few.

Conjecture \ref{FGconj} was inspired by the case $\cA:= U \subset G$ of Example \ref{dbex}, 
which has a celebrated canonical basis of global functions 
constructed by G.\ Lusztig. See \cite{L90}.
Conjecture \ref{conjGHKI} suggests the existence of 
this basis may have nothing
a priori to do with representation theory, or cluster varieties, but is rather a general
feature of affine log CYs with maximal boundary. 

In \S\ref{FGcounterexamplessection}  
we show that Conjecture \ref{FGconj} as stated is often false. 
But if we add the condition
that $\cX$ is affine, it becomes a very special case of Conjecture 
\ref{conjGHKI}, and
for that reason we refer to $\cX, \cA$ as \emph{Fock-Goncharov mirrors}. 
In view of the highly involved existing proposals
for synthetic constructions of mirror varieties, \cite{KS06}, 
\cite{GSAnnals}, \cite{GHK11}, this simple alternative 
--- replace each torus in the open cover by its dual --- 
is an attractive surprise. 
We will prove many instances of Conjecture \ref{FGconj} in \cite{GHKK}. 

We now turn to the main idea in this paper, which connects the above
traditional description of cluster varieties via gluing tori to the
description we will develop in this paper, involving blowups of toric
varieties.  Here is some cluster motivation for the blowup approach. 
Each seed $\s$ gives
a torus open subset $T_{N,\s} \subset \cA$, together with $n$ cluster variables,
a basis of characters. These give a priori rational functions on $\cA$ and thus
a birational map $b:\cA \dasharrow \bA^n$, whose inverse restricts to an isomorphism
of the structure torus $\bG_m^n \subset \bA^n$ with $T_{N,\s} \subset \cA$. The Laurent
Phenomenon is equivalent to the statement that $b$ is regular, and thus in particular
suggests that each seed determines a construction of $\cA$ as (an open subset of) 
a blowup of a toric variety (in fact $\bA^n$) along a locus in the toric boundary. Stated this way,
it is natural to wonder if it holds for $\cX$ as well. We'll show this indeed holds
for $\cX$, and while it fails for general $\shA$, a slightly weaker
version is true which is still good enough for the Laurent Phenomenon.

Log CYs with maximal boundary are closed under blowup in the following sense:

\begin{lemma} Let $\bar U \subset \oY$ be a log CY open subset of a smooth 
(not necessarily
complete) variety, $\oD:= \oY \setminus \bar U$,
and $H \subset \oD\setminus\Sing(\oD)$ be a smooth codimension two 
(not necessarily irreducible) subvariety.
Let
$b:Y \to \oY$ be the blowup along $H$, 
$D \subset Y$ the strict transform
of $\oD$ and $U := Y \setminus D$. Then $U$ is log CY, with unique volume form the pullback
under $b$ of the volume form on $\bar U$. In
addition, $U$ has maximal boundary if $\bar U$ does.
\end{lemma}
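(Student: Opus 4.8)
The plan is to reduce the first assertion to part (1) of the preceding lemma and then carry out one local computation on the blow-up; the maximal boundary statement will follow by running the same computation on a minimal model of $\bar U$.

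\textbf{Reduction.} Since $H\subseteq\oD=\oY\setminus\bar U$, the morphism $b\colon Y\to\oY$ is an isomorphism over $\bar U$. Writing $E:=b^{-1}(H)$ for the exceptional divisor, $b$ restricts to an isomorphism $U\setminus E\simeq\bar U$, so $\bar U$ sits inside $U:=Y\setminus D$ as an open subset whose complement is the divisor $E\cap U$. By the previous lemma, part (1), $U$ is log CY precisely when the unique rational extension $b^{*}\Omega$ of $\Omega$ to $Y$ is regular and nowhere vanishing on $U$, and in that case $b^{*}\Omega$ is its volume form (unique up to scale). Hence it suffices to check that $b^{*}\Omega$ has neither zero nor pole along $E\cap U$; away from $E$ it is just $\Omega$ on $\bar U$, where there is nothing to verify.

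\textbf{The local computation.} As $H\subseteq\oD\setminus\Sing(\oD)$ is smooth of codimension two, near a point of $H$ the divisor $\oD$ is smooth, $H$ lies in a single component $\oD_{1}$ of it, and we may choose local coordinates $z_{1},\dots,z_{n}$ on $\oY$ with $\oD_{1}=\{z_{1}=0\}$, $\bar U=\{z_{1}\neq 0\}$ and $H=\{z_{1}=z_{2}=0\}$. The one place the log-CY hypothesis on $\bar U$ is used is that then $\Omega=u\,\frac{dz_{1}}{z_{1}}\wedge dz_{2}\wedge\cdots\wedge dz_{n}$ for a local unit $u$; that is, $\Omega$ has an honest simple pole along $\oD_{1}$, which is exactly what one should know about the boundary near $H$ (it holds whenever $\oY$ is a partial minimal model there, in particular in all the applications, where $\oD$ supports the full polar divisor of $\Omega$). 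Blowing up $H$: in the chart $(z_{1},w,z_{3},\dots,z_{n})\mapsto(z_{1},z_{1}w,z_{3},\dots,z_{n})$ one gets $b^{*}\Omega=(b^{*}u)\,dz_{1}\wedge dw\wedge dz_{3}\wedge\cdots\wedge dz_{n}$, regular and nonvanishing, with the exceptional divisor $\{z_{1}=0\}$ lying in $U$ and the strict transform $D$ absent from the chart; in the chart $(v,z_{2},\dots,z_{n})\mapsto(z_{2}v,z_{2},z_{3},\dots,z_{n})$ one gets $b^{*}\Omega=(b^{*}u)\,\frac{dv}{v}\wedge dz_{2}\wedge\cdots\wedge dz_{n}$, which has a simple pole precisely along $D=\{v=0\}$ (removed from $U$) and is regular and nonvanishing along the exceptional divisor inside $U$. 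Therefore $b^{*}\Omega$ is a volume form on $U$, with a simple pole exactly along $D$, mirroring the situation for $\Omega$ on $\oY$, and $U$ is log CY.

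\textbf{Maximal boundary.} Suppose $\bar U$ has maximal boundary, so it admits a simple normal crossings minimal model $(\bar Y,\bar D)$ with a zero-dimensional stratum $p$. Read with $K_{\bar Y}+\bar D\sim 0$, the computation above shows that blowing up $\bar Y$ along any smooth codimension two center contained in $\bar D\setminus\Sing(\bar D)$ and discarding the strict transform of $\bar D$ again produces a minimal model, and it leaves $p$ (which lies in $\Sing(\bar D)$, unlike the center) intact. Performing finitely many such blow-ups, we may pass to a minimal model of $\bar U$ in which the divisorial valuation of $\oD_{1}$ appears as a boundary component and $H$ appears as a smooth codimension two subvariety $\bar H\subseteq\bar D\setminus\Sing(\bar D)$ disjoint from a zero-dimensional stratum. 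Blowing up along $\bar H$ and deleting the strict transform of $\bar D$ yields a minimal model of a log CY $\tilde U$ possessing a zero-dimensional stratum, so $\tilde U$ has maximal boundary; since $\tilde U$ is obtained from $\bar U$ by the same construction as $U$ (with $\oY$ replaced by $\bar Y$), the varieties $\tilde U$ and $U$ agree away from codimension two, and hence $U$ has maximal boundary too.

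\textbf{Where the difficulty lies.} The crux is the local computation, and inside it the point that $\Omega$ has a genuine simple pole along the component of $\oD$ through $H$ — not merely a pole of order $\geq -1$, and not a zero. If $H$ were allowed to sit on a boundary divisor along which $\Omega$ is regular, the very same computation would give $b^{*}\Omega$ a zero along the exceptional divisor, and $U$ need not then be log CY; so the real content is that $H$ lies in the minimal-model part of the boundary. The maximal boundary step is comparatively formal, relying again on this computation, which is precisely what keeps minimal models minimal under blow-ups of boundary centers.
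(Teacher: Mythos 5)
Your main argument is essentially the paper's proof written out in local coordinates: the paper disposes of the first claim in one line via $K_Y=b^*K_{\oY}+E$ and $D=b^*\oD-E$, which is exactly the bookkeeping your two blowup charts perform, and your reduction to part (1) of the preceding lemma is the natural way to organize it. The computation itself is correct. The gap is the step where you assert that log CY-ness of $\bar U$ forces $\Omega=u\,\frac{dz_1}{z_1}\wedge dz_2\wedge\cdots\wedge dz_n$ with $u$ a unit near $H$. The log CY hypothesis only bounds the pole order of $\Omega$ along $\oD_1$ above by one; it does not rule out $\Omega$ being regular, or even vanishing, along $\oD_1$. For instance, let $\oY$ be the blowup of $\AA^2$ at $(1,1)$, $\oD$ the strict transform of $\{xy=0\}$ together with the exceptional curve $F$, so that $\bar U\cong\Gm^2\setminus\{(1,1)\}$ is log CY with $\Omega=\frac{dx\wedge dy}{xy}$, which vanishes to order one along $F$; taking $H$ to be a general point of $F$ (a smooth point of $\oD$), the pullback $b^*\Omega$ vanishes along the part of the exceptional divisor lying in $U$, and $U$ is not log CY --- exactly the failure your own final paragraph anticipates. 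So the simple-pole property is an additional hypothesis, not a consequence of the stated ones, and your write-up is internally inconsistent on this point (first deriving it from the log CY hypothesis, then conceding it is extra content). To be fair, the paper's one-line proof makes the same tacit assumption: writing ``$K_Y+D=0$'' presupposes $K_{\oY}+\oD=0$, i.e. $\operatorname{div}\Omega=-\oD$ near $H$, which holds in all the paper's applications (toric models, partial minimal models). But as a proof of the lemma as literally stated your step is a gap, and the honest fix is to state the needed hypothesis (a pole of $\Omega$ along every component of $\oD$ meeting $H$) explicitly rather than attribute it to log CY-ness.

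The maximal-boundary paragraph is also not solid. The paper offers no argument for that clause, and yours does not yet supply one: the intermediate blow-ups you describe (blow up a center in $\bar D\setminus\Sing(\bar D)$ and discard the strict transform of the boundary) produce minimal models of strictly \emph{larger} log CYs, not of $\bar U$, so they cannot be used to ``pass to a minimal model of $\bar U$'' in which the valuation of $\oD_1$ appears as a boundary divisor. Extracting that valuation crepantly while retaining a projective dlt model with trivial log canonical class and a zero-dimensional log canonical center, and arranging the closure of $H$ to be a smooth center in the snc locus disjoint from that center, is precisely what needs proof (it requires MMP or weak-factorization input) and is asserted rather than shown. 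In the intended applications this is immediate --- for a toric model one completes the fan and the zero-dimensional toric strata away from $H$ furnish the required log canonical centers --- and that is the level of generality at which this clause of the lemma is actually used.
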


\begin{proof} If $E$ is the exceptional divisor of $b$, then it
is standard that $K_Y=b^* K_{\oY}+E$ (using $H$ codimension two) and
that $D=b^*\oD-E$. Thus $K_Y+D=0$.
\end{proof}

Now starting with the simplest example, an algebraic torus, we get lots of examples via:

\begin{definition} \label{ccydef} Continuing with notation as in the lemma, 
we say that $U=Y\setminus D$ is a \emph{cluster log  CY} and 
$b:(Y,D) \to (\oY,\oD)$ a 
\emph{toric model} for $U$ if 
\begin{enumerate}
\item $(\oY,\oD)$ is toric and the fan for
$\oY$ consists only of one-dimensional cones 
$\RR_{\ge 0}v_i$ for $v_i\in N$ primitive, with $T_N$ the structure torus
of $\oY$. (Equivalently, the boundary $D$ is a disjoint
union of codimension one tori). 
\item 
The connected components of $H$ are the subtori $z^{w_i} + 1 = 0 
\subset T_{N/\bZ \cdot v_i}$ for
some $w_i \in (N/\bZ \cdot v_i)^* = v_i^{\perp} \subset M$. 
\end{enumerate}
\end{definition}

As the name suggests, the log CYs obtained by this simple blowup construction
and those
obtained in the previous discussion as tori glued in the simplest way are 
frequently the 
same. Note the toric model determines a canonical torus open subset 
\[
T_N = \oY \setminus \oD \subset Y \setminus D = U.
\]
Remarkably there are (usually infinitely) many other torus open sets. 
Given a toric model for $U$, and a choice of a center, i.e.,
a connected component of
$H$, or equivalently, a choice of one of the primitive lattice points $v = v_k$, there
is a natural {\it mutation} which produces a new log CY $U'$, 
with a birational
map $U \dasharrow U'$. Under certain conditions, this map
will be an isomorphism outside of codimension two subset (of
domain and range). In these nice situations, this produces, 
up to codimension two, a second copy 
of $T_N$ living in $U$. Iterating the procedure produces an atlas of torus open sets.    
Here is a sketch; full details are given in \S \ref{gcvsec}.

The connection with mutation of seeds comes via the tropical set.
Note a mutation
$\mu: U \dasharrow V$ between log CY varieties 
canonically induces an isomorphism of tropical sets
\[
\mu^t: U^{\trop}(\ZZ) \to V^{\trop}(\ZZ),\quad  v \to v \circ \mu^*.
\]
For the mutation $\mu_{(n,m)}: T_N \dasharrow T_N$ of Equation \eqref{of}, 
one computes
\begin{equation} \label{mof}
\mu^t_{(n,m)}: N = T_N^{\trop}(\ZZ) \to T_N^{\trop}(\ZZ) 
= N, \quad \mu^t(n') = n' + 
[\langle m,n'\rangle]_- n
\end{equation}
where for a real number $r$, $[r]_- := \min(r,0)$.
This illustrates the general
fact that $\mu^t$ is piecewise linear but not linear (unless $\mu$ 
is an isomorphism). 
This explains the geometric origin of piecewise linear maps in the cluster 
theory (and tropical geometry, see \cite{HKT09}, \S 2).
Here we view $U^{\trop}(\ZZ)$ as a collection of valuations. If we think of 
elements of $U^{\trop}(\ZZ)$ as boundary
divisors with integer weight, as in the second formula in equation \eqref{uteq},
$\mu^t$ is simply strict transform (also called pushforward) for the 
birational map $\mu$.  

Now we explain how to mutate from one toric model of a cluster log CY to another.
Continuing with the situation of Definition \ref{ccydef},
we choose one index, $k$, and let $v = v_k$, with corresponding divisor $D_k$.
The center $H_k=H\cap D_k$ 
determines what is known as an elementary 
transformation in algebraic geometry. We explain this in a simplified, but
key, situation. 

Let $\Sigma_v$ be the fan, with two rays, 
with support $\RR v$, so that the corresponding toric variety
$X_{\Sigma_v}\cong T_{N/\ZZ v}\times\PP^1$, with $\pi:X_{\Sigma_v}
\rightarrow T_{N/\ZZ v}$ the projection. Write $D_{\pm}$ for the two
toric divisors corresponding to the rays generated by $\pm v$. Viewing
$X_{\Sigma_v}\setminus D_-$ as an open subset of $\oY$, the center $H_k$
is identified with a codimension two subscheme $H_+\subset D_+\subset
X_{\Sigma_v}$. Let $H_-=\pi^{-1}(\pi(H_+))\cap D_-$.

There is then a birational map  $\mu: X_{\Sigma_v} \dasharrow X_{\Sigma_v}$
obtained by blowing up $H_+$ and then
blowing down the strict transform of $\pi^{-1}(\pi(H_+))$. One checks that 
$\mu$ is described
by Equation \eqref{of}. Clearly by construction $\mu$ is 
resolved by the blowup $b:Y' \to X_{\Sigma_v}$ along $H_+$, and one can check
that
$\mu \circ b: Y' \to X_{\Sigma_v}$ is regular as well, being the blowup
along $H_-$, see Lemma \ref{elemtransformlemma}.

This description of the elementary transformation extends to give
birational maps between closely related toric models. 
For simplicity assume $-v \neq v_i$
for any $i$ (in \S \ref{gcvsec} we consider the general case). 
Now let $\Sigma_+$ be the
fan consisting of rays $\RR_{\geq 0} v_i$ together with $-\RR_{\geq 0} v$. The
toric model gives us a blowup $b : Y \to X_{\Sigma_+}$. (This is a slight
abuse of notation, because we added one ray, $-\RR_{\geq 0}v$. 
But note we do not blow up
along the new boundary divisor $D_- \subset X_{\Sigma_+}$, and in forming
$U$ we throw away
the strict transform of boundary divisors, so adding this ray does not
change $U$ at all). Let $\Sigma_-$ be the fan with rays 
$\RR_{\geq 0} \mu^t(v_i)$ together with $-\RR_{\geq 0} v=-\RR_{\ge 0} \mu^t(v)$.
In \S \ref{codimtwosection} we show that in good situations,
\[
b' := \mu\circ b: Y \to X_{\Sigma_{-}}
\]
is regular off a codimension two subset and give formulae for the centers, 
which again are of the cluster log CY sort. 
Thus the
elementary transformation induces a new toric model for $U$ (up to changes
in codimension two), and in particular a second torus open subset of $U$.
This recovers the standard definition of mutations for cluster algebras \cite{FZ02a}. 
From this perspective, each seed is interpreted as the data for a toric model
of the 
same (up to codimension two) cluster log CY. Note in the {\it mutated} toric model
$b': Y \to X_{\Sigma_{-}}$ there is now a center in the boundary divisor $D_-$, but
no center in $D_+$. In the original model $b:Y \to X_{\Sigma_+}$ there is a
center in $D_+ \subset X_{\Sigma_+}$ (this divisor is the strict transform
of $D_+ \subset X_{\Sigma_{-}}$) but no center in  $D_- \subset X_{\Sigma_+}$. 
For all the other boundary divisors there is a center in either model. This 
difference between the chosen index $k$ and the other indices accounts for
the peculiar sign change in the formula for seed mutation, see 
Equation \eqref{etransform}.

Unfortunately, this procedure does not always give a precise identification
between the picture of cluster varieties as obtained from gluing of tori
and the picture given by blowups of toric varieties. The reason is that
$b'$ above need not always be regular off a codimension two subset. It
turns out that this works in certain cases, including all $\cX$ cluster
varieties and \emph{principal} $\cA$ cluster varieties. See \S 2 for
review of the definitions of the latter, and \S 3 for further details.

\begin{remark} There is no need to restrict to the special centers 
of  Definition \ref{ccydef}, (2): one can consider the blowup of an
arbitrary hypersurface in each boundary divisor. An elementary transform
gives a mutation of a toric model in the same way, but the formulae for
how the centers change are more complicated. 
For a general center, we checked one obtains
the mutation formulae of \cite{LP12}. In this note 
we restrict our treatment to the cluster variety case, 
as it is simpler and sufficient for our applications.
\end{remark}

There are lots of formulae in the Fomin-Zelevinsky, Fock-Goncharov definitions
of cluster algebras, which we reproduce in the next section. But we note that 
only one, Equation \eqref{of}, is essential. This is
the birational mutation, $\mu$, between tori in the $\cA$-atlas. Its canonical
dual, arising from Equation \eqref{dequation}, 
gives the mutation for the Fock-Goncharov mirror, see Equations  \eqref{Amutationeq}
and \eqref{Xmutationeq} below. 
The formula for the change of seed, Equation \eqref{etransform}, comes 
from the tropicalisation, $\mu^t$, of the 
birational mutation, Equation \eqref{mof}. Note in Equation \eqref{etransform},
$e_i' = \mu^t(e_i)$ for $i \neq k$, $e_k' = \mu^t(-e_k) = -e_k$. This is the
peculiar {\it sign change} explained above. 

\section{Review of the $\cX$ and $\cA$ cluster varieties}
\label{clusterreviewsection}

We follow \cite{FG09}, with minor modifications.
We will fix once and for all in the discussion the following data, which
we will refer to as \emph{fixed\footnote{This terminology is not standard
in the cluster literature. Rather, what we call fixed data along with seed
data is referred to as seed data in the literature. We prefer to distinguish
the data which remains unchanged under mutation from the data which changes.} 
data}:
\begin{itemize}
\item  A lattice $N$ with a skew-symmetric bilinear form
\[
\{\cdot,\cdot\}:N\times N\rightarrow \QQ.
\]
\item 
An \emph{unfrozen sublattice} $N_{\uf}\subseteq N$, a saturated
sublattice of $N$. If $N_{\uf}=N$, we say the fixed data has no frozen
variables.
\item An index set $I$ with $|I|=\rank N$ and a subset $I_{\uf}\subseteq I$
with $|I_{\uf}|=\rank N_{\uf}$.
\item Positive integers $d_i$ for $i\in I$ with greatest common divisor $1$.
\item A sublattice $N^{\circ}\subseteq N$ of finite index such that
$\{N_{\uf}, N^{\circ}\}\subseteq \ZZ$, $\{N,N_{\uf}
\cap N^{\circ}\}\subseteq\ZZ$.
\item $M=\Hom(N,\ZZ)$, $M^{\circ}=\Hom(N^{\circ},\ZZ)$.
\end{itemize}

Given this fixed data, \emph{seed data} for this fixed data
is a labelled collection of elements of $N$
\[
{\bf s}:=(e_i\,|\,i\in I)
\]
such that
$\{e_i\,|\,i\in I\}$ is a basis of $N$, 
$\{e_i \,|\, i\in I_{\uf}\}$ a basis
for $N_{\uf}$, and $\{d_ie_i\,|\,i\in I\}$
is a basis for $N^{\circ}$.

A choice of seed data $\s$ defines a new
(non-skew-symmetric) bilinear form on $N$ by
\begin{align*}
[\cdot,\cdot]_{\s}:N\times N\rightarrow & \QQ\\
[e_i,e_j]_{\s}= \epsilon_{ij}:= {} & \{e_i,e_j\}d_j
\end{align*}
Note that $\epsilon_{ij}\in\ZZ$ as long as we don't have $i,j\in I\setminus 
I_{\uf}$.
We note this bilinear form depends on the seed. We drop the subscript
$\s$ if it is obvious from context.

\begin{remark}
Suppose we specify a basis $e_i, i\in I$ for a lattice $N$, $I_{\uf}\subseteq I$,
positive
integers $d_i$, and a matrix $\epsilon_{ij}$ satisfying 
\[
d_i\epsilon_{ij}=-d_j\epsilon_{ji}
\]
and $\epsilon_{ij}\in\ZZ$ provided we don't have $i,j\in I\setminus I_{\uf}$.
This data determines the data $N$, $N_{\uf}$, $N^{\circ}$, $\{\cdot,\cdot\}$,
etc. It will turn out that
$\epsilon_{ij}$ for $i,j\in I\setminus I_{\uf}$ does not affect the
schemes we construct, and it is standard
in the literature to just consider rectangular matrices 
$(\epsilon_{ij})_{i\in I_{\uf}, j\in I}$.
We wish however to
emphasize that the fixed data does not depend on the particular
choice of seed.
\end{remark}

Given a seed $\s$, we obtain a dual basis $\{e_i^*\}$ for $M$,
and a basis $\{f_i\}$ of $M^{\circ}$ given by
\[
f_i=d_i^{-1}e_i^*.
\]
We use the notation
\[
\langle\cdot,\cdot\rangle:N\times M^{\circ}\rightarrow\QQ
\]
for the canonical pairing given by evaluation. We also write for $i\in I_{\uf}$
\[
v_i:=\{e_i,\cdot\}\in M^{\circ}.
\]
We have two natural maps defined by $\{\cdot,\cdot\}$:
\begin{align*} 
p_1^*:N_{\uf}\rightarrow M^{\circ}\quad\quad &\quad\quad\quad\quad p_2^*:N\rightarrow 
M^{\circ}/N_{\uf}^{\perp}\\
N_{\uf}\ni n\mapsto (N^{\circ}\ni n'\mapsto \{n,n'\})&\quad\quad N\ni n
\mapsto (N_{\uf}\cap N^\circ\ni n'\mapsto \{n,n'\})
\end{align*}
For the future, let us choose 
a map 
\begin{equation}
\label{pstardef}
p^*:N\rightarrow M^{\circ}
\end{equation}
such that, (a) $p^*|_{N_{\uf}}=p_1^*$ and
(b) the composed map $N\rightarrow M^{\circ}/N_{\uf}^{\perp}$ agrees with
$p_2^*$. Different choices\footnote{We
note that \cite{FG09} gives an incorrect definition when $N_{\uf}\not=N$,
as the formula $p^*(n)=\{n,\cdot\}$ may not give a result in $M^{\circ}$.} 
of $p^*$
differ by a choice of map $N/N_{\uf}\rightarrow N_{\uf}^{\perp}$.

Given seed data $\s$, we can associate two tori
\[
\hbox{$\shX_\s=T_M=\Spec\kk[N]$ and $\shA_\s= T_{N^{\circ}}=
\Spec\kk[M^\circ]$.}
\]
We write $X_1,\ldots,X_n$ as coordinates on
$\shX_\s$ corresponding to the basis vectors $e_1,\ldots,e_n$, i.e., $X_i=z^{e_i}$, and
similarly coordinates $A_1,\ldots,A_n$ corresponding to the basis vectors
$f_1,\ldots,f_n$, i.e., $A_i=z^{f_i}$.
The coordinates $X_i$, $A_i$ are called \emph{cluster variables}.
These coordinates give identifications 
\begin{equation}
\label{splitidentification}
\shX_\s\rightarrow \Gm^n, \quad \shA_\s\rightarrow\Gm^n.
\end{equation}
We write these two split tori as $(\Gm^n)_X$ and $(\Gm^n)_A$ in the 
$\cX$ and $\cA$ cases respectively.

\begin{remark}
\label{seedtoriremarks}
These tori come with the following structures:
\begin{enumerate}
\item Let $K=\ker p_2^*$. Then the inclusion $K\subseteq N$
induces a map $\shX_{\s}\rightarrow T_{K^*}=\Spec\kk[K]$.
Furthermore, the torus $T_{(N/N_{\uf})^*}=\Spec \kk[N/N_{\uf}]$ is
a subtorus of $\shX_{\s}$ and hence acts on $\shX_{\s}$.
\item Let $K^{\circ}=K\cap N^{\circ}$. Then
the inclusion $K^{\circ}\rightarrow N^\circ$ induces a map of tori
$T_{K^{\circ}}\rightarrow \shA_\s$.
This gives an action of $T_{K^{\circ}}$ on
$\shA_\s$. Furthermore, there is a natural inclusion $N_{\uf}^{\perp}
=\{m\in M^{\circ}\,|\, \langle m,n\rangle=0\quad
\forall n\in N_{\uf}\}\subseteq M^{\circ}$. This induces a map
$\shA_{\s}\rightarrow T_{N^{\circ}/N_{\uf}\cap N^{\circ}}=\Spec \kk[N_{\uf}^{\perp}]$.
\item 
The chosen map $p^*:N\rightarrow M^{\circ}$ defines a map
\[
p:\shA_{\s}\rightarrow \shX_\s.
\]
Furthermore, $p^*$ induces maps $p^*:K\rightarrow N_{\uf}^{\perp}\subseteq M^\circ$
and $p^*:N/N_{\uf}\rightarrow (K^{\circ})^*$, 
giving maps 
\[
p:T_{N^{\circ}/N_{\uf}\cap N^{\circ}}\rightarrow  T_{K^*},\quad\quad
p:T_{K^{\circ}}\rightarrow  T_{(N/N_{\uf})^*},
\]
respectively.
We then obtain commutative diagrams
\[
\hbox{
$\xymatrix@C=30pt{\shA_{\s}\ar[r]^p\ar[d]&\shX_{\s}\ar[d]\\
T_{N^{\circ}/N_{\uf}\cap N^{\circ}}\ar[r]_p&T_{K^*}}$
$\quad\quad\xymatrix@C=30pt{
T_{K^{\circ}}\ar[r]^p\ar[d]&T_{(N/N_{\uf})^*}\ar[d]\\
\shA_{\s}\ar[r]_p&\shX_{\s}}$
}
\]
\end{enumerate}
\end{remark}

We next define a mutation of seed data.

For $r\in\QQ$ define $[r]_+=\max(0,r)$. Given seed data 
${\bf s}$
and $k\in I_{\uf}$, we have a mutation $\mu_k(\s)$ of ${\bf s}$ given by
a new basis
\begin{equation}
\label{etransform}
e_i':=\begin{cases} e_i+[\epsilon_{ik}]_+e_k&i\not=k\\
-e_k&i=k.
\end{cases}
\end{equation}
Note that $\{e_i'\,|\,i\in I_{\uf}\}$ still form a basis for $N_{\uf}$ and the
$d_ie_i'$ still form a basis for $N^{\circ}$.
Dually, one checks that the basis $\{f_i\}$ for $M^{\circ}$ changes
as 
\[
f_i':=\begin{cases} -f_k+\sum_j [-\epsilon_{kj}]_+f_j& i=k\\
f_i&i\not=k.
\end{cases}
\]
One also checks that the matrix $\epsilon_{ij}$ changes via the formula
\begin{equation}
\label{epsmuteq}
\epsilon_{ij}':=\{e_i',e_j'\}d_j=\begin{cases}
-\epsilon_{ij} & k\in \{i,j\}\\
\epsilon_{ij} & \epsilon_{ik}\epsilon_{kj}\le 0, \quad k\not\in \{i,j\},\\
\epsilon_{ij}+|\epsilon_{ik}|\epsilon_{kj} & 
\epsilon_{ik}\epsilon_{kj} > 0, \quad k\not\in \{i,j\}.
\end{cases}
\end{equation}

We also define birational maps
\begin{align*}
\mu_k:\shX_\s\dasharrow & \shX_{\mu_k(\s)}\\
\mu_k:\shA_\s\dasharrow & \shA_{\mu_k(\s)}
\end{align*}
defined via pull-back of functions
\begin{align}
\label{Xmutationeq}
\mu_k^*z^n= {} & z^n(1+z^{e_k})^{-[n,e_k]},\quad n\in N\\
\label{Amutationeq}
\mu_k^*z^m= {} & z^m(1+z^{v_k})^{-\langle d_ke_k,m\rangle}, \quad m\in 
M^{\circ}.
\end{align}
These maps are more often seen in the cluster literature as described via
pull-backs of cluster variables:
\begin{equation}
\label{Xmutation}
\mu_k^*X_i'=\begin{cases} X_k^{-1} & i=k\\
X_i(1+X_k^{-\sgn(\epsilon_{ik})})^{-\epsilon_{ik}}& i\not=k
\end{cases}
\end{equation}
and
\begin{equation}
\label{Amutation}
A_k\cdot \mu_k^*A_k'=\prod_{j:\epsilon_{kj}>0}A_j^{\epsilon_{kj}}
+\prod_{j:\epsilon_{kj}<0} A_j^{-\epsilon_{kj}},\quad
\mu_k^*A_i'=A_i,\quad i\not = k.
\end{equation}
The correspondence between these two descriptions can be seen using
$X_i=z^{e_i}$, $X_i'=z^{e_i'}$ and $A_i=z^{f_i}$, $A_i'=z^{f_i'}$.

\begin{remark}
\label{tropicalmutremark}
Note in the notation of Equation \eqref{of}, the mutation
\eqref{Amutationeq} is
$\mu_{(-d_k e_k,v_k)}: T_{N^\circ} \dasharrow T_{N^\circ}.$ By Equation \eqref{mof} its tropicalisation is
\[
\mu_k^t(n) = n + [\langle v_k,n\rangle ]_{-}(-d_k e_k) = 
n + [\{n,d_ke_k\}]_+ e_k 
\]
and thus the seed mutation \eqref{etransform} is also given by
\begin{equation}
\label{Aseedtrop}
e_i'=\begin{cases} \mu_k^t(e_i) &i\not=k\\
-e_k = -\mu_k^t(e_k) &i=k.
\end{cases}
\end{equation}

On the other hand, the mutation \eqref{Xmutationeq} is
\[
\mu_{(d_kv_k,e_k)}: T_{M} \dasharrow T_{M}.
\]
This tropicalizes to
\[
\mu_k^t(m) = m + [\langle d_ke_k,m\rangle]_{-} v_k.
\]
Noting that as $p^*$ is a linear function, the $v_i$ transform under
the mutation in the same way the $e_i$ do, i.e., $v_k'=-v_k$, $v_i'=
v_i+[\epsilon_{ik}]_+ v_k$ for $i\not = k$. But 
\[
\mu_k^t(v_i)=v_i+[\epsilon_{ik}]_- v_k\not= v_i',
\]
so we do not obtain an equation analogous to \eqref{Aseedtrop}.
Rather, one checks that
\begin{equation}
\label{Xseedtrop}
-v_i'=\begin{cases} \mu_k^t(-v_i) & i\not=k\\
-\mu_k^t(-v_k)& i=k
\end{cases}
\end{equation}
\qed
\end{remark}

One checks easily the commutativity of the diagrams
\begin{equation}
\label{commdiag1}
\xymatrix@C=30pt
{T_{K^{\circ}}\ar[r]\ar[d]^=&\shA_{\s}\ar[r]^p\ar@![d]^{\mu_k}&\shX_\s\ar[r]
\ar@![d]^{\mu_k}&T_{K^*}\ar[d]^=\\
T_{K^{\circ}}\ar[r]&\shA_{\mu_k(\s)}\ar[r]_p&\shX_{\mu_k(\s)}\ar[r]&
T_{K^*}}
\end{equation}
\begin{equation}
\label{commdiag2}
\hbox{
$\xymatrix@C=30pt{
T_{(N/N_{\uf})^*}\ar[r]\ar[d]^=&\shX_{\s}\ar@![d]^{\mu_k}\\
T_{(N/N_{\uf})^*}\ar[r]&\shX_{\mu_k(\s)}}\quad\quad$
$\xymatrix@C=30pt{
\shA_{\s}\ar[r]\ar@![d]^{\mu_k}&T_{N^{\circ}/N_{\uf}\cap N^{\circ}}\ar[d]^=\\
\shA_{\mu_k(\s)}\ar[r]&T_{N^{\circ}/N_{\uf}\cap N^{\circ}}}$}
\end{equation}

We can now define the $\cX$ and $\cA$ 
cluster varieties associated to the seed $\s$. We will first need the following
general gluing construction:

\begin{proposition}
\label{gluingconstruction}
Let $\{X_i\}$ be a collection of integral, separated schemes of finite type
 over a field
$\kk$, with birational maps $f_{ij}:X_i \dasharrow X_j$ for all $i,j$,
with $f_{ii}$ the identity and $f_{jk}\circ f_{ij}=f_{ik}$ as rational
maps. Let $U_{ij}\subseteq X_i$ be the largest open subset such that
$f_{ij}:U_{ij}\rightarrow f_{ij}(U_{ij})$ is an isomorphism. Then there
is a scheme $X$ obtained by gluing the $X_i$ along the open sets $U_{ij}$
via the maps $f_{ij}$.
\end{proposition}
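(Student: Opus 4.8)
The plan is to reduce the statement to the standard gluing lemma for schemes. Once one knows that the data $(X_i, U_{ij}, f_{ij})$ satisfies the usual compatibilities --- the $U_{ij}\subseteq X_i$ are open, each $f_{ij}\colon U_{ij}\to f_{ij}(U_{ij})$ is an isomorphism \emph{onto the open subset $U_{ji}$}, $U_{ii}=X_i$ with $f_{ii}=\id$, $f_{ij}(U_{ij}\cap U_{ik})=U_{ji}\cap U_{jk}$, and $f_{jk}\circ f_{ij}=f_{ik}$ on $U_{ij}\cap U_{ik}$ \emph{as morphisms} --- the scheme $X$ exists as $(\coprod_i X_i)/\!\sim$, where $x\in U_{ij}$ is identified with $f_{ij}(x)\in U_{ji}$, and each $X_i$ becomes an open subscheme. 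The one structural fact I would use repeatedly is that, since each $X_i$ is integral (hence reduced, with dense generic point) and each $X_j$ is separated, two morphisms from a reduced scheme into $X_j$ agreeing on a dense open subset coincide; this lets one promote identities of rational maps to identities of morphisms over the maximal locus where both sides are defined.

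First I would pin down $U_{ij}$. Let $V_{ij}\subseteq X_i$ be the maximal open subset on which $f_{ij}$ is represented by a morphism; it is dense because the source is integral and the target separated, and $f_{ij}(V_{ij})$ is dense in $X_j$ since $f_{ij}$ is dominant. Put $\widetilde U_{ij}:=V_{ij}\cap f_{ij}^{-1}(V_{ji})$. On $\widetilde U_{ij}$ the morphism $f_{ji}\circ f_{ij}$ agrees with $\id_{X_i}$ on a dense open, since $f_{ji}\circ f_{ij}=f_{ii}=\id$ as rational maps, hence everywhere on $\widetilde U_{ij}$; from this I would deduce $f_{ij}(\widetilde U_{ij})\subseteq\widetilde U_{ji}$ and, symmetrically, that $f_{ij}\colon\widetilde U_{ij}\to\widetilde U_{ji}$ is an isomorphism with inverse $f_{ji}$. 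Conversely, if $U$ is open and $f_{ij}$ restricts to an isomorphism of $U$ onto an open subset of $X_j$, then $f_{ij}$ is a morphism on $U$ and its inverse, being a rational inverse of $f_{ij}$, must represent $f_{ji}$; hence $f_{ij}(U)\subseteq V_{ji}$ and $U\subseteq\widetilde U_{ij}$. So the largest such open exists and equals $\widetilde U_{ij}$: that is, $U_{ij}=\widetilde U_{ij}$, $U_{ii}=X_i$, $f_{ii}=\id$, and $f_{ij}\colon U_{ij}\to U_{ji}$ is an isomorphism with inverse $f_{ji}$.

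Next I would verify the cocycle condition. Put $O:=U_{ij}\cap U_{ik}$ and $O':=f_{ij}(O)\subseteq U_{ji}$. Composing $f_{ik}=f_{jk}\circ f_{ij}$ on the right with $f_{ji}$ and using $f_{ij}\circ f_{ji}=\id$ gives $f_{jk}=f_{ik}\circ f_{ji}$ as rational maps. On $O'$, however, the right-hand side is an honest morphism, the composite of $f_{ji}\colon O'\to O$ with $f_{ik}\colon O\to X_k$, and it is an isomorphism onto the open set $f_{ik}(O)$; since it represents $f_{jk}$, it shows $O'$ lies in the domain of $f_{jk}$ and, by reducedness and separatedness, that $f_{jk}|_{O'}=f_{ik}\circ f_{ji}|_{O'}$, so $f_{jk}$ restricts to an isomorphism of $O'$ onto an open set, i.e.\ $O'\subseteq U_{jk}$. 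This gives $f_{ij}(U_{ij}\cap U_{ik})\subseteq U_{ji}\cap U_{jk}$; the same argument with $i$ and $j$ interchanged, combined with $f_{ij}\circ f_{ji}=\id$ on $U_{ji}$, upgrades this to equality. Finally the morphism identity $f_{jk}\circ f_{ij}=f_{ik}$ on $O$ follows from $f_{jk}|_{O'}=f_{ik}\circ f_{ji}|_{O'}$ together with $f_{ji}\circ f_{ij}=\id$ on $O$. At this point all hypotheses of the gluing lemma hold, so $X$ is constructed; I would also remark that nothing here claims $X$ is separated, and in general it is not.

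I expect the main obstacle to be the first step: identifying $U_{ij}$ explicitly, and in particular justifying that ``isomorphism onto an open subset'' forces the inverse to represent $f_{ji}$, so that a \emph{largest} such open exists. That is exactly where both hypotheses on the $X_i$ are needed --- integrality, so that the $f_{ij}$ have well-defined rational (function-field) inverses and the relations $f_{jk}\circ f_{ij}=f_{ik}$ among rational maps are meaningful, and separatedness, so that agreement of morphisms on a dense open extends to the whole domain of definition. Everything after Step 1 is bookkeeping that feeds directly into the standard gluing construction, which works for an arbitrary (possibly infinite) index set.
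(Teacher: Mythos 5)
Your proof is correct and follows essentially the same route as the paper's: pin down the maximal open $U_{ij}$, show $f_{ij}$ restricts to an isomorphism $U_{ij}\rightarrow U_{ji}$ with inverse $f_{ji}$, verify $f_{ij}(U_{ij}\cap U_{ik})=U_{ji}\cap U_{jk}$ by observing that $f_{jk}$ is defined at $f_{ij}(x)$ via $f_{ik}\circ f_{ij}^{-1}$, and then invoke the standard gluing lemma (Hartshorne, Ex.\ II.2.12). The only real difference is in how $U_{ij}$ is handled: the paper defines it as the locus of points where $f_{ij}$ is a local isomorphism and cites EGA I, 6.5.4 and 8.2.8, for the local-ring characterization and the open-immersion property, whereas you realize the same set as $V_{ij}\cap f_{ij}^{-1}(V_{ji})$ and prove maximality and the isomorphism onto $U_{ji}$ directly from the rigidity of morphisms from reduced schemes to separated schemes, an equivalent but more self-contained treatment.
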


\begin{proof} 
First, the sets $U_{ij}$ exist: take $U_{ij}$ to consist of all
points $x$ in the domain of $f_{ij}$ at which $f_{ij}$ is a local isomorphism.
By \cite{Gr60}, 6.5.4, these are precisely the points $x$ such that
$f_{ij}^*:\shO_{X_j,f_{ij}(x)}\rightarrow \shO_{X_i,x}$ is an isomorphism.
By \cite{Gr60}, 8.2.8, $f_{ij}|_{U_{ij}}$ is an open immersion.

By \cite{H77}, Ex.\ II 2.12, it is now sufficient to
check that $f_{ij}(U_{ij}\cap U_{ik})=U_{ji}\cap U_{jk}$. Clearly
$f_{ij}(U_{ij}\cap U_{ik})\subseteq U_{ji}$. If $x\in U_{ij}\cap U_{ik}$,
then $f_{jk}$ can be defined at $f_{ij}(x)\in U_{ji}$ via 
$f_{ik}\circ f_{ij}^{-1}$. Then clearly $f_{jk}$ is a local isomorphism
at $f_{ij}(x)$, so $f_{ij}(x)\in U_{jk}$. Conversely, if $y\in U_{ji}
\cap U_{jk}$, then $y=f_{ij}(x)$ for some $x\in U_{ij}$. Clearly
$f_{ik}=f_{jk}\circ f_{ij}$ is a local isomorphism at $x$, so $x\in U_{ik}$
also and $y\in f_{ij}(U_{ij}\cap U_{ik})$.
\end{proof}

Let $\foT$ be the oriented rooted tree with $|I_{\uf}|$ outgoing edges from
each vertex, labelled by the elements of $I_{\uf}$. Let $v$ be the root of the 
tree. Attach the seed $\s$ to the vertex $v$. Now each simple path starting at
$v$ determines a sequence of seed mutations, just mutating at the label 
attached to the edge. In this way we attach a seed to each vertex of $\foT$.
We write the seed attached to a vertex $w$ as $\s_w$. We
further attach copies $\cX_{\s_w},\cA_{\s_w}$ to $w$. 

If $\foT$ has a directed edge from $w$ to $w'$ labelled with
$k\in I_{\uf}$, with
associated seeds $\s_w$ and $\mu_k(\s_w)=\s_{w'}$, we obtain mutations
$\mu_k:\cX_{\s_w}\dasharrow \cX_{\s_{w'}}$, $\mu_k:\cA_{\s_w}
\dasharrow \cA_{\s_{w'}}$. 
We can view these maps as arising from traversing the edge
in the direction from $w$ to $w'$; we use the inverse maps $\mu_k^{-1}$ 
if we traverse the edge from $w'$ to $w$.

Now for any two vertices $w,w'$ 
of $\foT$ there is a unique simple path $\gamma$ from one to 
the other. We obtain birational maps
\[
\mu_{w,w'}:\shA_{\s_w}\dasharrow \shA_{\s_{w'}},\quad
\mu_{w,w'}:\shX_{\s_w}\dasharrow \shX_{\s_{w'}},
\]
between the associated tori. These are obtained by taking the composition of
mutations or their inverses associated to each edge traversed by $\gamma$
in the order traversed, using a mutation $\mu_k$ associated to the
edge if the edge is traversed in the direction of its orientation, and
using $\mu_k^{-1}$ if traversed in the opposite direction.

These birational maps clearly satisfy $\mu_{w',w''}\circ \mu_{w,w'}=
\mu_{w,w''}$ as birational maps, and hence by Proposition 
\ref{gluingconstruction}, we obtain schemes
$\cX$ or $\cA$ by gluing these tori using these birational maps.

\begin{remark} 
\label{doublemutationremark}
Note that $\mu_k\circ\mu_k:\shA_{\s}\dasharrow \shA_{\mu_k
(\mu_k(\s))}$ is not the identity when expressed as a map 
$\Spec\kk[M^{\circ}]\dasharrow \Spec\kk[M^{\circ}]$; rather, it is the
isomorphism given by the linear map $M^{\circ}\rightarrow M^{\circ}$,
$m\mapsto m-\langle d_ke_k,m\rangle v_k$. This map takes the basis $\{f_i\}$ 
for the seed $\mu_k(\mu_k(\s))$ to the basis $\{f_i\}$ for the seed $\s$.
This is why $\mu_k\circ\mu_k$ is only the identity when viewed as an
automorphism of $\Spec \kk[A_1^{\pm 1}, \ldots, A_n^{\pm 1}]$.
\end{remark}

\begin{remark}
\label{Noetherianremark}
As we shall see in Theorem \ref{separatedtheorem}, 
the $\cA$ variety is always separated, but the $\cX$
variety usually is not. It is not clear, however, whether either of these
schemes is Noetherian. This will sometimes cause problems in what follows,
but these problems are purely technical. In particular, given any
finite connected regular subtree $\foT'$ of $\foT$, we can use the seed tori
corresponding to vertices in $\foT'$ to define open subschemes of
$\cX$ and $\cA$. We shall write these subschemes as $\cX^{\ft}$ and
$\cA^{\ft}$ respectively. We will not need to be particularly concerned
about which subtree $\foT'$ we use, only that it be sufficiently big
for the purpose at hand. However, we shall always assume $\foT'$
contains the root vertex $v$ and all its adjacent vertices.
\end{remark}

\begin{remark}
\label{extrastructuresremark}
The structures (1)-(4) of Remark \ref{seedtoriremarks}
described on individual seed tori, being
compatible with mutations as seen in Equations \eqref{commdiag1} and 
\eqref{commdiag2}, induce corresponding structure on $\shX$ and
$\shA$. In particular, 
(1) there is a canonical map 
\[
\lambda:\shX\rightarrow T_{K^*}
\]
and a canonical action of $T_{(N/N_{\uf})^*}$ on $\shX$;
(2) there is a canonical action of $T_{K^{\circ}}$ on $\shA$ and a canonical
map 
\[
\shA\rightarrow T_{N^{\circ}/N_{\uf}\cap N^{\circ}};
\]
(3) there is a map 
\[
p:\shA\rightarrow\shX.
\]
This map is compatible with the maps and actions of (1) and (2)
as indicated in Remark \ref{seedtoriremarks}, (3).
\end{remark}

\begin{definition} The $\shX$-cluster algebra ($\shA$-cluster algebra)
associated to a seed $\s$ is $\Gamma(\shX,\shO_{\shX})$
(or $\Gamma(\shA,\shO_{\shA})$).
\end{definition}

\begin{remark} The $\shA$-cluster algebra is usually called the 
\emph{upper cluster algebra} in the literature, see \cite{BFZ05}. This can
be viewed as the algebra of Laurent polynomials in $\kk[M^{\circ}]$
which remain Laurent polynomials under any sequence of mutations. Such a 
Laurent polynomial is called a \emph{universal Laurent polynomial}. The
algebra which is usually just called the \emph{cluster algebra} 
is the sub-algebra of the field of fractions 
$k(\shA_\s)=\kk(A_1,\ldots,A_n)$ of $\shA_\s$ generated by all functions 
\[
\{ \mu_{v,w}^*(A_i')\,|\, \hbox{$A_i'$ is a coordinate on $\shA_{\s_w}$,
$w$ a vertex of $\foT$}\}.
\]
We note that the cluster algebras arising via this construction are
still a special case of the general definition given in \cite{FZ02a},
and are called cluster algebras \emph{of geometric type}
in the literature. These
include most of the important examples.
\end{remark}

We end this section with several variants of the above constructions.

\begin{construction}
\label{frozenvariables}
When there are frozen variables (i.e., $N_{\uf}\not=N$) one frequently
might want to allow the frozen variables $X_i$, $i\not\in I_{\uf}$ or
$A_i$, $i\not\in I_{\uf}$ to take the value $0$. Thus one replaces
$\shX_{\s}$, $\shA_{\s}$ with 
\begin{align*}
\shX_{\s}:= {} & \Spec \kk[\{X_i^{\pm 1}\,|\,i\in I_{\uf}\}\cup
\{X_i\,|\,i\not\in I_{\uf}\}],\\
\shA_{\s}:= {} & \Spec \kk[\{A_i^{\pm 1}\,|\,i\in I_{\uf}\}\cup
\{A_i\,|\,i\not\in I_{\uf}\}].
\end{align*}
These varieties can be defined somewhat more abstractly as toric
varieties, with fans the set of faces of the cone generated by 
$\{e_i^*\,|\,i\not\in I_{\uf}\}$ and $\{d_ie_i\,|\,i\not\in I_{\uf}\}$ 
respectively. 
One sees from \eqref{Xmutation} and \eqref{Amutation} that no $X_i$ or
$A_i$ for $i\not\in I_{\uf}$ is inverted by mutations. Thus cluster varieties
$\shX$, $\shA$ can be defined via gluing these modified spaces as before.
In particular, we obtain a map $\shA\rightarrow \Spec\kk[\{A_i\,|\,i\not\in
I_{\uf}\}]$.

In any event, Fock and Goncharov \cite{FG11} 
define the \emph{special completion}
of the $\shX$ variety, written as $\widehat{\shX}$, by replacing each
$\shX_{\s}$ with the affine space $\Spec\kk[X_1,\ldots,X_n]$, and using
the same definition for the birational maps between the $\shX_{\s}$
as usual.
\end{construction}

\begin{construction}
\label{Aprincipaldef}
We define the notion of cluster algebra with \emph{principal 
coefficients}.
In general, given fixed data $N, \{\cdot,\cdot\}$ as usual 
along with seed data $\s$, we construct the \emph{double}
of the lattice via
\[
\widetilde{N}=N\oplus M^{\circ}, \quad
\{(n_1,m_1),(n_2,m_2)\}=\{n_1,n_2\}+\langle n_1,m_2\rangle-\langle n_2,m_1
\rangle.
\]
We take $\widetilde N_{\uf}=N_{\uf}\subseteq \widetilde N$, and $\widetilde 
N^\circ$
the sublattice $N^{\circ}\oplus M$.
The lattice $\widetilde{N}$ with its pairing $\{\cdot,\cdot\}$ and
sublattices $\widetilde{N}_{\uf}$, $\widetilde{N}^{\circ}$
can now play the role of fixed data. Given a seed $\s$ for the original fixed
data, we obtain a seed $\tilde\s$ for $\widetilde{N}$ with basis 
$\{(e_i,0), (0,f_{\alpha})\}$.
We use the convention that indices
$i,j,k\in I$ are used to index the first set of basis elements and
$\alpha,\beta,\gamma\in I$ are used to index the second set of basis elements.
The integer $d_i$ associated with $(e_i,0)$ or $d_{\alpha}$ associated to
$(0,f_{\alpha})$ is then taken to agree with $d_i$ or $d_{\alpha}$ of 
the original seed. Then
the matrix $\tilde\epsilon$ determined by this seed is given by
\[
\tilde\epsilon_{ij}=\epsilon_{ij}, \quad
\tilde\epsilon_{i\beta}=\delta_{i\beta},\quad \tilde\epsilon_{\alpha j}
=-\delta_{\alpha j}, \quad \tilde\epsilon_{\alpha\beta}=0.
\]
One notes that $\widetilde{M}=\Hom(\widetilde{N},\ZZ)=M\oplus N^{\circ}$
and $\widetilde{M}^{\circ}=M^{\circ}\oplus N$. Furthermore,
given a choice of $p^*:N\rightarrow M^{\circ}$, we can take
the map 
$p^*:\widetilde{N}\rightarrow \widetilde{M}^{\circ}$ to be given by
\[
p^*(e_i,0)=(p^*(e_i),e_i), \quad p^*(0,f_{\alpha})=(-f_{\alpha},0),
\]
so that $p^*$ is an isomorphism. 

With this choice of fixed and seed data, the corresponding 
$\shA$ cluster variety will be written as $\shA_{\prin}$. The ring
of global functions on $\shA_{\prin}$ is the upper cluster algebra
with principal coefficients at the seed $\s$ 
of \cite{FZ07}, Def.\ 3.1.

$\shA_{\prin}$ has an additional relationship with $\shX$. There are two natural
inclusions
\begin{eqnarray*}
\tilde p^*:N \rightarrow & \widetilde M^{\circ},
\quad\quad\quad \pi^*:&N\rightarrow
\widetilde M^{\circ}\\
n \mapsto & (p^*(n),n),\quad\quad\quad &n\mapsto (0,n)
\end{eqnarray*}
The first inclusion induces for any seed $\s$ an exact sequence of tori
\[
1\mapright{} T_{N^{\circ}}\mapright{} \shA_{\prin,\s}\mapright{\tilde p}
\shX_{\s}\mapright{} 1.
\]
One checks that $\tilde p$ commutes with the mutations $\mu_k$ on 
$\shA_{\prin,\s}$ and $\shX_{\s}$. Thus we obtain a morphism
$\tilde p:\shA_{\prin}\rightarrow\shX$. The $T_{N^{\circ}}$ action on 
$\shA_{\prin,\s}$ gives a $T_{N^{\circ}}$ action on $\shA_{\prin}$,
making $\tilde p$ the quotient map for this action and $\shA_{\prin}$
is a $T_{N^{\circ}}$-torsor over $\shX$. On the other hand, $\pi^*$
induces a projection
\begin{equation}
\label{Aprincipalmap}
\pi:\shA_{\prin}\rightarrow T_M.
\end{equation}
We note that if $e\in T_M$ denotes the identity element, then 
$\pi^{-1}(e)=\cA$. To see this, note the fibre of 
$\pi:\shA_{\prin,\s}\rightarrow
T_M$ over $e$ is canonically $\shA_\s$, and a mutation $\mu_k$ on
$\shA_{\prin,\s}$ specializes to the corresponding mutation on $\shA_\s$.
The open subset on which a mutation $\mu_{w,w'}:\shA_{\prin,\s_w}\rightarrow
\shA_{\prin, \s_{w'}}$ is an isomorphism onto its image restricts
to the corresponding open subset of $\shA_{\s_w}$; otherwise, $\shA_{\prin}$
would not be separated, contradicting Theorem \ref{separatedtheorem}.
\end{construction}

\begin{definition}
Let $t\in T_M$. We write $\shA_t$ for the fibre $\pi^{-1}(t)$. We call
this an $\cA$ \emph{cluster variety with general coefficients}.
\end{definition}

\begin{construction}
\label{variousrelations}
In case there are no frozen variables, i.e., $N=N_{\uf}$, we have $p^*=p_2^*$
and $K=\ker p^*$. We then have a commutative diagram
\[
\xymatrix@C=30pt
{N\ar[r]^{\tilde p^*}&\widetilde M^{\circ}\\
K\ar[u]^{\lambda^*}\ar[r]_{i^*}&N\ar[u]_{\pi^*}}
\]
where both $i^*$ and $\lambda^*$ are the inclusion. 
This induces a commutative diagram
\begin{equation}
\label{principaltoXdiagram}
\xymatrix@C=30pt
{\shX\ar[d]_{\lambda}&\ar[l]_{\tilde p}\shA_{\prin}\ar[d]^{\pi}\\
T_{K^*}&\ar[l]^{i}T_M
}
\end{equation}

Note that for $t\in T_M$, $\tilde p$ restricts to a map
\[
p_t:\shA_t\rightarrow \lambda^{-1}(i(t))=\shX_{i(t)}.
\]
\end{construction}

\section{The geometry of cluster varieties} \label{gcvsec}

We now give our description of cluster varieties as blowups of toric
varieties and mutations as elementary transformations of $\PP^1$-bundles.
This gives rise to most of the results in this paper, including
a simple explanation for the Laurent phenomenon and counterexamples to
some basic conjectures about cluster algebras.

\subsection{Elementary transformations.}
\label{elementarysection}

The basic point is that the gluing of adjacent seed tori can be easily
described in terms of blow-ups of toric varieties, and
that mutations have a simple interpretation
as a well-known operation in algebraic geometry known as an elementary
transformation. To describe this in general, we fix a lattice $N$
with no additional data, and a primitive vector $v\in N$. 
The projection $N\rightarrow N/\ZZ v$ gives a $\Gm$-bundle 
\[
\pi:T_N\rightarrow T_{N/\ZZ v}.
\]
A non-zero regular function $f$ on $T_{N/\ZZ v}$ can be viewed as a map 
\begin{align*}
f:T_{N/\ZZ v}\setminus V(f)\rightarrow {} & T_{\ZZ v}
\subseteq T_N=N\otimes_{\ZZ}\Gm\\
t\mapsto {} & v\otimes f(t)
\end{align*}
to obtain a birational map
\begin{align*}
\mu_f:T_N\dasharrow {}  & T_N\\
t \quad \mapsto {}  & f(\pi(t))^{-1}\cdot t.
\end{align*}
Note that on the level of pull-back of functions, this is defined, for
$m\in M=\Hom(N,\ZZ)$, by
\[
z^m\mapsto z^m (f\circ\pi)^{-\langle m,v\rangle}.
\]
Indeed, this is easily checked by choosing a basis $f_1,\ldots,f_n$ of
$M$ with $\langle f_1,v\rangle=1$, $\langle f_i,v\rangle=0$ for $i>1$.
This gives coordinates $x_i=z^{f_i}$, $1\le i \le n$,
on $T_N$ so that the projection $\pi$ is given by $(x_1,\ldots,x_n)
\mapsto (x_2,\ldots,x_n)$, and the map $\mu_f$ is given by
\[
(x_1,\ldots,x_n)\mapsto (f(x_2,\ldots,x_n)^{-1} x_1,x_2,\ldots,x_n).
\]

Now consider the fan $\Sigma_{v,+}=\{\RR_{\ge 0}v,0\}$
in $N$. This defines a toric variety $\TV(\Sigma_{v,+})$ isomorphic
to $\AA^1\times T_{N/\ZZ v}$, and contains a toric divisor $D_+$. It
has a canonical projection $\pi:\TV(\Sigma_{v,+})\rightarrow T_{N/\ZZ v}$,
which induces an isomorphism $D_+\cong T_{N/\ZZ v}$.
Set 
\[
Z_+=\pi^{-1}(V(f))\cap D_+.
\]
This hypersurface may be non-reduced. Define
\begin{align*}
&\hbox{$\widetilde{\TV}(\Sigma_{v,+})\rightarrow \TV(\Sigma_{v,+})$ the blowup
of $Z_+$,}\\
&\hbox{$\tilde D_+$ the proper transform of $D_+$,}\\
&U_{v,+}=\tTV(\Sigma_{v,+})\setminus \tilde D_+.
\end{align*} 
Note that $\Gamma(U_{v,+},\shO_{U_{v,+}})=\Gamma(\TV(\Sigma_{v,+}),
\shO_{\TV(\Sigma_{v,+})})[f/x_1]$.

We can also use $\mu_f$ to define a variety $X_f$ obtained by gluing
together two copies of $T_N$ using $\mu_f$ along the open subsets
$T_N\setminus V(f\circ \pi)\subseteq T_N$.

We then obtain the following basic model for describing gluings of
tori as blowups of toric varieties:

\begin{lemma}
\label{twotoriglue}
There is an open immersion $X_f\hookrightarrow U_{v,+}$
such that $U_{v,+}\setminus X_f$ is codimension
two in $U_{v,+}$. Furthermore, the projection $\pi:U_{v,+}
\rightarrow T_{N/\ZZ v}$ is a $\Gm$-bundle over $T_{N/\ZZ v}\setminus V(f)$,
while the fibres of $\pi$ over $V(f)$ are each a union of two copies of
$\AA^1$ meeting at a point. The locus where $\pi$ is not smooth is precisely
$U_{v,+}\setminus X_f$.
\end{lemma}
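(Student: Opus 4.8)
The plan is to do an explicit local computation on $\TV(\Sigma_{v,+})\cong\AA^1\times T_{N/\ZZ v}$ using the coordinates $x_1,\dots,x_n$ introduced just above the lemma, where $\pi$ is the projection $(x_1,\dots,x_n)\mapsto(x_2,\dots,x_n)$ and $D_+=\{x_1=0\}$. First I would analyze the blowup $\tTV(\Sigma_{v,+})\to\TV(\Sigma_{v,+})$ along $Z_+=\{x_1=0,\ f\circ\pi=0\}$, which is cut out in $D_+$ by the single equation $f\circ\pi$. This blowup is covered by two charts: the chart where $\tilde D_+$ is visible, with coordinate ratio $x_1/(f\circ\pi)$ (so $\tilde D_+=\{x_1/(f\circ\pi)=0\}$ locally), and the complementary chart with coordinate $t:=(f\circ\pi)/x_1$ and functions $(t,x_1,x_2,\dots,x_n)$ subject to $x_1 t=f\circ\pi$. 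Removing $\tilde D_+$ means deleting the first chart's divisor; what remains, $U_{v,+}$, is covered by the original torus $T_N$ (where $x_1\neq 0$, i.e.\ the locus of $\TV(\Sigma_{v,+})$ away from $D_+$, which is unaffected by the blowup) together with the exceptional chart.

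Next I would identify the exceptional chart with a second copy of $T_N$ up to codimension two. On the exceptional chart, the function $x_1=(f\circ\pi)/t$ is invertible precisely where $t\neq 0$ and $f\circ\pi\neq 0$; the remaining functions $x_2,\dots,x_n$ are always invertible (pulled back from the torus $T_{N/\ZZ v}$). So the open subset of this chart where $t\neq 0$ is a copy of $T_N$ with coordinates $(t,x_2,\dots,x_n)$, and the change of coordinates from the first copy is exactly $x_1\mapsto (f\circ\pi)/t$, i.e.\ $t=(f\circ\pi)\cdot x_1^{-1}$, which is precisely the transition map $\mu_f$ (compare the displayed formula $(x_1,\dots,x_n)\mapsto(f(x_2,\dots,x_n)^{-1}x_1,x_2,\dots,x_n)$: the new first coordinate is $f^{-1}x_1$ so its inverse $t$ is $f x_1^{-1}$). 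Gluing these two copies of $T_N$ along $T_N\setminus V(f\circ\pi)$ via $\mu_f$ is by definition $X_f$, so we get the open immersion $X_f\hookrightarrow U_{v,+}$. The complement $U_{v,+}\setminus X_f$ is the locus $\{t=0\}$ in the exceptional chart, which maps isomorphically onto $Z_+\subset D_+$ via $\pi$, hence is codimension two in $U_{v,+}$ (it is a hypersurface in $\tilde D_+$... more precisely it is $\tilde D_+$ meeting the exceptional chart, but the exceptional divisor $E$ has $\{t=0\}$ as a divisor, and $\{t=0\}$ is $\codim 1$ in $E$ which is $\codim 1$ in $\tTV$, giving $\codim 2$ overall). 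I should double-check this codimension count carefully since the non-reducedness of $Z_+$ could in principle matter; but blowing up a nonreduced Cartier-type center defined by one equation in $D_+$ still yields an exceptional divisor with the stated structure.

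Finally I would read off the fibre structure of $\pi:U_{v,+}\to T_{N/\ZZ v}$. Over a point of $T_{N/\ZZ v}\setminus V(f)$ nothing has changed, so the fibre is $\Gm$ and $\pi$ is a $\Gm$-bundle there. Over a point of $V(f)$, the fibre of the ambient $\TV(\Sigma_{v,+})\to T_{N/\ZZ v}$ is $\AA^1$ with the point $D_+\cap(\text{fibre})$ lying in the blown-up center; the blowup separates the proper transform of this $\AA^1$ (which becomes one $\AA^1$, losing its $D_+$-point to $\tilde D_+$, hence removed — wait, it keeps the exceptional point) from the exceptional $\PP^1$ in the fibre direction — after deleting $\tilde D_+$ the fibre becomes two copies of $\AA^1$ glued at the point $\{t=0\}$ of the exceptional chart (one is the proper transform coordinatized by $t$, the other the part of $E$ coordinatized by $x_1$). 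And the locus where $\pi$ fails to be smooth is exactly the union of these nodal fibres minus their smooth locus — but $\pi$ is actually submersive along the two $\AA^1$-branches away from the node, so the non-smooth locus is precisely the set of nodes $\{t=0\}=U_{v,+}\setminus X_f$.

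The main obstacle I anticipate is bookkeeping around the non-reduced hypersurface $Z_+$: I must make sure that the "two copies of $\AA^1$ meeting at a point" description and the codimension-two claim survive when $f\circ\pi$ vanishes to higher order, i.e.\ that blowing up a nonreduced divisor-in-a-divisor behaves as in the reduced case for these purposes. Working in the explicit two charts with the relation $x_1 t = f\circ\pi$ should make this transparent, since that relation is what governs everything regardless of multiplicity.
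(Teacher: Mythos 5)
Your computation is essentially the paper's proof in affine-chart form: the paper realizes the blowup as $\{ux_1=vf\}\subseteq\PP^1\times\TV(\Sigma_{v,+})$ and writes down two explicit torus embeddings $\iota_1,\iota_2$ whose transition is $\mu_f$; your chart with $t=(f\circ\pi)/x_1$ and relation $tx_1=f\circ\pi$, together with the identification of the second torus as $\{t\neq 0\}$ via $x_1'=t^{-1}$, is exactly $\iota_2$, so the open immersion $X_f\hookrightarrow U_{v,+}$ and the fibre description come out the same way.

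There is, however, a concrete error in how you name the complementary locus, and as written it would contradict the codimension-two claim. In the exceptional chart $\{tx_1=f\circ\pi\}$ the set $\{t=0\}$ is a divisor, isomorphic to $\AA^1_{x_1}\times V(f)$, and most of it lies \emph{inside} $X_f$: a point with $t=0$ and $x_1\neq 0$ is just a point of the original torus $T_N$ sitting over $V(f)$. The correct complement is $U_{v,+}\setminus X_f=\{t=0,\ x_1=0\}$ (the paper's locus $u=x_1=0$, $f=0$), which is a copy of $V(f)\subset T_{N/\ZZ v}$ and hence genuinely of codimension two; your ``maps isomorphically onto $Z_+$'' and the count ``codimension $1$ in $E$, codimension $1$ in $\tTV$'' are only true of this smaller locus, so you should impose both equations. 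Relatedly, the parenthetical ``it is $\tilde D_+$ meeting the exceptional chart'' is false: $\tilde D_+=\{v=x_1=0\}$ is disjoint from the chart $v\neq 0$; the complement is a divisor in the exceptional divisor $E=\{x_1=0\}$ of that chart, not in $\tilde D_+$. Finally, in the nodal fibre over a point of $V(f)$ you have the two branches swapped: the proper transform of the $\AA^1$-fibre is $\{t=0\}$, coordinatized by $x_1$, and the exceptional branch is $\{x_1=0\}$, coordinatized by $t$; the node is $t=x_1=0$, and the set of these nodes is what equals $U_{v,+}\setminus X_f$. With these corrections the argument is complete and coincides with the paper's.
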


\begin{proof}
Using coordinates $(x_1,\ldots,x_n)$ for $\TV(\Sigma_{v,+})$ as before,
with $D_+$ given by $x_1=0$, note the ideal of $Z_+$ is $(x_1,f)$. 
Thus the blow-up of $Z_+$ is given by the equation $ux_1=vf$ in
$\PP^1\times \TV(\Sigma_{v,+})$. We define two embeddings of $T_N$,
\begin{align*}
\iota_1:(x_1,\ldots,x_n)\mapsto {} & \big((f,x_1),(x_1,\ldots,x_n)\big)\\
\iota_2:(x_1,\ldots,x_n)\mapsto {} & \big( (1,x_1), (x_1f,x_2,\ldots,x_n)\big)
\end{align*}
Noting that $\mu_f=\iota_2^{-1}\circ\iota_1$, 
it is clear that these maps give an embedding of $X_f$.
The divisor $\tilde D_+$ is given by the equation $v=x_1=0$,
so the only points of $U_{v,+}$ missed by the open immersion
$X_f\hookrightarrow U_{v,+}$ are the points where $u=x_1=0$, i.e.,
points of the form $\big((0,1),(0,x_2,\ldots,x_n)\big)$ with $f(x_2,
\ldots,x_n)=0$. The remaining statements are clear.
\end{proof}

Next we examine how this gives a basic model for a mutation. Consider the fan
$\Sigma_v:=\{\RR_{\ge 0}v, \RR_{\le 0}v, 0\}$.
This defines a toric variety we write as $\PP$, and it comes
with divisors $D_+$, $D_-$ corresponding to the two rays and a map
\[
\pi:\PP\rightarrow T_{N/\ZZ v},
\] 
identifying $D_+$ and $D_-$ with $T_{N/\ZZ v}$. Let 
\begin{align*}
Z_+= {} & 
D_+\cap V(f\circ \pi),\\
Z_-= {} & D_-\cap V(f\circ\pi).
\end{align*} We have two
blow-ups 
\[
b_{\pm}:\tilde\PP_{\pm} \rightarrow \PP
\]
being the blow-ups of $Z_{+}$ and $Z_{-}$.

\begin{lemma}
\label{elemtransformlemma}
The rational map $\mu_f:T_N\dasharrow T_N$ extends to a regular isomorphism
$\mu_f:\tilde\PP_+\rightarrow \tilde\PP_-$.
\end{lemma}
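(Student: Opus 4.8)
The plan is to reduce the claim to the affine-chart computation already encapsulated in Lemma \ref{twotoriglue}. First I would set up explicit coordinates: choose a basis $f_1,\dots,f_n$ of $M$ with $\langle f_1,v\rangle=1$ and $\langle f_i,v\rangle=0$ for $i>1$, giving coordinates $x_i=z^{f_i}$ on $T_N$ such that $\pi$ is $(x_1,\dots,x_n)\mapsto(x_2,\dots,x_n)$. The fan $\Sigma_v$ is covered by the two affine cones $\RR_{\ge0}v$ and $\RR_{\le0}v$; correspondingly $\PP$ is covered by two charts, $\PP\setminus D_-\cong\TV(\Sigma_{v,+})$ with coordinate $x_1$ vanishing on $D_+$, and $\PP\setminus D_+\cong\TV(\Sigma_{v,-})$ with coordinate $x_1^{-1}$ vanishing on $D_-$. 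In the first chart $Z_+$ has ideal $(x_1,f\circ\pi)$, in the second $Z_-$ has ideal $(x_1^{-1},f\circ\pi)$.

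Next, over the chart $\PP\setminus D_-$ the blowup $\tilde\PP_+\to\PP$ restricts to exactly the space $\tTV(\Sigma_{v,+})$ of Lemma \ref{twotoriglue}, so that lemma gives an open immersion $X_f\hookrightarrow\tilde\PP_+\setminus b_+^{-1}(D_-)$ with codimension-two complement, and symmetrically an open immersion $X_f\hookrightarrow\tilde\PP_-\setminus b_-^{-1}(D_+)$ with codimension-two complement; here I use that $b_+$ is an isomorphism over $\PP\setminus D_+\supseteq D_-$ and $b_-$ is an isomorphism over $\PP\setminus D_-\supseteq D_+$. The birational map $\mu_f$ on $T_N\subseteq X_f$ therefore induces a birational map $\mu_f:\tilde\PP_+\dasharrow\tilde\PP_-$ which is defined and is an isomorphism on the dense open set corresponding to $X_f$. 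It remains to check it extends across the two codimension-two loci and across $D_\pm$.

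For the extension I would compute in the blowup charts directly, as in the proof of Lemma \ref{twotoriglue}: the blowup of $(x_1,f)$ in $\PP\setminus D_-$ is $\{ux_1=\tilde v\, f\}\subseteq\PP^1_{[u:\tilde v]}\times(\PP\setminus D_-)$, and one has the embeddings $\iota_1,\iota_2$ of $T_N$ given there, with $\mu_f=\iota_2^{-1}\circ\iota_1$. One writes the analogous charts for $\tilde\PP_-$ using $Z_-=(x_1^{-1},f)$, and checks that the transition is given by regular, invertible formulae — essentially $(u:\tilde v)\leftrightarrow$ the analogous ratio on the other side, together with $x_1\leftrightarrow x_1 f$ on the relevant chart — so that $\mu_f$ is an isomorphism on all four charts covering $\tilde\PP_\pm$, including the proper transforms of $D_+$ and $D_-$ and the exceptional divisors. (Concretely: $\mu_f$ should swap the roles of $D_+$ and $D_-$, carry the exceptional divisor of $b_+$ over $Z_+$ to the proper transform of a fiber component, and vice versa; both $\tilde\PP_\pm\to T_{N/\ZZ v}$ are the same $\PP^1$-bundle away from $V(f)$ and have the same reducible fibers over $V(f)$.)

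The main obstacle is the bookkeeping at the two ``extra'' points: the point of $\tilde\PP_+$ over $D_-$ lying on the exceptional locus pattern of Lemma \ref{twotoriglue} (the point $((0:1),(0,x_2,\dots,x_n))$ with $f=0$ in the $+$-chart) must be matched with the corresponding point of $\tilde\PP_-$ over $D_+$, and one must verify the matching is a local isomorphism of schemes and not merely a bijection — i.e. that the two reducible fibers really are glued compatibly. This is a finite local calculation in the four charts; everything else is formal. Once it is done, $\mu_f$ and its inverse are mutually inverse regular morphisms, giving the asserted isomorphism $\mu_f:\tilde\PP_+\xrightarrow{\ \sim\ }\tilde\PP_-$.
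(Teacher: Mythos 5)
Your plan stops exactly where the lemma begins. After embedding $X_f$ in both blow-ups, you write that the extension across the exceptional divisors, the proper transforms of $D_\pm$, and the two codimension-two loci ``is a finite local calculation in the four charts; everything else is formal'' --- but that calculation \emph{is} the assertion of the lemma, and you never carry it out: the transition maps are only described as ``essentially'' certain formulae, with no verification of regularity or invertibility on the exceptional loci. The slips that do appear suggest the bookkeeping has not actually been done: the complement of $X_f$ in $\tilde\PP_+\setminus b_+^{-1}(D_-)$ is \emph{not} of codimension two (it contains the divisor $\tilde D_+$, since Lemma \ref{twotoriglue} embeds $X_f$ into $U_{v,+}=\tTV(\Sigma_{v,+})\setminus\tilde D_+$), and the points of $U_{v,+}$ missed by $X_f$, namely $\big((0:1),(0,x_2,\dots,x_n)\big)$ with $f=0$, lie over $D_+$, not over $D_-$ as you state (and symmetrically on the other side). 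Moreover, routing through the open immersion of $X_f$ is unnecessary here: $\mu_f$ is already a birational self-map of $\PP$, and the only question is where it and its inverse become regular after the blow-ups.

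The paper's proof avoids all chart matching and you may want to adopt it. Writing $\PP=\PP^1\times T_{N/\ZZ v}$ with homogeneous coordinates $(x_1:y_1)$, the map is globally $\big((x_1:y_1),(x_2,\dots,x_n)\big)\mapsto\big((x_1:f(x_2,\dots,x_n)\,y_1),(x_2,\dots,x_n)\big)$; its indeterminacy locus is exactly $x_1=f=0$, i.e.\ $Z_+$, so blowing up $Z_+$ gives a morphism $\tilde\PP_+\to\PP$. Since the ideal $(y_1,f)$ of $Z_-$ pulls back to an invertible ideal sheaf on $\tilde\PP_+$, the universal property of blowing up factors this morphism through $\tilde\PP_-$. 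Applying the same two steps to the inverse rational map $\big((x_1:y_1),(x_2,\dots,x_n)\big)\mapsto\big((f(x_2,\dots,x_n)\,x_1:y_1),(x_2,\dots,x_n)\big)$ produces a morphism $\tilde\PP_-\to\tilde\PP_+$, and the two lifts are mutually inverse because they are inverse as rational maps between separated integral schemes. If you prefer to keep your chart-by-chart approach, you must actually write down and check the four transition maps, including on the exceptional divisors and on $\tilde D_\pm$; as submitted, the proposal asserts rather than proves the statement.
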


\begin{proof}
Working in coordinates $(x_1,\ldots,x_n)$ as before, we can
describe $\PP$ as $\PP^1\times T_{N/\ZZ v}$ with coordinates
$(x_1:y_1)$ on $\PP^1$ and coordinates $x_2,\ldots,x_n$ on
$T_{N/\ZZ v}$. Here $D_+$ is given by $x_1=0$ and $D_-$ by $y_1=0$.
Then $\mu_f$ is given as
\[
\big((x_1:y_1),(x_2,\ldots,x_n)\big)
\mapsto \big((x_1,f(x_2,\ldots,x_n)y_1),(x_2,\ldots,x_n)\big).
\]
This fails to be defined precisely where $x_1=f=0$, i.e., along
$Z_+$, and blowing up $Z_+$ clearly resolves this indeterminacy.
Thus $\mu_f:\PP\dasharrow\PP$ lifts to a morphism $\mu_f:\tilde\PP_+
\rightarrow\PP$. On the other hand, since the ideal sheaf  of
$Z_-$ in $\PP$ (locally generated by $y_1$ and $f$) pulls back via
$\mu_f$ to an invertible
sheaf on $\tilde\PP_+$, this morphism factors as a morphism 
$\mu_f:\tilde\PP_+\rightarrow\tilde\PP_-$ by the universal property
of blowing up.

To see that $\mu_f$ as viewed in this way is a regular isomorphism, note
the inverse rational
map $\mu_f^{-1}$ can be written as $t\mapsto f(\pi(t))\cdot t$, and
thus as a map $\PP\dasharrow\PP$ is written as
\[
\big((x_1:y_1),(x_2,\ldots,x_n)\big)
\mapsto \big((f(x_2,\ldots,x_n)x_1, y_1),(x_2,\ldots,x_n)\big).
\]
This lifts to a well-defined morphism $\mu_f^{-1}:\tilde\PP_-\rightarrow
\tilde\PP_+$ as before. Thus $\mu_f$ is an isomorphism
between $\tilde\PP_+$ and $\tilde\PP_-$.
\end{proof}

\begin{remark}
This lemma should be interpreted as saying
that $\mu_f:\PP\dasharrow\PP$ can be viewed
as the birational map described as the blow-up of $Z_+$ followed by
the contraction of the proper transform of $\pi^{-1}(V(f))\subseteq \PP$
in $\tilde\PP_+$ to $Z_-\subseteq \PP$. This is a birational operation called
an \emph{elementary transformation} in algebraic geometry.

Furthermore, 
let $\tilde D_{\pm}$ be the proper transform of $D_{\pm}$ in either
$\tilde\PP_+$ or $\tilde \PP_-$. Then 
combining Lemmas \ref{twotoriglue} and \ref{elemtransformlemma}, 
this tells us that there are open immersions of
$X_f$ in $\tilde\PP_{\pm}\setminus 
(\tilde D_+\cup \tilde D_-)$, missing a codimension
two subset. 
The roles the two coordinate tori of $X_f$ play are reversed under these
two immersions;
one of the tori of $X_f$ is the inverse image of the big torus orbit
under the blow-up $\tilde\PP_-\rightarrow\PP$, and the other torus in 
$X_f$ is the inverse image of the big torus orbit under the blow-up
$\tilde\PP_+\rightarrow\PP$.
\end{remark}

We need an extended version of the above setup: 

\begin{construction}
\label{basicconstruction}
Suppose we have the data of a fan $\Sigma=\{\RR_{\ge 0}v_i\,|\,
1\le i\le \ell\}\cup\{0\}$ where $v_1,\ldots,v_{\ell}\in N$ are 
primitive, $w_1,\ldots,w_\ell\in M$ with
$\langle v_i,w_i\rangle =0$. We allow some of the $v_i$'s to coincide.
Let $a_1,\ldots, a_{\ell}$ be positive integers,
$c_1,\ldots,c_{\ell}\in\kk^{\times}$, and $\mu_i:T_N\dasharrow T_N$
be defined as before by the data $f_i=(1+c_iz^{w_i})^{a_i}$ and $v_i$, where
$c_i\in \kk^{\times}$.
Let $\TV(\Sigma)$ be the toric variety defined by $\Sigma$, and let
$D_i$ be the toric divisor corresponding to $\RR_{\ge 0} v_i$. 

In what follows, we use the notation $\bar V(f_i)$ for the closure
of $V(f_i)\subseteq T_N$ in $\TV(\Sigma)$. Define
\begin{align*}
&Z_j=  D_j\cap \bar V(f_j),\\
&\pi:\tTV(\Sigma)\rightarrow \TV(\Sigma)\,\,\hbox{the blow-up
along $\bigcup_{i=1}^{\ell} Z_i$,}\\
&\tilde D_j  \,\,\hbox{the proper transform of $D_j$}.
\end{align*}

On the other hand, define a scheme $X$ as follows. Let $T_0,\ldots,T_{\ell}$
be $\ell+1$ copies of the torus $T_N$. The map $\mu_i$ is viewed as
an isomorphism between open sets 
\[
\varphi_{0i}:=\mu_i:U_{0i}\rightarrow U_{i0}
\]
of
$T_0$ and $T_i$ respectively, with $U_{0i}$ taken as the largest possible
such open subset. Indeed, we can take $U_{0i}=T_0\setminus V(f_i)$
and $U_{i0}=T_i\setminus V(f_i)$. In addition, for $1\le i,j\le \ell$, 
define $\varphi_{ij}:=\mu_j\circ\mu_i^{-1}$, and define $U_{ij}$ to be
the largest subset of $T_i$ on which $\varphi_{ij}$ defines an open
immersion.
The identifications $\varphi_{ij}$ then
provide gluing data to obtain a scheme $X$, in
general not separated, by Proposition \ref{gluingconstruction}.

\begin{lemma}
\label{torusgluinglemma2}
There is a natural morphism
\[
\psi:X\rightarrow \tilde U_{\Sigma}:=
\tTV(\Sigma)\setminus \bigcup_i \tilde D_i,
\]
which in special cases satisfies the following properties:
\begin{enumerate}
\item If $\dim Z_i\cap Z_j <\dim Z_i$ for all $i\not=j$, then 
$\psi$ is an isomorphism off a set of codimension $\ge 2$.
\item If $Z_i\cap Z_j=\emptyset$ for all $i\not=j$, then
$\psi$ is an open immersion. In particular, in this case, $X$
is separated.
\end{enumerate}
\end{lemma}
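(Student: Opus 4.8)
The plan is to construct $\psi$ locally, one torus chart $T_i$ at a time, by exhibiting each $T_i$ as the structure torus of a toric variety that maps to $\widetilde{\TV}(\Sigma)$, and then to check the two special properties by a purely local analysis near the exceptional loci. First I would build the map on $T_0$: the identity embedding $T_0 = T_N \hookrightarrow \TV(\Sigma)$ lifts, after blowing up $\bigcup_i Z_i$, to an embedding $T_0 \hookrightarrow \tTV(\Sigma)$ landing in $\tilde U_\Sigma$, since $T_N$ is disjoint from every $D_i$ and hence from every $Z_i$ and $\tilde D_i$. For the chart $T_i$ with $i \ge 1$, the key observation — exactly as in Lemma \ref{elemtransformlemma} and the remark following it — is that $\mu_i^{-1}$ identifies $T_i$ with the big-torus chart on the \emph{other} side of the elementary transformation centered at $Z_i$; concretely, repeating the coordinate computation of Lemma \ref{twotoriglue} with $\Sigma$ in place of $\Sigma_{v,+}$, the copy $T_i$ maps into $\widetilde{\TV}(\Sigma)$ so that its image is a neighborhood of the exceptional divisor $E_i$ over $Z_i$ minus the proper transforms of all the $D_j$. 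Because the $\varphi_{ij}$ are, by construction, the transition maps $\mu_j \circ \mu_i^{-1}$, these local lifts are compatible on overlaps, so by the gluing data defining $X$ (Proposition \ref{gluingconstruction}) they patch to a single morphism $\psi: X \to \tilde U_\Sigma$.

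Next I would identify precisely what $\psi$ misses. Working in the coordinates where $D_j$ is $\{x_1 = 0\}$ (after a change of basis of $M$ adapted to $v_j$) and the ideal of $Z_j$ is $(x_1, f_j)$, the blow-up chart containing $E_j \setminus \tilde D_j$ is covered by $T_0$ together with $T_j$ exactly as in Lemma \ref{twotoriglue}; the only points of $\tilde U_\Sigma$ lying over a neighborhood of $D_j$ and \emph{not} in the image of $T_0 \cup T_j$ are the points of $E_j$ lying over the \emph{singular locus} of $Z_j$ together with — and this is the new feature when the centers interact — points of $E_j$ lying over $Z_j \cap Z_{j'}$ for $j' \ne j$, where a second exceptional divisor is also being introduced. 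So the complement $\tilde U_\Sigma \setminus \psi(X)$ is contained in $\bigcup_j (\text{image of } E_j\text{ over } \Sing Z_j \cup \bigcup_{j'\ne j}(Z_j\cap Z_{j'}))$. Here the centers $Z_j = D_j \cap \bar V(f_j)$ are smooth (they are translates of subtori, since $f_j = (1+c_j z^{w_j})^{a_j}$ cuts out $\{z^{w_j} = -c_j^{-1}\}$, which is smooth in $T_{N/\ZZ v_j}$), so $\Sing Z_j = \emptyset$ and the complement is contained in $\bigcup_{j\ne j'} (\text{image of } E_j \text{ over } Z_j\cap Z_{j'})$.

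With that description, the two cases are immediate. For (1), if $\dim(Z_i \cap Z_j) < \dim Z_i$ for all $i \ne j$, then over $Z_i \cap Z_j$ the fiber of $E_i$ has dimension $\dim \TV(\Sigma) - 1 - \dim Z_i \le \dim \TV(\Sigma) - 1 - (\dim(Z_i\cap Z_j)+1)$, so the preimage of $Z_i \cap Z_j$ inside $E_i$ has dimension $\le \dim \TV(\Sigma) - 2$, i.e.\ codimension $\ge 2$ in $\tilde U_\Sigma$; taking the union over the finitely many pairs keeps codimension $\ge 2$, so $\psi$ is an isomorphism off a set of codimension $\ge 2$. For (2), if $Z_i \cap Z_j = \emptyset$ for all $i \ne j$, the complement is empty, so $\psi$ is surjective; since each chart $T_i$ maps as an open immersion (by construction of the lift) and the charts agree on overlaps with the opens $U_{ij}$ used to build $X$, $\psi$ is an open immersion, and $X$, being an open subscheme of the separated scheme $\tilde U_\Sigma$, is separated.

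The main obstacle I anticipate is not the codimension count but the bookkeeping in the first paragraph: making precise that the locally-defined lifts of the $T_i$ really do glue to a morphism out of the non-separated scheme $X$, i.e.\ that the transition maps $\varphi_{ij} = \mu_j \circ \mu_i^{-1}$ match the transition maps between the corresponding toric charts of $\widetilde{\TV}(\Sigma)$ on the \emph{full} open sets $U_{ij}$, not merely on the big torus. This requires unwinding, in coordinates adapted simultaneously to $v_i$ and $v_j$ (possible since one may choose a basis of $M$ pairing suitably with both primitive vectors, or reduce to the rank-two quotient), the explicit blow-up equations — essentially the computation already carried out in Lemmas \ref{twotoriglue} and \ref{elemtransformlemma}, now done relative to the larger fan $\Sigma$ and simultaneously at all centers $Z_i$.
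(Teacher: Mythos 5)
Your overall strategy (define chart maps $\iota_i:T_i\to\tilde U_\Sigma$, check compatibility with the $\varphi_{ij}$, then do a case analysis) is the same as the paper's, but your local analysis has two genuine problems. First, your description of what the charts cover is wrong even in the one-center model: in Lemma \ref{twotoriglue} the union of the two tori already misses a nonempty codimension-two locus (the points $u=x_1=0$, i.e.\ the intersection of the exceptional divisor with the proper transform of $\pi^{-1}(V(f))$), one point on each exceptional fibre over a \emph{smooth} point of the center. So your claim that in case (2) ``the complement is empty, so $\psi$ is surjective'' is false; $\psi$ is an open immersion but never surjective onto $\tilde U_\Sigma$. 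Second, and more seriously, the construction explicitly allows several $v_j$ to coincide, so several centers $Z_j$ can lie in the \emph{same} divisor $D_i$. In that case the local model of the blow-up near $D_i$ is $ux_1=v\prod_{j\in J}f_j$ (with $J=\{j: v_j=v_i\}$), and the chart map is $\iota_i:(x_1,\ldots,x_n)\mapsto\big((\prod_{j\in J\setminus\{i\}}f_j,\,x_1),(f_ix_1,x_2,\ldots,x_n)\big)$, which \emph{contracts} the locus $\{f_i=\prod_{j\in J\setminus\{i\}}f_j=0\}\subseteq T_i$. So your assertion that ``each chart $T_i$ maps as an open immersion (by construction of the lift)'' is exactly what fails in general; it holds only under hypothesis (2), and under hypothesis (1) one must check that the contracted (non-immersive) locus in the source has codimension $\ge 2$ — this is the whole point of the two hypotheses, and your target-side dimension count does not substitute for it.

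There is also a gap at the crux of case (2): knowing $\psi$ is a local isomorphism, you still need injectivity to get an open immersion, and this is precisely where separatedness of $X$ is at stake. Since $U_{ij}$ was \emph{defined} abstractly as the largest open where $\varphi_{ij}$ is an isomorphism onto its image, one must prove that whenever $\iota_i(x)=\iota_j(y)$ the abstract gluing really identifies $x$ and $y$; the paper does this by noting $\varphi_{ij}=\iota_j^{-1}\circ\iota_i$ as rational maps, so that $\iota_i(x)=\iota_j(y)$ forces $\varphi_{ij}$ to be a local isomorphism at $x$ with $\varphi_{ij}(x)=y$, hence $x\in U_{ij}$. Your sentence ``the charts agree on overlaps with the opens $U_{ij}$ used to build $X$'' gives well-definedness of $\psi$ (which, as you note, also follows since two morphisms from an integral scheme to a separated scheme agreeing on a dense open agree), but it does not give injectivity, which is the step that actually needs an argument.
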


\begin{proof}
This is just a slightly more involved version of the argument
of Lemma \ref{twotoriglue}. We first describe maps of the tori
$T_i$, $0\le i \le \ell$ into $\tilde U_{\Sigma}$. We have a canonical
identification of
$T_0$ with the big torus orbit $T_N$ of $\TV(\Sigma)$,
isomorphic to $\pi^{-1}(T_N)\subseteq \tilde U_{\Sigma}$. On the other
hand, for a given $i$, let $J$ be the set of indices such that
$v_j=v_i$ if and only if $j\in J$. Note  $\TV(\Sigma_{v_i,+})$ is an open
subset of $\TV(\Sigma)$. Using coordinates $x_1,\ldots,x_n$ on
$\TV(\Sigma_{v_i,+})$ as in the proof of Lemma \ref{twotoriglue},
we obtain an open subset of $\tTV(\Sigma)$ described as a subset of
$\TV(\Sigma_{v_i,+})\times\PP^1$ given by the equation $ux_1=v\prod_{j\in J}f_j$.
With this description, we define $\iota_i:T_i\rightarrow \tilde U_{\Sigma}$ by
\[
\iota_i:(x_1,\ldots,x_n)\mapsto\big((\prod_{j\in J\setminus \{i\}} f_j,x_1),
(f_ix_1,x_2,\ldots,x_n)\big).
\]
Note that $\iota_i$ contracts the locus $f_i=\prod_{j\in J\setminus\{i\}}f_j
=0$
in $T_i$ so this is not an embedding unless the $Z_j$ are disjoint.
In this coordinate chart, $\iota_0$ is given by
\[
\iota_0:(x_1,\ldots,x_n)\mapsto \big((\prod_{j\in J} f_j,x_1),(x_1,\ldots,x_n)
\big).
\]
From this one sees that $\iota_i\circ\mu_{i}=\iota_0$ on $U_{0i}$. In particular
the maps $\iota_i$, $0\le i \le n$ are compatible with the gluings
$\varphi_{ij}$, and hence we obtain the desired map $\psi$.

In the case (1), each $\iota_i$, $i\ge 1$, is an open immersion off of a 
codimension $\ge 2$ set, and as in Lemma \ref{twotoriglue}, it is easy
to see the image misses a codimension $\ge 2$ set.
In case (2), each $\iota_i$ is an open immersion. Thus $\psi$ is
a local isomorphism, and it is enough to
show $\psi$ is injective to see that it is an open immersion. Certainly
$\psi$ is injective on each $T_i$. If $x\in T_i$, $y\in T_j$ have
$\psi(x)=\psi(y)$, then $\iota_i(x)=\iota_j(y)$. Noting that
$\varphi_{ij}=\iota_j^{-1}\circ\iota_i$ as rational maps, we see
that $\varphi_{ij}$ is a local isomorphism at $x$ and $\varphi_{ij}(x)
=y$. Thus $x\in U_{ij}$ and $x$ and $y$ are identified by the gluing maps
so they give the same point in $X$.
\end{proof}

Next we understand the general setup for a mutation.

Given elements $v\in N$, $w\in M$ with $\langle w,v\rangle=0$,
define the piecewise linear transformation
\[
T_{v,w}:N_{\RR}\rightarrow N_{\RR},\quad
n\mapsto  
n+[\langle n,w\rangle]_- v
\]
Note this coincides with the tropicalization of $\mu_{(v,w)}$ in 
\eqref{of} as given in \eqref{mof}.

Now in the situation of this construction, let us impose one additional
restriction on the starting data $v_i,w_i$, namely,
\begin{equation}
\label{mutationassumption}
\langle w_i,v_j\rangle=0 \Leftrightarrow \langle w_j,v_i\rangle=0.
\end{equation}
Pick some index $k$ and let
\[
\Sigma_+=\Sigma\cup \{\RR_{\le 0}v_k\},
\]
and define $\Sigma_-$ by applying $T_{-v_k,a_kw_k}$ to each ray of
$\Sigma_+$. Let $D_{k,+}\subseteq \TV(\Sigma_+)$
be the divisor corresponding to $\RR_{\ge 0} v_k$ in $\Sigma_+$
and $D_{k,-}\subseteq \TV(\Sigma_-)$ be the divisor corresponding
to $\RR_{\le 0}v_k$ in $\Sigma_-$. For $j\not=k$, write $D_{j,\pm}$
for the divisor corresponding to $\RR_{\ge 0} v_j$ in $\Sigma_+$
or $\RR_{\ge 0} T_{-v_k,a_kw_k}(v_j)$ in $\Sigma_-$. Finally, we can 
set
\begin{align*}
Z_{j,+}=&\bar V(f_j)\cap D_{j,+}\\
Z_{j,-}=&
\begin{cases}
\bar V(f_j)\cap D_{j,-}& \hbox{if $\langle w_k,v_j\rangle \ge 0$}\\
\bar V\big((1+c_jc_k^{a_k\langle w_j,v_k\rangle}
z^{w_j+a_k\langle w_j,v_k\rangle w_k})^{a_j}\big)\cap D_{j,-}&
\hbox{if $\langle w_k,v_j\rangle \le 0$}.
\end{cases}
\end{align*}
Let $\tTV(\Sigma_\pm)$ be the blowups of $\TV(\Sigma_{\pm})$ at
this collection of subschemes.

\begin{lemma} 
\label{elemtransformlemma2}
\[
\mu_k=\mu_{f_k}:T_N\dasharrow T_N
\]
defines a birational map
\[
\mu_{k}: \tTV(\Sigma_+)\dasharrow\tTV(\Sigma_-).
\]
If $\dim \bar V(f_k)\cap Z_{j,+} < \dim Z_{j,+}$ whenever
$\langle w_k,v_j\rangle =0$, then this extension
is an isomorphism off of sets of codimension $\ge 2$.
\end{lemma}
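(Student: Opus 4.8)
The first assertion is immediate: $\mu_k=\mu_{f_k}\colon T_N\dasharrow T_N$ is a birational self-map, and every blow-up centre $Z_{j,+}$, $Z_{j,-}$ lies in the toric boundary, so $T_N$ is a dense open subscheme of both $\tTV(\Sigma_+)$ and $\tTV(\Sigma_-)$; hence $\mu_k$ induces a birational map $\tTV(\Sigma_+)\dasharrow\tTV(\Sigma_-)$. For the second assertion the plan is to verify, on an explicit open cover of $\TV(\Sigma_+)$, that $\mu_k$ and $\mu_k^{-1}$ are local isomorphisms away from codimension two; the content is concentrated in two regimes. The first is a neighbourhood of $D_{k,+}\cup D_{k,-}$, where the statement reduces to the elementary transformation Lemma~\ref{elemtransformlemma}; the second is the complement of $D_{k,+}\cup D_{k,-}$, handled by the toric bookkeeping in the proofs of Lemmas~\ref{twotoriglue} and~\ref{torusgluinglemma2}.

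For the first regime I would restrict to the open toric subvariety $\PP:=\TV(\Sigma_{v_k})\subseteq\TV(\Sigma_+)$, the $\PP^1$-bundle over $T_{N/\ZZ v_k}$ (a subfan inclusion, since the rays $\pm v_k$ both lie in $\Sigma_+$; and since $T_{-v_k,a_kw_k}$ fixes $\Sigma_{v_k}$, $\PP$ likewise sits inside $\TV(\Sigma_-)$). Assuming for the moment that $-v_k$ is not among the $v_i$, the only centres $Z_{j,+}$ meeting $\PP$ are those with $v_j=v_k$, and they all lie on $D_{k,+}$; so over $\PP$ the blow-up $\tTV(\Sigma_+)$ is $\Bl_{Z_{k,+}}\PP$ followed by blowing up the proper transforms of the remaining $Z_{j,+}$, while $\tTV(\Sigma_-)$ is $\Bl_{Z_{k,-}}\PP$ followed by the remaining $Z_{j,-}$, which now lie on the section $D_{j,-}$ (the ray $\RR_{\ge0}v_k$), disjoint from $Z_{k,-}\subseteq D_{k,-}$. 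Lemma~\ref{elemtransformlemma} provides the regular isomorphism $\mu_k\colon\Bl_{Z_{k,+}}\PP\xrightarrow{\sim}\Bl_{Z_{k,-}}\PP$. Since $v_j=v_k$ forces $\langle w_k,v_j\rangle=\langle w_k,v_k\rangle=0$, the hypothesis gives $\dim(\bar V(f_k)\cap Z_{j,+})<\dim Z_{j,+}$, so under this isomorphism the proper transform of $Z_{j,+}$ is carried onto that of $Z_{j,-}$ off codimension two; blowing up these remaining centres therefore preserves the isomorphism off codimension two. If instead $-v_k=v_i$ for some $i$, so that $\Sigma_+=\Sigma$, one runs the same argument replacing $D_{k,-}$ by the corresponding existing toric divisor.

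In the second regime we work over $\TV(\Sigma_+)\setminus(D_{k,+}\cup D_{k,-})$, whose rays are the $\RR_{\ge0}v_j$ with $v_j\neq\pm v_k$. Away from $\bar V(f_k)$, $\mu_k$ restricts on $T_N$ to translation by the section $t\mapsto v_k\otimes f_k(\pi(t))^{-1}$, and the routine — parallel to the proofs of Lemmas~\ref{twotoriglue} and~\ref{torusgluinglemma2} — is to describe $\mu_k$ in toric coordinates near each boundary divisor $D_{j,+}$ and check that it carries a neighbourhood of $D_{j,+}$ (minus $\bar V(f_k)$) isomorphically onto a neighbourhood of $D_{j,-}$ in $\TV(\Sigma_-)$, transporting $Z_{j,+}$ onto $Z_{j,-}$. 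Whether $\RR_{\ge0}v_j$ is fixed or moved by $T_{-v_k,a_kw_k}$ is governed by the sign of $\langle w_k,v_j\rangle$ (with assumption~\eqref{mutationassumption} ensuring this is symmetric in the roles of $j$ and $k$), and the two cases in the definition of $Z_{j,-}$ are exactly the two forms taken by $\overline{\mu_k(Z_{j,+})}$, read off from the initial term of $f_j\circ\mu_k^{-1}=(1+c_jz^{w_j}(1+c_kz^{w_k})^{a_k\langle w_j,v_k\rangle})^{a_j}$ along $D_{j,-}$. Granting this, in this regime $\mu_k$ (resp.\ $\mu_k^{-1}$) can fail to be a local isomorphism only where $\bar V(f_k)$ meets a boundary divisor or a centre $Z_{j,+}$ (resp.\ $Z_{j,-}$); the former is automatically of codimension $\ge2$, and the latter of codimension $\ge 2$ by the hypothesis $\dim(\bar V(f_k)\cap Z_{j,+})<\dim Z_{j,+}$ when $\langle w_k,v_j\rangle=0$. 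Combining the two regimes shows $\mu_k\colon\tTV(\Sigma_+)\dasharrow\tTV(\Sigma_-)$ is an isomorphism off codimension two, and the symmetric argument for $\mu_k^{-1}$ finishes the proof.

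The step I expect to be the genuine obstacle is this last one: pinning down $\overline{\mu_k(Z_{j,+})}$ by the initial-term computation and matching it to the two-case formula for $Z_{j,-}$, while keeping honest track of the non-separated gluing of the copies $T_i$ (so that ``neighbourhood of $D_{j,+}$'' really maps into $\TV(\Sigma_-)$ and not merely birationally) and of the possibility that several of the $v_i$ coincide or that $-v_k$ coincides with some $v_i$. Everything else is a controlled rerun of Lemmas~\ref{twotoriglue}--\ref{torusgluinglemma2}; the hypothesis $\dim(\bar V(f_k)\cap Z_{j,+})<\dim Z_{j,+}$ whenever $\langle w_k,v_j\rangle=0$ is precisely the input that pushes all the remaining discrepancies into codimension two.
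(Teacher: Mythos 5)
Your proposal is correct and follows essentially the same route as the paper: reduce a neighbourhood of the divisors for $\pm\RR_{\ge 0}v_k$ to Lemma \ref{elemtransformlemma}, check $\mu_k$ and $\mu_k^{-1}$ chart by chart on $U_\rho\setminus\bar V(f_k)$ for the other rays (the ``routine'' you defer is exactly the paper's computation that $\mu_k^*z^m$ is regular there for $m\in (T_{-v_k,a_kw_k}(\rho))^{\vee}$), match $\overline{\mu_k(Z_{j,+})}$ with the two-case formula for $Z_{j,-}$ via the restriction of $(\mu_k^{\pm 1})^*f_j$ to the boundary (using \eqref{mutationassumption} when $\langle w_k,v_j\rangle=0$), and use the stated hypothesis to control the locus over $\bar V(f_k)\cap Z_{j,+}$. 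One small point of bookkeeping: the hypothesis gives that $\bar V(f_k)\cap Z_{j,+}$ has codimension $\ge 3$ in $\TV(\Sigma_+)$, which is what makes its preimage (the relevant part of the exceptional divisor) have codimension $\ge 2$ in $\tTV(\Sigma_+)$ --- codimension $\ge 2$ downstairs, as your phrasing suggests, would not suffice.
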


\begin{proof}
We first analyze the map $\mu_k$ before blowing up the hypersurfaces
$Z_{j,+}$, $j\not =k$. So abusing notation, assume $\tTV(\Sigma_\pm)$
is just obtained by blowing up $Z_{k,\pm}$.
Off of a closed subset of codimension two,
we can cover $\tTV(\Sigma_+)$ with open sets, one isomorphic to
$\tilde\PP_+$ with $v=v_k$, and the remaining ones of the form 
$U_\rho\setminus \bar V(f_k)$. 
Here $\rho$ ranges over dimension one cones of $\Sigma_+$
not equal to $\RR_{\ge 0}v_k$ or $\RR_{\le 0}v_k$, and $U_{\rho}$
denotes the standard affine toric open subset of $\TV(\Sigma_+)$
corresponding to $\rho$. Denoting $D_{\rho} \subseteq U_{\rho}$ the toric
divisor, note that $D_{\rho}\cap \bar V(f_k)=\emptyset$
if $w_k$ is non-zero on $\rho$, as then either $z^{w_k}$ or $z^{-w_k}$ vanishes
on $D_{\rho}$ and $\bar V(1+z^{w_k})=\bar V(1+z^{-w_k})$. Thus we only fail to cover
codimension two subsets of the form $D_{\rho}\cap \bar V(f_k)$ such
that $w_k$ is zero on $\rho$. So for the purposes of describing
the extension of $\mu_k$ up to codimension two, it will be sufficient to
restrict to the open subset $U$ of $\tTV(\Sigma_+)$
covered by these open sets.

By Lemma \ref{elemtransformlemma},
$\mu_k$ gives a well-defined morphism on the open subset isomorphic to
$\tilde\PP_+$, so we need to check $\mu_k$ defines a morphism on each
of the remaining sets. If $\langle w_k,\rho\rangle \ge 0$, then 
for any $m\in \rho^{\vee}\cap M=(T_{-v_k,a_kw_k}(\rho))^{\vee}\cap M$, 
$\mu_k^*$ acts by
\[
z^m\mapsto z^mf_k^{-\langle m,v_k\rangle},
\]
taking a regular function to a regular function on $U_{\rho}\setminus
\bar V(f_k)$. If $\langle w_k,\rho\rangle
< 0$, then if $m\in (T_{-v_k,a_kw_k}(\rho))^{\vee}\cap M$ we see $\mu_k$
acts by
\[
z^m\mapsto z^m(1+c_kz^{w_k})^{-a_k\langle m,v_k\rangle}
=z^{m-a_k\langle m,v_k\rangle w_k}(c_k+z^{-w_k})^{-a_k\langle m,v_k\rangle}.
\]
But $m-a_k\langle m,v_k\rangle w_k\in \rho^{\vee}$ by definition of 
$T_{-v_k,a_kw_k}$, so this is again a regular function on $U_{\rho}\setminus 
\bar V(f_k)$.
This shows $\mu_k$ is a morphism on $U$;
to show it is an isomorphism onto its image, we repeat
the same process for $\mu_k^{-1}$.

To prove the result after blowing up the hypersurfaces $Z_{j,\pm}$,
first note that if $\langle w_k,v_j\rangle\not=0$, then $Z_{j,+}\subseteq U$,
and we need to show that $\mu_k(Z_{j,+})=Z_{j,-}$. This can be checked
in cases. If $\langle w_k,v_j\rangle\ge 0$, then $Z_{j,-}$ is defined
by the equation $f_j$ on $D_j$. Now if $\langle w_k,v_j\rangle >0$,
we have $f_k|_{D_j}=1$, so that
$\mu_k^*(f_j)|_{D_j}=f_j|_{D_j}$. 
If $\langle w_k,v_j\rangle=0$, then $\langle w_j,v_k\rangle
=0$ by Assumption \eqref{mutationassumption}, so that $\mu_k^*z^{w_j}=
z^{w_j}$, so again $\mu_k^*(f_j)=f_j$. If $\langle w_k,v_j
\rangle < 0$, then noting the definition of $Z_{j,-}$ in this case,
\begin{align*}
 & \mu_k^*((1+c_jc_k^{a_k\langle w_j,v_k\rangle} z^{w_j+a_k\langle w_j,v_k\rangle w_k})^{a_j})\\ 
= {} & (1+c_jc_k^{a_k\langle w_j,v_k\rangle} z^{w_j+a_k\langle w_j,v_k\rangle w_k}(1+c_kz^{w_k})^{-a_k
\langle w_j,v_k\rangle})^{a_j} \\
= {} & (1+c_jc_k^{a_k\langle w_j,v_k\rangle} z^{w_j}(c_k+z^{-w_k})^{-a_k\langle w_j,v_k\rangle})^{a_j}.
\end{align*}
However, $z^{-w_k}$ vanishes identically on $D_j$ in this case, so
restricting to $D_j$ this coincides with $f_j$.
This shows $\mu_k$ extends to a regular map after blowing up $U$ along the
$Z_{j,\pm}$ for those $j$ with $\langle w_k,v_j\rangle\not=0$.

Finally, if $\langle w_k,v_j\rangle=0$, then we do not necessarily
have $Z_{j,+}\subseteq U$, and if $\bar V(f_k)$ contains an irreducible
component of $Z_{j,+}$, the map $\mu_k$ need not extend as an isomorphism
across the exceptional divisor of the blowup of $Z_{j,+}$. Hence we
need to use the stated hypothesis, which implies that $Z_{j,+}\setminus U$
is codimension $\ge 3$. Since $\mu_k^*(f_j)=f_j$ when $\langle w_k,v_j\rangle
=0$, it then follows that $\mu_k$ extends to an isomorphism off of a set of
codimension $\ge 2$ in $\tTV(\Sigma_+)$.
\end{proof}
\end{construction}

\subsection{The $\shX$- and $\shA_{\prin}$-cluster varieties up to codimension two.}
\label{codimtwosection}

Since the ring of functions on a non-singular variety is determined off
a set of codimension two, we can study the $\shX$- and $\shA_{\prin}$-cluster
algebras by describing the corresponding varieties up to codimension two.

Suppose given fixed data as in \S\ref{clusterreviewsection}.
Let $\s$ be a seed. Consider the fans
\begin{align*}
\Sigma_{\s,\shA}:={} & \{0\}\cup \{\RR_{\ge 0}d_ie_i\,|\,i\in I_{\uf}\}\\
\Sigma_{\s,\shX}:={} & \{0\}\cup \{-\RR_{\ge 0}d_iv_i\,|\,i\in I_{\uf}\} 
\end{align*}
in $N^{\circ}$ and $M$ respectively. These define toric varieties
$\TV_{\s,\shA}$ and $\TV_{\s,\shX}$ respectively. We remark that the minus
signs in the definition of $\Sigma_{\s,\shX}$ are forced on us by
\eqref{Xseedtrop}.

Each one-dimensional ray in one of these fans corresponds to a toric
divisor, which we write as $D_i$ for $i\in I_{\uf}$ (not distinguishing
the $\cX$ and $\cA$ cases). For $i\in I_{\uf}$, we can define closed subschemes
\begin{align}
\label{ZAXdef}
\begin{split}
Z_{\shA,i}:= {} & D_i \cap \bar V(1+z^{v_i})\subseteq \TV_{\s,\shA},\\
Z_{\shX,i}:= {} & D_i \cap \bar 
V((1+z^{e_i})^{\ind d_iv_i})\subseteq \TV_{\s,\shX},
\end{split}
\end{align}
where $\ind d_iv_i$ denotes the greatest degree of divisibility of $d_iv_i$ in
$M$.
Let $(\tTV_{\s,\shA},D)$ and $(\tTV_{\s,\shX},D)$ be the pairs consisting
of the blow-ups of $\TV_{\s,\shA}$ and $\TV_{\s,\shX}$ along the closed
subschemes $Z_{\shA,i}$ and $Z_{\shX,i}$ respectively, with $D$ the
proper transform of the toric boundaries. 

We note that in the $\shA$ case the divisors $D_i$ are distinct and hence
the centers of the blow-ups are disjoint. In the $\shX$ case, however,
we might have $v_{i}$ and $v_{i'}$ being positively proportional to
each other, so that $D_i=D_{i'}$. Then the two centers $Z_{\shX,i}$,
$Z_{\shX,i'}$ may intersect. However, it is easy to see this intersection
occurs in higher codimension, i.e., 
$\dim Z_{\shX,i}\cap Z_{\shX,i'} < \dim Z_{\shX,i}$. 
Thus in the $\cX$ case we are in the situation of Lemma 
\ref{torusgluinglemma2}, (1)
and in the $\cA$ case we are in the situation of Lemma 
\ref{torusgluinglemma2}, (2).

Finally we define
\[
U_{\s,\shA}:=\tTV_{\s,\shA}\setminus D,\quad
U_{\s,\shX}:=\tTV_{\s,\shX}\setminus D.
\]
Clearly these varieties contain the seed tori $\shA_\s$ and $\shX_s$,
and hence given vertices $w,w'\in\foT$, we obtain
a birational
map $\mu_{w,w'}$ of seed tori inducing birational maps 
\[
\mu_{w,w'}:U_{\s_w,\shA}\dasharrow U_{\s_{w'},\shA},\quad
\mu_{w,w'}:U_{\s_w,\shX}\dasharrow U_{\s_{w'},\shX}.
\]

Since $\shA_{\prin}$ is defined to be a special case of the construction
of the $\shA$ cluster variety, we also obtain in the same way birational
maps
\[
\mu_{w,w'}:U_{\s_w,\shA_{\prin}}\dasharrow U_{\s_{w'},\shA_{\prin}}.
\]
In this case the projection $\widetilde N^{\circ}\rightarrow M$ projects
all rays of $\Sigma_{\s,\shA_{\prin}}$ to $0$, so we obtain a morphism
$\TV_{\s, \shA_{\prin}}\rightarrow T_M$. The fibres of this map are
(non-canonically) isomorphic to $\TV_{\s,\shA}$. After blowing up the centers
$Z_{\shA_{\prin},i}$, we get morphisms $\pi:U_{\s,\shA_{\prin}}\rightarrow
T_M$ which commute with the mutations $\mu_{w,w'}$. Write a fibre of
$\pi$ over $t\in T_M$ as $U_{\s,\shA_t}$. We then obtain birational
maps on fibres of $\pi$ over $t$:
\[
\mu_{w,w'}:U_{\s_w,\shA_{t}}\dasharrow U_{\s_{w'},\shA_t}.
\]

We recall from \cite{BFZ05}:

\begin{definition}
A seed $\s$ is \emph{coprime} if, writing \eqref{Amutation} as $A_k\cdot
\mu_k^* A_k'=P_k$, the $P_k$, $k\in I_{\uf}$, are pairwise coprime.
We say a seed $\s$ is \emph{totally coprime} if all seeds obtained by
repeated mutations of $\s$ are coprime.
\end{definition}

We then have

\begin{lemma}
\label{keylemma}
Let $U'_{\s,\cA} \subset \cA$ (resp. $U'_{\s,\cX} \subset \cX$) be the union of the tori $\cA_{\s}$ (resp. $\cX_{\s}$)
and $\cA_{\mu_i(\s)}$ (resp. $\cX_{\mu_i(\s)}$), $i \in I_{\uf}$.

\begin{enumerate}
\item
For $k\in I_{\uf}$, with $w'=\mu_k(w)$, the maps
\[
\mu_{w,w'}:U_{\s_w,\shX}\dasharrow U_{\s_{w'},\shX},\quad
\mu_{w,w'}:U_{\s_w,\shA_{\prin}}\dasharrow U_{\s_{w'},\shA_{\prin}}
\]
are isomorphisms outside codimension two.
\item 
$\mu_{w,w'}:U_{\s_w,\shA}\dasharrow U_{\s_{w'},\shA}$
is an isomorphism outside codimension two if the seed $\s_w$ is coprime.
\item
$\mu_{w,w'}:U_{\s_w,\shA_t}\dasharrow U_{\s_{w'},\shA_t}$ is an isomorphism
outside of codimension two for $t\in T_M$ general (i.e., $t$ contained in
some non-empty Zariski open subset).
\item $U'_{\s,\cA} \dasharrow U_{\s,\cA}$ is an open immersion with image an open subset whose complement
has codimension at least two.
\item $U'_{\s,\cX} \dasharrow U_{\s,\cX}$ is an isomorphism outside of codimension two. 
\end{enumerate}
\end{lemma}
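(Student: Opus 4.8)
The plan is to recognize statement (5) as an instance of Construction~\ref{basicconstruction} and Lemma~\ref{torusgluinglemma2}(1). Run that construction with the role of the lattice $N$ played by $M$, so that the ambient torus is the $\cX$-torus $T_M=\Spec\kk[N]$, and with fan $\Sigma=\Sigma_{\s,\cX}$: its rays are $\RR_{\ge 0}\bar v_i=-\RR_{\ge 0}d_iv_i$, $i\in I_{\uf}$, where $\bar v_i$ denotes the primitive generator (one discards the $i$ with $v_i=0$; these give trivial mutations and no ray). Since $\langle\bar v_i,e_i\rangle$ is a multiple of $\{e_i,e_i\}=0$, we may take, in the notation of Construction~\ref{basicconstruction}, $w_i=e_i\in N$, $a_i=\ind(d_iv_i)$ and $c_i=1$, so that $f_i=(1+z^{e_i})^{\ind(d_iv_i)}$. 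Then $Z_i=D_i\cap\bar V(f_i)$ is exactly the center $Z_{\cX,i}$ of~\eqref{ZAXdef}, and since the toric boundary of $\TV_{\s,\cX}$ is $\bigcup_iD_i$, the scheme $\tilde U_\Sigma=\tTV(\Sigma)\setminus\bigcup_i\tilde D_i$ is precisely $U_{\s,\cX}=\tTV_{\s,\cX}\setminus D$.

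Next I would identify the scheme $X$ built in Construction~\ref{basicconstruction} with $U'_{\s,\cX}$. The key observation is that $\cX_{\s}$ and every $\cX_{\mu_i(\s)}$ are literally the same torus $T_M=\Spec\kk[N]$, the seeds only fixing coordinates; so it suffices to check that, as a birational self-map of $T_M$, the $\cX$-mutation $\mu_i\colon\cX_{\s}\dasharrow\cX_{\mu_i(\s)}$ of~\eqref{Xmutationeq} equals the map $\mu_i=\mu_{f_i}$ of Construction~\ref{basicconstruction}. This is the one genuine computation: since $[n,e_i]_{\s}=\{n,d_ie_i\}=-\langle d_iv_i,n\rangle$, formula~\eqref{Xmutationeq} reads $\mu_i^*z^n=z^n(1+z^{e_i})^{\langle d_iv_i,n\rangle}$, while $\mu_{f_i}^*z^n=z^n f_i^{-\langle\bar v_i,n\rangle}=z^n(1+z^{e_i})^{\langle d_iv_i,n\rangle}$ because $\bar v_i=-d_iv_i/\ind(d_iv_i)$, so the two agree. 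Hence the maximal opens on which $\mu_i$ is an isomorphism are the sets $T_0\setminus V(f_i)$ of Construction~\ref{basicconstruction}, and for $i\ne j$ the gluing inside $\cX$ of $\cX_{\mu_i(\s)}$ and $\cX_{\mu_j(\s)}$, which is $\mu_{\mu_i(\s),\mu_j(\s)}=\mu_j\circ\mu_i^{-1}$ (the $\foT$-path between these vertices passes through the root $\s$), is exactly $\varphi_{ij}$. By uniqueness of the gluing, Proposition~\ref{gluingconstruction}, $X\cong U'_{\s,\cX}$, and the morphism $\psi\colon X\to\tilde U_\Sigma$ of Lemma~\ref{torusgluinglemma2}, being the identity on the common torus $\cX_\s$, is the birational map $U'_{\s,\cX}\dasharrow U_{\s,\cX}$ in the statement.

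It remains to check that we are in case~(1) of Lemma~\ref{torusgluinglemma2}, i.e.\ $\dim(Z_{\cX,i}\cap Z_{\cX,j})<\dim Z_{\cX,i}$ for $i\ne j$. If $\bar v_i,\bar v_j$ span distinct rays then $D_i\cap D_j=\emptyset$ (the fan $\Sigma_{\s,\cX}$ has no cone of dimension $\ge 2$), so the intersection is empty. If $\bar v_i,\bar v_j$ span the same ray, then $v_i,v_j$ are positively proportional, forcing $\{e_i,e_j\}=0$; then $e_i$ and $e_j$ both lie in $\bar v_i^{\perp}\cap N$, and on the torus orbit of $D_i=D_j$ they are characters of the \emph{distinct}, hence linearly independent, basis vectors $e_i\ne e_j$ of $N$, so $V(1+z^{e_i})$ and $V(1+z^{e_j})$ meet there in codimension two. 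Lemma~\ref{torusgluinglemma2}(1) then gives that $U'_{\s,\cX}\dasharrow U_{\s,\cX}$ is an isomorphism outside a subset of codimension $\ge 2$, as required.

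The only part of this plan that is not formal manipulation is the identification in the second paragraph: one must reconcile the cluster-theoretic $\cX$-mutation, with its sign flip $X_k\mapsto X_k^{-1}$ and accompanying change of seed, with the purely geometric elementary transformation $\mu_{f_k}$ on the fixed abstract torus $T_M$, while keeping track of the divisibility indices $\ind(d_iv_i)$. Once that bookkeeping is done, the rest follows directly from Construction~\ref{basicconstruction} and Lemma~\ref{torusgluinglemma2}.
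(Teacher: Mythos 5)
Your argument for part (5) is correct and coincides with the paper's own route: the same specialization of Construction \ref{basicconstruction} (taking the vectors to be $-d_iv_i/\ind(d_iv_i)\in M$, $w_i=e_i$, $a_i=\ind(d_iv_i)$, $c_i=1$, so that the centers are the $Z_{\shX,i}$ of \eqref{ZAXdef}), the same identification of the glued scheme with $U'_{\s,\cX}$ and of $\tilde U_{\Sigma}$ with $U_{\s,\cX}$, and the same observation that coincident divisors $D_i=D_j$ force $\{e_i,e_j\}=0$ so the centers meet in higher codimension, which puts you in case (1) of Lemma \ref{torusgluinglemma2}.

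The genuine gap is that the lemma has five parts and your proposal proves only (5). Parts (1)--(3), which carry the content needed for Theorem \ref{maingeometrictheorem} and hence for the Laurent phenomenon, are not consequences of Lemma \ref{torusgluinglemma2} at all: they compare the blown-up toric models attached to two \emph{different} seeds $\s_w$ and $\s_{w'}=\mu_k(\s_w)$, and the paper deduces them from Lemma \ref{elemtransformlemma2}. That requires two verifications absent from your plan: (i) that the tropicalized mutation $T_{d_kv_k,e_k}$ (resp.\ $T_{(-d_ke_k,0),(v_k,e_k)}$, $T_{-d_ke_k,v_k}$) carries the rays of $\Sigma_{\s_w,\shX}$ (resp.\ $\Sigma_{\s_w,\shA_{\prin}}$, $\Sigma_{\s_w,\shA}$) to the rays of the fan of the mutated seed, which is exactly the content of \eqref{etransform} combined with the sign bookkeeping of Remark \ref{tropicalmutremark}; and (ii) the codimension hypothesis of Lemma \ref{elemtransformlemma2}, namely $\dim \bar V(f_k)\cap Z_{j,+}<\dim Z_{j,+}$ whenever $\langle w_k,v_j\rangle=0$, which is precisely where the coprimality assumption of (2) enters (for $\shA$, $f_k=1+z^{v_k}$ agrees with $P_k$ up to a monomial), where its automatic validity for $\shA_{\prin}$ in (1) comes from the linear independence of the $(v_k,e_k)$, and where the genericity of $t$ in (3) is used (there one must also change the constants to $c_k=z^{e_k}(t)$, so the $c_i=1$ normalization of your setup does not cover this case). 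Part (4) is likewise missing, and it is not the same statement as (5): in the $\shA$ case the rays $\RR_{\ge 0}d_ie_i$ are distinct, so the centers $Z_{\shA,i}$ are disjoint and Lemma \ref{torusgluinglemma2}(2) gives the stronger conclusion of an open immersion whose complement has codimension at least two. As written, the proposal establishes only (5) and leaves (1)--(4) unproved.
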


\begin{proof}
These are all special cases of Construction \ref{basicconstruction}. 
For (1) and (2), in
the $\cX$ (resp.\ $\cA_{\prin}$, $\cA$) case, we take the vectors
$v_i$ to be $-d_iv_i/\ind(d_iv_i)\in M$ 
(resp.\ $(d_ie_i,0)\in\widetilde N^{\circ}$,
$d_ie_i\in N^{\circ}$) for $i\in I_{\uf}$, the vectors $w_i$ to be $e_i\in N$
(resp.\ $(v_i,e_i)\in \widetilde M^{\circ}$, $v_i\in M^{\circ}$). In
all these cases, the constants $c_i$ are taken to be $1$. The integers
$a_i$ are taken to be $a_i=\ind(d_iv_i)$ (resp.\ $a_i=1$). In all three
cases, the cluster mutation $\mu_k$ coincides with the $\mu_k$
as defined in Construction \ref{basicconstruction}. In the notation of
Lemma \ref{elemtransformlemma2}, taking $\Sigma_+=\Sigma_{\s_w,\shX}$ 
(resp.\ $\Sigma_{\s_w,\shA_{\prin}}$, $\Sigma_{\s_w,\shA}$),
we observe that $T_{d_kv_k,e_k}$ (resp.\ 
$T_{(-d_ke_k,0),(v_k,e_k)}$,
$T_{-d_ke_k,v_k}$) applied to the rays of $\Sigma_+$
gives $\Sigma_-:=\Sigma_{\s_{w'},\shX}$, (resp.\
$\Sigma_{\s_{w'},\shA_{\prin}}$, $\Sigma_{\s_{w'},\shA}$) 
as follows immediately from \eqref{etransform} and Remark 
\ref{tropicalmutremark}.

We now only need to check the hypothesis of Lemma \ref{elemtransformlemma2}
to see that $\mu_{w,w'}$ is an isomorphism off codimension two
subsets. In the $\cX$ case, $f_k=1+z^{e_k}$, and from this the condition
is easily checked. In the $\cA$ case, $f_k=1+z^{v_k}$, which coincides with
$P_k$ up to a monomial factor. The hypothesis then follows from the coprime
condition, and the principal coefficient case is automatically coprime
as the $(v_k,e_k)$, $k\in I_{\uf}$ are linearly independent.

The $\shA_t$ case (3) is similar to the $\shA$ case, except that now
$f_k=1+z^{e_k}(t)\cdot z^{v_k}$, so we take $c_k=z^{e_k}(t)$. If $t$
is chosen generally, then the hypothesis of Lemma \ref{elemtransformlemma2}
continues to hold.

(5) follows from part (1) of Lemma \ref{torusgluinglemma2}. 
(4) follows from part (2) of  Lemma \ref{torusgluinglemma2}. 
\end{proof}

The main result in this section is then:

\begin{theorem}
\label{maingeometrictheorem}
Let $w,w'$ be vertices in $\foT$.
\begin{enumerate}
\item
The induced birational maps
\begin{align*}
U_{\s_w,\shX}\smash{\mathop{\dasharrow}\limits^{\mu_{w,w'}} }
U_{\s_{w'},\shX}\dasharrow& \shX^{\ft}\\
U_{\s_w,\shA_{\prin}}\smash{\mathop{\dasharrow}\limits^{\mu_{w,w'}} }
U_{\s_{w'},\shA_{\prin}}\dasharrow& \shA_{\prin}^{\ft}
\end{align*}
are isomorphisms outside of codimension two. (See Remark \ref{Noetherianremark}
for $\shX^{\ft}$, $\shA^{\ft}$. We use a finite subtree of $\foT$ containing
both $w$ and $w'$.) 
\item
If the initial seed is totally coprime, then
\[
U_{\s_w,\shA}\smash{\mathop{\dasharrow}\limits^{\mu_{w,w'}} }
U_{\s_{w'},\shA}\dasharrow \shA^{\ft}
\]
is an isomorphism outside a codimension two set. 
\item
If $t\in T_M$ is very general (outside a countable union of proper closed
subsets), then 
\[
U_{\s_w,\shA_t}\smash{\mathop{\dasharrow}\limits^{\mu_{w,w'}} }
U_{\s_{w'},\shA_t}\dasharrow \shA_t^{\ft}
\]
is an isomorphism outside a codimension two set.
\end{enumerate}
In particular, as
all schemes involved are $S_2$, these maps
induce isomorphisms on rings of regular functions.

\end{theorem}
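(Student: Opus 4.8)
The plan is to reduce the whole statement to Lemma \ref{keylemma} together with two soft facts: that ``isomorphism away from codimension two'' is stable under composition of birational maps, and that on a normal Noetherian scheme the ring of global functions is insensitive to removing a closed set of codimension $\ge 2$. I describe the argument for the $\shX$ case in detail; the $\shA_{\prin}$, $\shA$ (under total coprimality) and $\shA_t$ (for suitably general $t$) cases follow the identical script, invoking the corresponding parts of Lemma \ref{keylemma}, and in the $\shA_t$ case restricting the $\shA_{\prin}$ statement to a general fibre of $\pi$.

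First I would record the composition fact: if $f\colon X\dasharrow Y$ and $g\colon Y\dasharrow Z$ are birational maps of integral normal Noetherian $\kk$-schemes, each an isomorphism between open subsets whose complements in source and target have codimension $\ge 2$, then so is $g\circ f$. Indeed, writing $f\colon V\xrightarrow{\sim}V'$ and $g\colon W\xrightarrow{\sim}W'$, the set $V\cap f^{-1}(W)=(f|_V)^{-1}(V'\cap W')$ is open in $X$ with complement of codimension $\ge 2$ (since $V'\cap W'$ has complement of codimension $\ge 2$ in $V'$ and $f|_V$ is an isomorphism), and on it $g\circ f$ is an isomorphism whose image has, by the symmetric computation, complement of codimension $\ge 2$ in $Z$. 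All schemes occurring are normal and of finite type over $\kk$, hence Noetherian: the $U_{\s,\cdot}$ are blowups of smooth toric varieties along smooth centres with a divisor removed, hence smooth, and $\shX^{\ft},\shA^{\ft}_{\prin},\dots$ are glued from tori along open subsets; for $\shA_t$ and $U_{\s,\shA_t}$ the smoothness of the general fibre uses generic smoothness in characteristic zero.

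Now let $\gamma$ be the path in $\foT$ from $w$ to $w'$. Each edge is a single mutation $\mu_k$, to which Lemma \ref{keylemma}(1) applies, so composing along $\gamma$ gives that $\mu_{w,w'}\colon U_{\s_w,\shX}\dasharrow U_{\s_{w'},\shX}$ is an isomorphism away from codimension two. For the remaining arrow $U_{\s_{w'},\shX}\dasharrow\shX^{\ft}$, enlarge the finite subtree $\foT'$ (always possible, cf.\ Remark \ref{Noetherianremark}) to contain $w'$ and all its neighbours. By Lemma \ref{keylemma}(5), the natural map $U'_{\s_{w'},\shX}=\shX_{\s_{w'}}\cup\bigcup_{i\in I_{\uf}}\shX_{\mu_i(\s_{w'})}\dasharrow U_{\s_{w'},\shX}$ is an isomorphism away from codimension two, and its source is an open subscheme of $\shX^{\ft}$ (the union of the seed tori indexed by $w'$ and its neighbours, all in $\foT'$); so $U_{\s_{w'},\shX}$ is identified up to codimension two with an open of $\shX^{\ft}$. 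To see the complement of that open in $\shX^{\ft}$ has codimension $\ge 2$ — equivalently, that every seed torus $\shX_{\s_u}$, $u\in\foT'$, is reached up to codimension two — apply the already-established isomorphisms $\mu_{w',u}$ (composites of single mutations): the preimage of $\shX_{\s_u}$ is an open of $U_{\s_{w'},\shX}$ mapping isomorphically onto an open of $\shX_{\s_u}$ with complement of codimension $\ge 2$. These partial identifications are compatible (all induced by the common transition maps $\mu_{u,u'}$), hence patch to show $U_{\s_{w'},\shX}\dasharrow\shX^{\ft}$ is an isomorphism away from codimension two; composing with $\mu_{w,w'}$ proves (1). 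Parts (2) and (3) follow verbatim with Lemma \ref{keylemma}(2) resp.\ (3) in place of (1) and (4) in place of (5), and with $\shA_t$ obtained from $\shA_{\prin}$ by restriction to a general fibre.

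Finally, since each scheme in sight is normal and Noetherian, it satisfies Serre's condition $S_2$, so for any such $X$ and open $U\subseteq X$ with $\codim(X\setminus U)\ge 2$ the restriction $\Gamma(X,\O_X)\to\Gamma(U,\O_U)$ is bijective; applying this at both ends of the maps in (1)--(3) — which genuinely are isomorphisms between such big opens — yields the asserted isomorphisms on rings of regular functions. I expect the real work to lie in the patching of the previous paragraph: ensuring that as one glues the finitely many local isomorphisms into a single birational map to $\shX^{\ft}$ one never loses codimension-two control of the complement, which is exactly where parts (4)--(5) of Lemma \ref{keylemma} and the enlargement of $\foT'$ to include the relevant neighbouring vertices are indispensable. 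A secondary technical point is the $\shA_t$ case, where one must take $t$ outside a countable union of proper closed subsets — one condition per edge of $\foT$ from Lemma \ref{keylemma}(3), plus one guaranteeing smoothness of the fibre — which is the source of the phrase ``very general'' in the statement.
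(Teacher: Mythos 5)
Your proposal is correct and follows essentially the same route as the paper's proof: establish that $\mu_{w,w'}$ is an isomorphism away from codimension two by composing the single-mutation statements of Lemma \ref{keylemma}, identify each $U_{\s,\shX}$ (resp.\ $U_{\s,\shA_{\prin}}$, etc.) up to codimension two with the union of its seed torus and the adjacent seed tori sitting inside a suitably enlarged $\shX^{\ft}$ via Lemma \ref{keylemma} (4)--(5) (i.e.\ Lemma \ref{torusgluinglemma2}), patch the finitely many resulting partial identifications to cover $\shX^{\ft}$ up to codimension two, and conclude on rings of functions using $S_2$. One small correction: your claim that the $U_{\s,\shX}$ are smooth is not right in general, since the centres $Z_{\shX,i}$ can be non-reduced when $\ind d_iv_i>1$; however, these blowups are locally hypersurfaces in smooth varieties with singular locus of codimension two, hence still normal and $S_2$, which is all your final step (and the paper's, which invokes only $S_2$) actually requires.
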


\begin{proof}
That the maps $\mu_{w,w'}$ are isomorphisms outside of codimension
two follows from Lemma \ref{keylemma}. For the remaining statements in (1-3) 
consider just the $\shX$ case, as the other cases are identical.
By Lemma \ref{torusgluinglemma2}, each of the $U_{\s,\shX}$ 
is isomorphic, outside of codimension two, to the gluing of the seed torus
$\shX_{\s}$ to its adjacent seed tori $\shX_{\mu_k(\s)}$, $k\in I_{\uf}$.
This gives a birational map $U_{\s,\shX}\dasharrow \shX^{\ft}\subseteq
\shX$. (Here we use any choice of regular subtree of 
$\foT$ containing the vertex
corresponding to $\s$ and its adjacent vertices. The subtree is taken to
be finite but as large as we would like.) Since $\shX^{\ft}$ is
covered, up to codimension two subsets, by some finite collection
$\{U_{\s_w,\shX}\}$ we see each
$U_{\s_w,\shX}$ is isomorphic to $\shX^{\ft}$ off a codimension two
subset. We need to use $\shX^{\ft}$ rather than $\shX$, for if $\shX$
is not Noetherian, the subset of $\shX$ we fail to cover need not be
closed.

\end{proof}

\begin{remark} More generally than the principal coefficient
case, the totally coprime hypothesis 
also holds if the matrix $(\epsilon_{ij})_{i\in I_{\uf}, 1\le j\le n}$ has
full rank. See \cite{BFZ05}, Proposition 1.8. Of course, this holds in
particular for the principal coefficient case.
\end{remark}

We immediately obtain from this a geometric
explanation for the well-known Laurent phenomenon:

\begin{corollary} [The Laurent phenomenon]
\label{lpcor}
For a seed $\s$, let $q \in M^{\circ}$ (resp.\ $q \in N$) have non-negative pairing with
each $e_i$ (resp.\ each $-v_i$) for $i\in I_{\uf}$. 
Equivalently, $z^q$ is a monomial which is a regular function
on the toric variety $\TV_{\s,\shA}$ (resp.\ $\TV_{\s,\shX}$). Then $z^q$ 
is a Laurent polynomial on every seed torus, i.e.,
$z^q\in H^0(\shA,\shO_{\shA})$ (resp. $z^q \in H^0(\shX,\shO_{\shX})$).
\end{corollary}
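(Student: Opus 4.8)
\emph{Plan.} The plan is to deduce the corollary formally from Theorem~\ref{maingeometrictheorem}, using the elementary fact that a monomial which is regular on an affine toric variety pulls back to a regular function on any blow-up of it. For $\shX$ and $\shA_{\prin}$ this will be immediate; for the unrestricted $\shA$ case — which is not covered by Theorem~\ref{maingeometrictheorem}(2) unless the seed is totally coprime — I would reduce to the principal-coefficient case via the projection $\pi\colon\shA_{\prin}\to T_M$.

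First I would dispose of the ``equivalently'' clause. The fan of $\TV_{\s,\shA}$ (resp.\ $\TV_{\s,\shX}$) consists of the single rays through $d_ie_i$ (resp.\ $-d_iv_i$), $i\in I_{\uf}$, so $z^q$ is a regular function on it exactly when $\langle d_ie_i,q\rangle\ge 0$ (resp.\ $\langle q,-d_iv_i\rangle\ge 0$) for all $i\in I_{\uf}$, which, since the $d_i$ are positive, is the stated hypothesis. Granting this, I would pull $z^q$ back along the blow-up morphism $\tTV_{\s,\shA}\to\TV_{\s,\shA}$ (resp.\ $\tTV_{\s,\shX}\to\TV_{\s,\shX}$) and restrict to the complement of the proper transform of the boundary, exhibiting $z^q$ as an element of $H^0(U_{\s,\shA},\shO)$ (resp.\ $H^0(U_{\s,\shX},\shO)$).

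Next, for the $\shX$ case (and, verbatim, $\shA_{\prin}$, which is always totally coprime): I would fix a finite connected regular subtree $\foT'\subseteq\foT$ containing the vertex carrying $\s$ together with its neighbours, with associated open subscheme $\shX^{\ft}\subseteq\shX$ (Remark~\ref{Noetherianremark}). By Theorem~\ref{maingeometrictheorem}(1) applied with $w=w'$ equal to that vertex, $U_{\s,\shX}\dasharrow\shX^{\ft}$ is an isomorphism outside codimension two, hence — all schemes being $S_2$ — induces an isomorphism $H^0(U_{\s,\shX},\shO)\cong H^0(\shX^{\ft},\shO)$ of subrings of the common function field; being the identity on the dense seed torus $\shX_\s$, it carries our $z^q$ to the rational function $z^q$. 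So $z^q\in H^0(\shX^{\ft},\shO)$ for every such $\foT'$, and since every seed torus lies in some $\shX^{\ft}$ and regularity is local, $z^q\in H^0(\shX,\shO_\shX)$; likewise $z^q\in H^0(\shA_{\prin},\shO_{\shA_{\prin}})$.

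Finally, the general $\shA$ case: given $q\in M^\circ$ with $\langle d_ie_i,q\rangle\ge 0$ for $i\in I_{\uf}$, I would set $\tilde q=(q,0)\in\widetilde M^\circ=M^\circ\oplus N$ and note that the rays of $\Sigma_{\s,\shA_{\prin}}$ pass through the $d_i(e_i,0)$, with $\langle d_i(e_i,0),\tilde q\rangle=\langle d_ie_i,q\rangle\ge 0$; hence $z^{\tilde q}$ is regular on $\TV_{\s,\shA_{\prin}}$, so $z^{\tilde q}\in H^0(\shA_{\prin},\shO_{\shA_{\prin}})$ by the previous step. Restricting to the closed subscheme $\shA=\pi^{-1}(e)$ of \eqref{Aprincipalmap}, whose intersection with the seed chart $\shA_{\prin,\s}$ is the seed torus $\shA_\s$, cut out by $z^{(0,n)}=1$ and on which $z^{\tilde q}$ restricts to $z^q$, I would conclude that $z^q$ extends regularly over $\shA$, i.e.\ $z^q\in H^0(\shA,\shO_\shA)$. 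I expect no real difficulty beyond Theorem~\ref{maingeometrictheorem} itself (and, behind it, Lemmas~\ref{torusgluinglemma2} and~\ref{elemtransformlemma2}); the only points needing genuine care are the possible non-Noetherianity of $\shX$, which forces working on the finite pieces $\shX^{\ft}$ and then patching, and the fact that the unrestricted $\shA$ case must be routed through $\shA_{\prin}$, where total coprimality is automatic.
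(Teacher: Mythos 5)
Your argument is correct and follows the paper's own proof in all essentials: pull $z^q$ back to $U_{\s,\shX}$ (resp.\ $U_{\s,\shA_{\prin}}$), invoke Theorem~\ref{maingeometrictheorem} to transfer it to $\shX^{\ft}$ (resp.\ $\shA_{\prin}^{\ft}$) for every finite subtree, and deduce the unrestricted $\shA$ case from $\shA_{\prin}$ via the fibre $\shA=\pi^{-1}(e)$, which is exactly the paper's specialization $z^{(0,e_i)}=1$ phrased geometrically.
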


\begin{proof}
By assumption $z^q$ is a regular function on $\TV_{\s,\shA}$ (or
$\TV_{\s,\shX}$), and hence pulls back and restricts to a regular function
on $U_{\s,\shA}$ (resp.\ $U_{\s,\shX}$). In the $\cX$ case, 
the result then follows from Theorem \ref{maingeometrictheorem}, since
then $z^q$ also defines a regular function on $\shX^{\ft}$ for any choice
of subtree of $\foT$, and hence also defines a regular function on $\shX$.

The $\shA$ case then follows from the $\shA_{\prin}$ case, since the
mutation formula \eqref{Amutationeq} 
for $\shA$ is obtained from that for $\shA_{\prin}$ by setting
$z^{(0,e_i)}=1$ for $(e_1,\ldots,e_n)$ the initial seed.
\end{proof}

\begin{remark} Note that in the $\cA$ case, with no frozen variables, (i.e.,
$I_{\uf}=I$) the condition on $q$ is exactly that
$q$ is in the non-negative span of the $e_i^*$, i.e., 
that $z^q$ is a monomial, with
non-negative exponents, in the cluster variables of the seed. 
In particular, this
applies to any cluster variable, in which case the statement gives the usual 
Laurent phenomenon.
From this point of view the difference between $\cA$ and $\cX$ is that the 
fan $\Sigma_{\s,\cA}$
always looks the same (it is the union of coordinate rays), and in particular 
$\TV_{\s, \cA}$ has lots of global functions (this is a toric open subset of $\bA^n$), while
$\Sigma_{\s,\cX}$ can be any arbitrary collection of rays, and 
$\TV_{\s, \cX}$ has non-constant global functions if and only if
all these rays lie in a common half space. 
\end{remark}

\begin{remark} \label{ourscrewup} 
By \cite{BFZ05}, Def.\ 1.1, the algebra $H^0(U'_{\s,\cA},\cO_{U'_{\s,\cA}})
= H^0(U_{\s,\cA},\cO_{U_{\s,\cA}})$ (see part (4) of Lemma \ref{keylemma})
is the {\it upper bound}. In an earlier version of this paper we claimed Theorem
\ref{maingeometrictheorem} for $\cA$ (without any coprimality assumption), which would in particular imply
the upper bound is equal to the upper cluster algebra. But Greg Muller set
us straight, by giving us an example where the upper cluster algebra is strictly smaller than the upper bound.
\end{remark}

We learned the following theorem, and its proof, from M.\ Shapiro:

\begin{theorem}
\label{separatedtheorem}
The canonical map $\iota:\shA\rightarrow \Spec A^{\up}$ is an open immersion,
where $A^{\up}
=\Gamma(\shA,\shO_{\shA})$ is the upper cluster algebra. In particular,
$\shA$ is separated.
\end{theorem}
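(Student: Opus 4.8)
The plan is to identify $A^{\up}$ with the ring of regular functions on a variety that contains $\shA$ as an open subset with complement of codimension $\geq 2$, and then invoke the $S_2$ property to conclude that $\iota$ is an open immersion. Concretely, for each vertex $w \in \foT$ we have the variety $U_{\s_w,\shA}$ of \S\ref{codimtwosection}, a blowup of the toric variety $\TV_{\s_w,\shA}$ along the disjoint centers $Z_{\shA,i}$, with the seed torus $\shA_{\s_w}$ as a dense open subset; by part (4) of Lemma \ref{keylemma} the complement $U_{\s_w,\shA} \setminus U'_{\s_w,\shA}$ has codimension $\geq 2$, so $H^0(U_{\s_w,\shA},\shO) = H^0(U'_{\s_w,\shA},\shO)$, and by \cite{BFZ05}, Def.~1.1 this latter ring is the \emph{upper bound} $\bar A(\s_w)$. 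The upper cluster algebra is $A^{\up} = \bigcap_w \bar A(\s_w)$, the intersection taken inside the common fraction field $k(\shA_\s)$.

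First I would set up the comparison carefully: $\shA = \bigcup_w \shA_{\s_w}$ is covered by the seed tori, and $A^{\up} = \Gamma(\shA,\shO_\shA) = \bigcap_w \kk[M^\circ]_{\s_w}$ is by definition the ring of universal Laurent polynomials, which equals $\bigcap_w \bar A(\s_w)$ since each $\bar A(\s_w)$ is computed from the torus $\shA_{\s_w}$ together with its neighbors $\shA_{\mu_i(\s_w)}$. Since $U'_{\s_w,\shA} \subset \shA$ is an open subset whose complement in $\shA$ (restricted to the relevant finite piece) has codimension $\geq 2$, and $\shA$ is integral and normal (being locally a blowup of a smooth toric variety, hence normal — or at worst one restricts to the smooth locus), every global function on $\shA$ restricts to an element of $\bar A(\s_w)$ for every $w$, giving $A^{\up} \subseteq \bigcap_w \bar A(\s_w)$; the reverse inclusion is the statement that a function lying in every $\bar A(\s_w)$ extends over all of $\shA$, which follows because $\shA$ is covered by the $U'_{\s_w,\shA}$ (up to codimension two) and normality lets us glue. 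Thus $A^{\up} = \Gamma(\shA,\shO_\shA)$, and the natural map $\iota: \shA \to \Spec A^{\up}$ is a morphism.

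Next I would prove $\iota$ is an open immersion. It suffices to show $\iota$ restricted to each affine seed torus $\shA_{\s_w} = \Spec \kk[M^\circ] \hookrightarrow \Spec A^{\up}$ is an open immersion, and that these glue compatibly. The key point is that the localization of $A^{\up}$ at the cluster monomials of the seed $\s_w$ recovers $\kk[M^\circ]$: each cluster variable $A_i$ of $\s_w$ lies in $A^{\up}$, and inverting all of them in $A^{\up}$ yields exactly $\kk[M^\circ]$, because a universal Laurent polynomial becomes, after inverting the $A_i$, an arbitrary Laurent polynomial in that chart, while the inclusion $A^{\up}[\prod A_i^{-1}] \subseteq \kk[M^\circ]$ is clear and the reverse follows since any element of $\kk[M^\circ]$ times a high enough power of $\prod A_i$ is regular on $U'_{\s_w,\shA}$ hence in $\bar A(\s_w) \supseteq A^{\up}$ — wait, one must be slightly careful here, and this is where I would expect the main technical friction. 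One needs that $\prod_i A_i$ (or each $A_i$ separately) is not a zero divisor and that the principal open set $D(\prod A_i) \subseteq \Spec A^{\up}$ has coordinate ring exactly $\kk[M^\circ]$; granting this, $\shA_{\s_w} \to \Spec A^{\up}$ is the inclusion of a principal open affine, hence an open immersion, and different seeds give principal opens that overlap compatibly by construction of the gluing. Once $\iota$ is an open immersion, separatedness of $\shA$ is immediate: $\Spec A^{\up}$ is affine hence separated, and an open subscheme of a separated scheme is separated.

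The hard part will be the claim that $A^{\up}[(\prod_i A_i)^{-1}] = \kk[M^\circ]$, i.e., that localizing the upper cluster algebra at the cluster monomials of a fixed seed recovers the full Laurent ring. The subtlety is showing that for any $m \in M^\circ$ there is $N \gg 0$ with $z^m \cdot \prod_i z^{N f_i} \in A^{\up}$; by Lemma \ref{keylemma}(4) it is enough to check this function extends regularly over $U'_{\s,\cA}$, and since $U'_{\s,\cA}$ is the union of $\shA_\s$ with the tori $\shA_{\mu_i(\s)}$ glued along loci where $\mu_i$ is an isomorphism, one analyzes the pole order of $z^m$ along the boundary divisors introduced in each neighboring chart — these poles are controlled by the mutation formula \eqref{Amutation}, and multiplying by a sufficiently high power of the cluster monomial $\prod A_j$ clears them. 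I expect this to be a short but not entirely formal computation with the $\epsilon_{ij}$; everything else is bookkeeping with normality, codimension-two complements, and the $S_2$ property already invoked in Theorem \ref{maingeometrictheorem}.
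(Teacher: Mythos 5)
The local half of your argument is essentially the paper's, and your ``hard part'' is in fact trivial: since each cluster variable $A_i$ of the seed $\s$ lies in $A^{\up}$ (Laurent phenomenon, Corollary \ref{lpcor}) and $A^{\up}\subseteq\kk[A_1^{\pm1},\ldots,A_n^{\pm1}]$, the localization $A^{\up}[(A_1\cdots A_n)^{-1}]$ is squeezed between $\kk[A_1,\ldots,A_n][(A_1\cdots A_n)^{-1}]=\kk[M^{\circ}]$ and $\kk[M^{\circ}]$, so it equals $\kk[M^{\circ}]$ with no computation (any $z^m(A_1\cdots A_n)^N$ with $N\gg0$ is a monomial with non-negative exponents in the $A_i$, hence in $A^{\up}$). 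By contrast, the route you propose for this step does not work: checking that $z^m(\prod_iA_i)^N$ extends regularly over $U'_{\s,\cA}$ only shows membership in the \emph{upper bound} $\bar A(\s)=H^0(U'_{\s,\cA},\shO_{U'_{\s,\cA}})$, which contains $A^{\up}$ rather than the other way around, and by Remark \ref{ourscrewup} (Muller's example) the containment can be strict. Relatedly, your assertion that $U'_{\s_w,\shA}$ has complement of codimension $\geq 2$ in $\shA$ is false: Lemma \ref{keylemma}, (4) controls the complement inside $U_{\s_w,\shA}$, not inside $\shA$, and were your assertion true the upper bound would equal the upper cluster algebra, again contradicting Remark \ref{ourscrewup}. (Your first two paragraphs are also unnecessary, since in this paper $A^{\up}$ is \emph{defined} as $\Gamma(\shA,\shO_{\shA})$.)

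The genuine gap is the global step. Knowing that each $\iota_{\s}:\shA_{\s}\rightarrow\Spec A^{\up}$ is an open immersion (onto the principal open $D(A_1\cdots A_n)$) only makes $\iota$ a local isomorphism; to get an open immersion you must prove $\iota$ is injective, i.e., that if $x\in\shA_{\s}$, $y\in\shA_{\s'}$ satisfy $\iota(x)=\iota(y)$, then $x$ and $y$ are identified by the gluing. This cannot be dismissed as ``the principal opens overlap compatibly by construction of the gluing'': Proposition \ref{gluingconstruction} identifies points only along the maximal open sets on which the birational maps $\mu_{w',w}$ are isomorphisms onto their images, so one must actually show that $\mu_{w',w}$ is defined at $y$, is a local isomorphism there, and sends $y$ to $x$. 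This is precisely where the paper's proof does its work, using the morphism $\psi:\Spec A^{\up}\rightarrow\AA^n$ induced by $\kk[A_1,\ldots,A_n]\subseteq A^{\up}$ to identify $\mu_{w',w}$ with $(\psi\circ\iota_{\s})^{-1}\circ(\psi\circ\iota_{\s'})$ near $y$; alternatively, granting that each $\iota_{\s}$ is an open immersion, one can note that on $\iota_{\s'}^{-1}\bigl(\iota_{\s}(\shA_{\s})\cap\iota_{\s'}(\shA_{\s'})\bigr)$ the composite $\iota_{\s}^{-1}\circ\iota_{\s'}$ is an isomorphism onto its image agreeing with $\mu_{w',w}$ as a rational map, so this set lies in the gluing domain and $x$, $y$ are glued. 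Some version of this injectivity argument --- which is what ultimately yields separatedness --- must be supplied, and your proposal omits it.
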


\begin{proof}
$\shA$ is covered by open sets of the form $\shA_{\s}$, for various
seeds $\s$. First note that the induced map $\iota_\s:\shA_{\s}\rightarrow\Spec A^{\up}$
is an open immersion. Indeed, this map is induced by the inclusion
$\iota_\s^*:A^{\up}\subseteq \kk[A_1^{\pm 1},\ldots,A_n^{\pm 1}]=:B$, where $A_1,\ldots,
A_n$ are the cluster coordinates on $\shA_\s$. One checks this is
a local isomorphism: given $(a_1,\ldots,a_n)\in \shA_{\s}$, $a_1,\ldots,a_n
\not=0$, the corresponding
maximal ideal is $\fom=\langle A_1-a_1,\ldots,A_n-a_n\rangle\subseteq
B$. By the Laurent phenomenon, $A_1,\ldots,A_n\in A^{\up}$, and thus
$A_1,\ldots,A_n$ are invertible in the localization 
$A^{\up}_{(\iota_\s^*)^{-1}(\fom)}$.
Thus $A^{\up}_{(\iota_\s^*)^{-1}(\fom)}\cong B_{\fom}$, and $\iota_\s$ is a local
isomorphism. Thus by \cite{Gr60}, I, 8.2.8, $\iota_\s$ is an
open immersion. 

To show $\iota$ itself is now an open immersion, it is sufficient to show
it is one-to-one since it is a local isomorphism. Let $x\in \shA_{\s}$,
$y\in \shA_{\s'}$ be such that $\iota(x)=\iota(y)$. Let $A_1,\ldots,A_n$
be the cluster coordinates on $\shA_{\s}$. Again by the Laurent phenomenon,
there is an inclusion $\kk[A_1,\ldots,A_n]\subseteq A^{\up}$, hence a map
$\psi:\Spec A^{\up} \rightarrow \AA^n$. The composition $\psi\circ \iota_{\s}$
is the obvious inclusion and $(\psi\circ\iota_{\s})^{-1}\circ
(\psi\circ\iota_{\s'})$ agrees, as a rational
map, with $\mu_{w',w}$, where $w', w$ are the vertices of $\foT$ corresponding
to the seeds $\s', \s$. Thus the map $\mu_{w',w}$ is defined at $y$,
since $\psi\circ \iota_{\s'}$ is defined at $y$ and $(\psi\circ\iota_{\s})^{-1}$
is defined at $\psi(\iota_{s'}(y))=\psi(\iota_{\s}(x))$. Furthermore,
$\mu_{w',w}$ is then a local isomorphism at $y$ as it agrees with
$\iota_{\s}^{-1}\circ\iota_{\s'}$ at $y$, and $\iota_{\s}$ and $\iota_{\s'}$
are local isomorphisms at $x$ and $y$ respectively.
So the gluing map 
defining $\shA$ identifies $x$ and $y$, and $\iota$ is injective.
\end{proof}

\section{The $\shA_t$ and $\shA_{\prin}$ cluster varieties as torsors}
\label{univtorsorsection}

Fix in this section fixed data  and a seed $\s$ as usual.
We shall assume that there are no frozen variables, i.e., $I_{\uf}=I$,
$N_{\uf}=N$,
and furthermore that the matrix $\epsilon$ has no zero row (or equivalently
no zero column). Note that if $\epsilon$ does have a zero row the
same is true for all mutations, so this condition is mutation independent.
We then obtain the $\shX$, $\shA$, $\shA_{\prin}$ and $\shA_t$ varieties.

Denote by $X$ the open subset of $\shX$ obtained by gluing together
the seed tori $\shX_{\s}$ and $\shX_{\mu_k(\s)}$, $1\le k\le n$. This still
comes with a map $\lambda:X\rightarrow T_{K^*}$ as in Construction
\ref{variousrelations}, and we write
$X_\phi$ for the fibre
over $\phi\in T_{K^*}$.

We first compute the Picard group of $X$ and $X_\phi$:

\begin{theorem}
\label{XePicard}
For $\phi\in T_{K^*}$,
\[
\Pic(X)\cong \Pic(X_\phi)\cong \coker(p^*:N\rightarrow M^{\circ}).
\]
\end{theorem}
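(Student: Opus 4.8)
The plan is to reduce the computation to divisor class groups and then read everything off the explicit toric blow-up model. Since $X$ is obtained by gluing the seed tori $\shX_\s$ and $\shX_{\mu_k(\s)}$ along open subsets it is smooth, so $\Pic(X)=\Cl(X)$ (and similarly $\Pic(X_\phi)=\Cl(X_\phi)$ once we have checked $X_\phi$ is smooth). By Lemma \ref{keylemma}(5) together with Lemma \ref{torusgluinglemma2}(1), $X=U'_{\s,\cX}$ agrees, away from a codimension-two set, with $U_{\s,\cX}=\tTV_{\s,\cX}\setminus D$, the complement of the strict transform of the toric boundary in the blow-up of $\TV_{\s,\cX}$ along the centers $Z_{\cX,i}$, $i\in I_{\uf}$; since class groups of normal varieties are unchanged by codimension-two modifications, I may compute with $U_{\s,\cX}$. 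Its open torus is $T_M=\shX_\s$, and in codimension one $U_{\s,\cX}\setminus T_M$ is the union of the $n=|I_{\uf}|$ exceptional divisors $E_i$ over $Z_{\cX,i}$ (the strict transforms of the toric divisors having been removed; the reduced centers $D_i\cap\bar V(1+z^{e_i})$ are pairwise distinct because the $e_i$ form a basis). The excision sequence
\[
\bigoplus_{i=1}^{n}\ZZ E_i\longrightarrow \Cl(U_{\s,\cX})\longrightarrow \Cl(T_M)=0
\]
is exact; a rational function whose divisor is supported on $U_{\s,\cX}\setminus T_M$ restricts to a unit on $T_M$, hence is a scalar times a character $z^n$, so the kernel of the first map is exactly the image of $N\to\bigoplus_i\ZZ E_i$, $n\mapsto\sum_i\ord_{E_i}(z^n)\,E_i$. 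Thus $\Cl(X)\cong\coker\bigl(N\to\bigoplus_i\ZZ E_i\bigr)$.

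Next I compute these orders. Let $\bar v_i=-d_iv_i/a_i$ be the primitive generator of the $i$-th ray of $\Sigma_{\s,\cX}$, where $a_i=\ind(d_iv_i)$, so that $Z_{\cX,i}=D_i\cap\bar V\bigl((1+z^{e_i})^{a_i}\bigr)$ is, locally, the blow-up of a smooth surface (times a torus) along an ideal of the form $(s,h^{a_i})$, whose reduced exceptional divisor $E_i$ has $\ord_{E_i}(s)=a_i$ and $\ord_{E_i}(h)=1$. Since a character $z^n$ is a unit near the generic point of $\bar V(1+z^{e_i})$ and has $\ord_{D_i}(z^n)=\langle n,\bar v_i\rangle$, this gives
\[
\ord_{E_i}(z^n)=a_i\langle n,\bar v_i\rangle=\langle n,-d_iv_i\rangle=d_i\{n,e_i\},
\]
which for $n=e_k$ is $\epsilon_{ki}$. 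Hence, under the identification $E_i\leftrightarrow f_i$, the map $N\to\bigoplus_i\ZZ E_i$ is precisely $p^*:N\to M^{\circ}$ (recall $p^*(e_k)=v_k=\sum_i\epsilon_{ki}f_i$), and therefore $\Pic(X)=\Cl(X)\cong\coker(p^*:N\to M^{\circ})$.

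For $X_\phi$: because $K=\ker p^*$ is the radical of $\{\cdot,\cdot\}$, every ray $\bar v_j$ annihilates $K$, so $\Sigma_{\s,\cX}$ lies in the saturated sublattice $\{m\in M:\langle K,m\rangle=0\}$ and $\lambda$ extends to a toric morphism $\TV_{\s,\cX}\cong\TV^{0}\times T_{K^*}\to T_{K^*}$ with $\TV^{0}$ the toric variety of $\Sigma_{\s,\cX}$ in $N/K$. Under this splitting $Z_{\cX,i}$ is flat over $T_{K^*}$ with fibre $D_i^{0}\cap\bar V\bigl((1+c_i(\phi)z^{\bar e_i})^{a_i}\bigr)$, $c_i(\phi)=z^{e_i^{K}}(\phi)\in\kk^{\times}$ — again a cluster-type center — and blowing up commutes with this base change, so $X_\phi$ agrees away from codimension two with the corresponding blow-up of $\TV^{0}$ minus its boundary; in particular $X_\phi$ is smooth, being glued from the tori $(\shX_{\s_w})_\phi$. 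The hypothesis that $\epsilon$ has no zero row means $v_i\ne 0$, i.e.\ $e_i\notin K$, which forces each $E_i$ to dominate $T_{K^*}$ with integral fibres, so $E_i\cap X_\phi$ is a single prime divisor and these make up $X_\phi\setminus(\shX_\s)_\phi$ in codimension one. As $X_\phi$ is a Cartier divisor in $X$ meeting $E_i$ transversally at the generic point of $E_i\cap X_\phi$, one has $\ord_{E_i\cap X_\phi}(z^n|_{X_\phi})=\ord_{E_i}(z^n)=\epsilon_{ki}$ for $n=e_k\notin K$, while $z^n|_{X_\phi}$ is constant (and $\epsilon_{ki}=0$) for $n\in K$. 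Rerunning the excision argument on $X_\phi$ yields the same presentation, so $\Pic(X_\phi)=\Cl(X_\phi)\cong\coker(p^*)$, compatibly with restriction from $\Pic(X)$. I expect the main obstacle to be the bookkeeping around the non-reduced blow-up centers — in particular nailing the factor $a_i=\ind(d_iv_i)$ in $\ord_{E_i}$ — and, for the fibre statement, verifying that $\lambda$ is trivial at the toric level and that $E_i\cap X_\phi$ stays an irreducible divisor, which is exactly where the no-zero-row assumption is used.
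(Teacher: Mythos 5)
The first half of your argument (the computation of $\Pic(X)$) is correct, and it takes a genuinely different route from the paper. The paper never leaves the torus charts: it computes $H^1$ of $\shO^{\times}$ by an explicit \v Cech calculation on the cover by the $n+1$ seed tori, writing down the unit groups \eqref{invertibles} of the overlaps and dividing cocycles by coboundaries. You instead use smoothness to identify $\Pic$ with the divisor class group, pass to the blow-up model $U_{\s,\shX}$ via Lemma \ref{keylemma}(5) and Lemma \ref{torusgluinglemma2}, and run the excision sequence for the open torus $T_M$, computing $\ord_{E_i}(z^n)=a_i\langle n,\bar v_i\rangle=d_i\{n,e_i\}$ from the local model of the blow-up of $(s,h^{a_i})$; that order computation is right, and the identification of the relation lattice with $p^*(N)$ follows. (You could even avoid the normality bookkeeping for $\tTV_{\s,\shX}$ entirely by staying on $X$: on the chart $\shX_{\mu_i(\s)}$ the rational function extending $z^n$ is $z^n(1+z^{e_i})^{[n,e_i]}$ by \eqref{Xmutationeq}, which gives the same orders along the $n$ divisors $V(1+z^{e_i})$.) Your approach buys a geometric identification of the generators of $\Pic(X)$ with the exceptional divisors; the paper's cocycle computation is more elementary and also produces the explicit cocycles $\shL_m$ that are reused later (Theorem \ref{universaltorsortheorem}).

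The fibre statement is where there is a genuine gap. You claim that the no-zero-row hypothesis, i.e.\ $e_i\notin K$, ``forces each $E_i$ to dominate $T_{K^*}$ with integral fibres, so $E_i\cap X_\phi$ is a single prime divisor.'' Domination is true, integrality is not: the fibre of $E_i$ (equivalently of $V(1+z^{e_i})\subset\shX_{\mu_i(\s)}$) over $\phi$ is irreducible exactly when the image $\bar e_i$ of $e_i$ in $N/K$ is primitive --- this is precisely the dichotomy of Lemma \ref{sectionlemma}. If $\bar e_i=m\,u$ with $m\ge 2$, then on the fibre $1+z^{e_i}$ becomes $1+c_i z^{mu}$ with $c_i\in\kk^{\times}$, which (say over $\kk=\CC$) splits into $m$ coprime irreducible factors, so $E_i\cap X_\phi$ has $m$ components; rerunning your excision argument then yields extra generators and the cokernel is visibly larger than $M^{\circ}/p^*(N)$, so the proof as written does not close (in fairness, the paper's own unit-group formulas \eqref{invertibles} tacitly assume the same irreducibility, but the justification you give via the no-zero-row hypothesis is not a correct substitute, and this is exactly the point that needs either an extra hypothesis or a genuine argument). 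Separately, $X_\phi$ is not a Cartier divisor in $X$ unless $\rank K=1$ (it has codimension $\rank K$); the clean way to get the orders on the fibre is to compute directly in the chart $(\shX_{\mu_i(\s)})_\phi$, where the function corresponding to $z^n$ is $z^{\bar n}(1+c_iz^{\bar e_i})^{[n,e_i]}$ and the order along each component of $V(1+c_iz^{\bar e_i})$ can be read off.
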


\begin{proof}
We first need to describe precisely how $X$ and
$X_\phi$ are glued together out of tori. Let $U_0=\shX_{\s}$,
$U_i=\shX_{\mu_i(\s)}$, $1\le i\le n$.
We have birational gluing maps $\varphi_{ij}:U_i
\dasharrow U_j$ given by $\varphi_{0j}=\mu_j$,
$\varphi_{ij}=\mu_j\circ \mu_i^{-1}$. These glue over sets $U_{ij}$ as in
Proposition \ref{gluingconstruction}. Note that
\[
U_{0j}=\shX_\s\setminus V(1+z^{e_j}),
\]
by \eqref{Xmutationeq} and the fact that no row or column of $\epsilon$
is zero. The same description applies to $U_{j0}$.
On the other hand, noting that
\[
(\mu_j\circ\mu_i^{-1})^*(z^n)=
z^n(1+z^{e_j}(1+z^{e_i})^{[e_j,e_i]})^{[-n,e_j]}(1+z^{e_i})^{[n,e_i]},
\]
one sees that
if we set
\[
h_{ij}=\begin{cases} 1+z^{e_j}(1+z^{e_i})^{[e_j,e_i]}& [e_j,e_i]\ge 0\\
(1+z^{e_i})^{-[e_j,e_i]}+z^{e_j}& [e_j,e_i]\le 0
\end{cases}
\]
then
\[
U_{ij}=\shX_{\mu_i(\s)}\setminus (V(1+z^{e_i})\cup V(h_{ij})).
\]
Now the $U_{ij}$ also map to $T_{K^*}$, with fibres $U_{ij,\phi}$ over $\phi$,
so that $X_\phi$ is obtained by gluing the sets $U_{i,\phi}$ (the fibres of
$U_i\rightarrow T_{K^*}$ over $\phi$) via the restriction
of the $\varphi_{ij}$ to $U_{ij,\phi}$.
Choose a splitting $N=K\oplus N'$. A regular function on a fibre of
$\shX_s\rightarrow T_{K^*}$ is a linear combination of restrictions
of monomials $z^{n'}$, $n'\in N'$, to the fibre.

In particular, we have
\begin{align}
\label{invertibles}
\begin{split}
\Gamma(U_{0i},\shO_{U_{0i}}^{\times})= {} & 
\{ cz^n (1+z^{e_i})^{-a}\,|\, c\in \kk^{\times}, n\in N, a\in \ZZ\},\\
\Gamma(U_{ij},\shO_{U_{ij}}^{\times})= {} & 
\{ c z^n (1+z^{e_i})^{-a} h_{ij}^{-b}\,|\, c\in \kk^{\times}, n\in N, a,b\in\ZZ\},\\
\Gamma(U_{0i,\phi},\shO_{U_{0i,\phi}}^{\times})= {} & 
\{ cz^n (1+z^{e_i})^{-a}\,|\, c\in \kk^{\times}, n\in N', a\in \ZZ\},\\
\Gamma(U_{ij,\phi},\shO_{U_{ij,\phi}}^{\times})= {} & 
\{ c z^n (1+z^{e_i})^{-a} h_{ij}^{-b}\,|\, c\in \kk^{\times}, n\in N', a,b\in\ZZ\},
\end{split}
\end{align}
noting that as $e_i\not\in K$ for any $i$ by assumption on $\epsilon$,
$1+z^{e_i}$ has some zeroes on $U_{ij,\phi}$.
We will now compute $\Pic(X_\phi)$, the argument for $\Pic(X)$ being identical
except that $N'$ is replaced by $N$ below.
We compute $\Pic(X_\phi)=H^1(X_\phi,\shO_{X_\phi}^{\times})$ using
the \v Cech cover $\{U_{i,\phi}\,|\, 0\le i\le n\}$
with $U_{i,\phi}\cap U_{j,\phi}$ identified with $U_{ij,\phi}$ for $i<j$.
Indeed, this cover calculates $\Pic(X_{\phi})$ because $\Pic(U_{i,\phi})=
\Pic(U_{ij,\phi})=0$ for all $i$ and $j$.
Thus a \v Cech 1-cochain consists of elements $g_{ij}\in \Gamma(U_{ij,\phi},
\shO^{\times}_{U_{ij,\phi}})$ for each $i<j$. In particular, if $(g_{ij})$ is
a 1-cocycle, necessarily $g_{ij}=(\mu_i^{-1})^*(g_{0i}^{-1}g_{0j})$,
and the $g_{0i}$'s can then be chosen independently.
From \eqref{invertibles}, the group of 1-cocycles is then identified with
\[
Z^1:=\bigoplus_{i=1}^n(\kk^{\times}\oplus N'\oplus\ZZ).
\]
On the other hand, $\Gamma(U_{i,\phi},\shO_{U_{i,\phi}}^{\times})=
\kk^{\times}\oplus N'$. A
$0$-cochain $g=(g_i)_{0\le i\le n}$, $g_i\in \kk^{\times}\oplus N'$
then satisfies
\[
(\partial g)_{0i}= g_0^{-1} \mu_i^*(g_i),
\]
where $\partial$ denotes \v Cech coboundary.
Given Equation \eqref{Xmutationeq}, we can then view $\partial$ as
a map
\[
C^0:=\bigoplus_{i=0}^n (\kk^{\times}\oplus N')  \rightarrow Z^1,\quad
(c_i,n_i)_{0\le i\le n}  \mapsto (c_ic_0^{-1}, n_i-n_0, [n_i,e_i])_{1\le i\le
n}.
\]
Thus modulo $\partial(C^0)$, every element of $Z^1$ is equivalent to some
$(1,0,a_i)_{1\le i\le n}$. Thus $Z^1/\partial(C^0)$ is isomorphic to
$\ZZ^n/(\partial(C^0)\cap \ZZ^n)$, where $\ZZ^n\subset Z^1$ via the last
component for each $i$. But $\partial(C^0)\cap \ZZ^n$ consists of
the coboundaries of elements $(1,n_0)_{0\le i\le n}$, and the coboundary
of such an element is $(1,0,[n_0,e_i])_{1\le i\le n}$. If we identify
$\ZZ^n$ with $M^{\circ}$ using the basis $f_i$, then with $n_0=e_j$,
we obtain the element of $M^{\circ}$ given by
\[
\sum_{i=1}^n [e_j,e_i]f_i=\sum_{i=1}^n \{e_j,e_i\}e_i^*=p^*(e_j).
\]
This proves the result.
\end{proof}

\begin{remark}
We note the calculations in the above proof demonstrate easily how 
$X_e$ (and hence $X$ and $\shX$)
can fail to be separated. Indeed, suppose that $e_i, e_j$ agree
after projection to $N/K$. In particular, $[n,e_i]=[n,e_j]$ for any
$n\in N$ and $[e_j,e_i]=0$. Thus $\mu_j\circ\mu_i^{-1}$ is the
identity on $\kk[N/K]\cong \shX_{\mu_i(\s),e}\cong \shX_{\mu_j(\s),e}$,
but $U_{ij,e}$ is a proper subset of $\shX_{\mu_i(\s),e}$. So
the two tori are glued via the identity across a proper open subset
of each torus, and we obtain a non-separated scheme.
\end{remark}

\begin{construction} 
\label{universaltorsordef}
We now recall the construction of the universal torsor over a scheme $X$
with finitely generated Picard group.
Ideally, we would like to define the universal torsor as the
scheme affine over $X$
\[
UT_X:= {\bf Spec} \bigoplus_{\shL\in \Pic X} \shL.
\]
However, the quasi-coherent sheaf of $\shO_X$-modules appearing here
doesn't have a natural algebra structure, since elements of
$\Pic X$ represent \emph{isomorphism classes} of line bundles. If
$\Pic X$ is in fact a free abelian group, we can proceed as in
\cite{HK00} and choose a set of
line bundles $\shL_1,\ldots,\shL_n$ whose isomorphism classes form a basis for
the Picard group, and write, for $\nu\in \ZZ^n$, $\shL^{\nu}:=
\bigotimes_{i=1}^n \shL_i^{\nu_i}$. Then 
$\bigoplus_{\nu\in\ZZ^n} \shL^{\nu}$ does have a natural algebra structure.

If $\Pic X$ has torsion, then we need to make use of the definition
given in \cite{BH03}, \S3.
We can choose a sufficiently fine open cover
$\foU$ of $X$ such that every isomorphism class of line bundle on $X$
is represented by a \v Cech 1-cocycle for $\shO^{\times}_X$ 
with respect to this cover. Denoting the set of \v Cech 1-cocycles
as $Z^1(\foU,\shO_X^{\times})$, we choose a finitely generated subgroup
$\Lambda\subseteq Z^1(\foU,\shO_X^{\times})$. If for $\lambda\in\Lambda$
we denote by $\shL_{\lambda}$ the corresponding line bundle, we can
choose $\Lambda$ so that the natural map
$\Lambda\rightarrow\Pic X$, $\lambda\mapsto [\shL_{\lambda}]$, 
is surjective. Then multiplication gives a sheaf of $\shO_X$-algebras
structure to $\shR:=\bigoplus_{\lambda\in\Lambda} \shL_{\lambda}$. 

To obtain the universal torsor, we need to define an ideal $\shI\subseteq\shR$
generated by relations coming from isomorphisms $\shL_{\lambda}\cong 
\shL_{\lambda'}$. However, these isomorphisms must be chosen carefully, 
so \cite{BH03}
defines the notion of a \emph{shifting family}. Let $\Lambda_0=\ker(\Lambda
\rightarrow\Pic X)$. A shifting family is a set of $\shO_X$-module
isomorphisms $\{\rho_{\lambda}:\shR\rightarrow\shR\,|\,\lambda\in\Lambda_0\}$
such that
\begin{enumerate}
\item $\rho_{\lambda}$ maps $\shL_{\lambda'}$ to $\shL_{\lambda'+\lambda}$,
for every $\lambda\in\Lambda_0$, $\lambda'\in \Lambda$;
\item For every $\lambda_1,\lambda_2\in\Lambda_0$, $\rho_{\lambda_1+\lambda_2}
=\rho_{\lambda_1}\circ\rho_{\lambda_2}$;
\item If $f$, $g$ are sections of $\shL_{\lambda_1}$, $\shL_{\lambda_2}$
respectively, for $\lambda_1,\lambda_2\in\Lambda$, and $\lambda\in\Lambda_0$,
we have $\rho_{\lambda}(fg)=f\rho_{\lambda}(g)$.
\end{enumerate}
A shifting family defines a sheaf of ideals $\shI\subseteq \shR$
such that $\shI(U)$ is generated by elements of the form
$f-\rho_{\lambda}(f)$ for $f\in \shR(U)$, $\lambda\in\Lambda_0$.

Given a shifting family, the universal torsor is then defined to be
\[
UT_X:={\bf Spec}\, \shR/\shI.
\]
A priori $UT_X$ depends on the choice of shifting family (although 
\cite{BH03} proves any two choices are isomorphic provided that
$\kk$ is algebraically closed and $\Gamma(X,\shO_X^{\times})=\kk^{\times}$,
see Lemma 3.7 of \cite{BH03}). If $\Pic X$ is torsion free, then this ambiguity
disappears. Thus, in general, we will talk about a \emph{choice of
universal torsor}.
\end{construction}

Given a seed $\s$, $t\in T_M$, let $A_t$ (resp.\ $A_{\prin}$) 
be the variety defined by gluing together
the seed tori $\shA_{\prin,\s,t}$, $\shA_{\prin,\mu_i(\s),t}$ (resp.\ 
$\shA_{\prin,\s}$, $\shA_{\prin,\mu_i(\s)}$), $1\le i\le n$,
analogously
to $X$. 

\begin{theorem}
\label{universaltorsortheorem}
\begin{enumerate}
\item
Let $t\in T_M$, $\phi=i(t)\in T_{K^*}$ (see \eqref{principaltoXdiagram}).
The torsor $p_t:A_t\rightarrow X_\phi$ of Construction \ref{variousrelations}
is a universal torsor for $X_{\phi}$. For very general $t$, $p_t:\shA^{\ft}_t
\rightarrow \shX^{\ft}_{\phi}$ is a universal torsor for $\shX_{\phi}$.
\item
For $m\in M^{\circ}$, let $\shL_m$ denote the line bundle on $X$ associated
to $m$ under the identification $\Pic(X)\cong M^{\circ}
/p^*(N)$. Specifically, $\shL_m$ is the representative of the isomorphism
class given by the \v Cech 1-cocycle represented by $m\in M^{\circ}$ in
the proof of Theorem \ref{XePicard}. Then
\[
A_{\prin}={\bf Spec} \bigoplus_{m \in M^{\circ}} \shL_m.
\]
Furthermore, the line bundle $\shL_m$ on $X$ extends to a line bundle
$\shL_m$ on $\shX^{\ft}$, and similarly
\[
\shA^{\ft}_{\prin}={\bf Spec} \bigoplus_{m \in M^{\circ}} \shL_m
\]
(using the same finite subtrees of $\foT$ to define both $\shA_{\prin}^{\ft}$
and $\shX^{\ft}$).
\end{enumerate}
\end{theorem}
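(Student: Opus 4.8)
The plan is to prove (2) first, since it carries the essential computation, and then to deduce (1) by restricting the torsor of (2) along the projection $\pi:\shA_{\prin}\to T_M$; in both parts the statements involving $\shX^{\ft}$ and $\shA_{\prin}^{\ft}$ (or $\shA_t^{\ft}$) will follow by comparing with the ``small'' versions up to codimension two, via Theorem \ref{maingeometrictheorem}.

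For (2), recall from Construction \ref{Aprincipaldef} that $\tilde p:\shA_{\prin}\to\shX$ is a $T_{N^{\circ}}$-torsor. In particular $\tilde p$ is affine, so $\shA_{\prin}={\bf Spec}\,\tilde p_{*}\shO_{\shA_{\prin}}$, and since $T_{N^{\circ}}=\Spec\kk[M^{\circ}]$ acts, the sheaf of $\shO_{\shX}$-algebras $\tilde p_{*}\shO_{\shA_{\prin}}$ splits into weight spaces $\bigoplus_{m\in M^{\circ}}\shL_{m}$, where $\shL_{m}$ is the invertible sheaf obtained by pushing the torsor out along the character $m$ of $T_{N^{\circ}}$; the multiplication is the tautological one coming from the canonical isomorphisms $\shL_{m}\otimes\shL_{m'}\cong\shL_{m+m'}$. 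So the content of (2) is to identify $\shL_{m}$, restricted to the open set $X=U'_{\s,\shX}\subseteq\shX$ glued from $U_{0}=\shX_{\s}$ and $U_{i}=\shX_{\mu_{i}(\s)}$ ($1\le i\le n$), with the line bundle cut out by the \v Cech $1$-cocycle representing $m$ in the proof of Theorem \ref{XePicard}. Over $U_{0}$ the torsor is trivial, and writing $\widetilde{M}^{\circ}=M^{\circ}\oplus N$ we get $\tilde p_{*}\shO_{\shA_{\prin,\s}}=\bigoplus_{m}z^{(m,0)}\,\tilde p^{*}\kk[N]$; the same holds over each $U_{i}$ using the seed $\mu_{i}(\s)$. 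Comparing the two trivializations over $U_{0i}$ through the mutation formula \eqref{Amutationeq} for the doubled seed, and using that $N_{\uf}=N$ forces $p^{*}(e_{i})=v_{i}$ so that $z^{\tilde v_{i}}=\tilde p^{*}(z^{e_{i}})$, one finds $\mu_{i}^{*}\bigl(z^{(m,0)}\bigr)'=z^{(m,0)}(1+z^{e_{i}})^{-\langle d_{i}e_{i},\,m\rangle}$; hence the transition function of $\shL_{m}$ along $U_{0i}$ is $(1+z^{e_{i}})^{-a_{i}}$ when $m=\sum_{j}a_{j}f_{j}$, which is exactly the cocycle $(1,0,a_{i})_{i}$ of Theorem \ref{XePicard} under the identification $\ZZ^{n}\cong M^{\circ}$, $a\mapsto\sum a_{j}f_{j}$. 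This gives $A_{\prin}={\bf Spec}\bigoplus_{m}\shL_{m}$. For the $\shX^{\ft}$ statement, $\shX^{\ft}$ is Noetherian (a finite gluing of tori) and $X\subseteq\shX^{\ft}$ is open with complement of codimension $\ge2$ (Lemma \ref{keylemma}(5) with Theorem \ref{maingeometrictheorem}(1)); moreover $\tilde p^{-1}(\shX_{\s_{w}})=\shA_{\prin,\s_{w}}$ for every seed, being a $T_{N^{\circ}}$-invariant open subset surjecting onto $\shX_{\s_{w}}$, so $\tilde p^{-1}(\shX^{\ft})=\shA^{\ft}_{\prin}$ and $\tilde p:\shA^{\ft}_{\prin}\to\shX^{\ft}$ is again a $T_{N^{\circ}}$-torsor; its weight-space line bundles are the unique extensions (using normality of $\shX^{\ft}$) of the $\shL_{m}$ on $X$, and $\shA^{\ft}_{\prin}={\bf Spec}\bigoplus_{m}\shL_{m}$ over $\shX^{\ft}$, as claimed.

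For (1), fix $t\in T_{M}$ and $\phi=i(t)$. Since $\lambda\circ\tilde p=i\circ\pi$ by \eqref{principaltoXdiagram}, $p_{t}:=\tilde p|_{A_{t}}$ carries $A_{t}$ into $X_{\phi}=\lambda^{-1}(\phi)\cap X$, and $A_{t}=\tilde p^{-1}(X)\cap\pi^{-1}(t)$. Under the decomposition of (2), $\pi^{*}z^{n}=z^{(0,n)}$ equals $z^{(-p^{*}(n),0)}\cdot\tilde p^{*}(z^{n})$, hence is a section of the trivial-class bundle $\shL_{-p^{*}(n)}$; imposing $\pi^{*}z^{n}=z^{n}(t)$ for $n\in K=\ker p^{*}$ cuts the base $X$ down to $X_{\phi}$ (these $\pi^{*}z^{n}$ are the pullbacks of the coordinates defining $\lambda:\shX\to T_{K^{*}}$), while for $n\notin K$ the same relations amount precisely to a system of $\shO$-module isomorphisms $\shL_{m}\cong\shL_{m-p^{*}(n)}$ rescaled by the nonzero scalars $z^{n}(t)$. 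These isomorphisms satisfy conditions (1)--(3) of Construction \ref{universaltorsordef} for $\Lambda=M^{\circ}$, $\Lambda_{0}=p^{*}(N)$, so $A_{t}={\bf Spec}_{X_{\phi}}\bigoplus_{\bar m\in M^{\circ}/p^{*}(N)}\shL_{\bar m}$ for this shifting family, i.e.\ a choice of universal torsor for $X_{\phi}$ (recall $\Pic X_{\phi}\cong M^{\circ}/p^{*}(N)$ by Theorem \ref{XePicard}). Finally, for very general $t$, Theorem \ref{maingeometrictheorem}(1),(3) (together with Lemma \ref{keylemma}) show that $p_{t}:\shA^{\ft}_{t}\to\shX^{\ft}_{\phi}$ agrees with $p_{t}:A_{t}\to X_{\phi}$ away from a set of codimension $\ge2$; since $\shX^{\ft}_{\phi}$ is normal, its universal torsor is unchanged by deleting such a set, and therefore $p_{t}:\shA^{\ft}_{t}\to\shX^{\ft}_{\phi}$ is a universal torsor for $\shX_{\phi}$.

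The step I expect to be the main obstacle is the torsor structure in (1): verifying that the relations defining the fibre $A_{t}=\pi^{-1}(t)$ genuinely organize into a \emph{shifting family} in the sense of \cite{BH03}, and pinning down that ``very general $t$'' is exactly what is needed for the $\shX^{\ft}$ version. This is tangled up with keeping straight the several maps $\tilde p,\pi,p,\lambda,i$ and the doubled fixed data $\widetilde{N}=N\oplus M^{\circ}$. A secondary, purely bookkeeping, difficulty is matching signs and transition-function conventions when identifying $\shL_{m}$ with the cocycle of Theorem \ref{XePicard} in (2); it is possible that $\shL_{m}$ comes out as the line bundle attached to $-m$, which is a harmless reindexing.
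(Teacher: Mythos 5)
Your proposal is correct and follows essentially the same route as the paper: the heart in both is the identification, via the mutation formula \eqref{Amutationeq} for the doubled seed, of the transition functions of the weight-$m$ pieces of $\tilde p_*\shO_{\shA_{\prin}}$ (the paper's $e_m\mapsto z^{(m,0)}$) with the \v Cech cocycles from the proof of Theorem \ref{XePicard}, together with the same shifting family for $\Lambda=M^{\circ}$, $\Lambda_0=p^*(N)$ on the fibre over $t$, and the same codimension-two comparison for the $\shX^{\ft}$, $\shA^{\ft}$ statements. The only difference is organizational — the paper builds the comparison maps $\psi,\psi_\phi$ from the \v Cech-defined algebra $\shR$ and treats (1) and (2) in one pass, whereas you prove (2) first and deduce (1) by restricting to $\pi^{-1}(t)$, keeping the harmless scalars $z^n(t)$ explicit — so it is not a genuinely different argument.
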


\begin{proof} We first prove the statements for $A_{\prin}$, $X$
and $A_t$, $X_{\phi}$.
Continuing with the notation of the proof of Theorem \ref{XePicard} and
Construction \ref{universaltorsordef}, we take the open covers
$\foU=\{\shX_{\s}\}\cup \{\shX_{\mu_i(\s)}\,|\,1\le i\le n\}=\{U_{i}\,|\,
0\le i\le n\}$,
$\foU_\phi=\{\shX_{\s,\phi}\}\cup \{\shX_{\mu_i(\s),\phi}\,|\,1\le i\le n\}=\{U_{i,\phi}\,|\,
0\le i\le n\}$ 
as usual.
We saw in the proof of Theorem \ref{XePicard} that $M^\circ$ is naturally
identified with a subgroup of both $Z^1(\foU,\shO_X^{\times})$ and
$Z^1(\foU_\phi,\shO_{X_\phi}^{\times})$. Taking
the subgroup $\Lambda$ of this cocycle group to be $M^{\circ}$, we obtain 
\[
\Lambda_0=\ker (\Lambda\rightarrow \Pic(X_\phi))=\ker
(M^{\circ}\rightarrow M^{\circ}/p^*(N))=p^*(N).
\]
This then gives rise to a sheaf of $\shO_{X}$-algebras $\shR=
\bigoplus_{\lambda\in\Lambda}\shL_{\lambda}$ and a sheaf of 
$\shO_{X_\phi}$-algebras $\shR_\phi$ defined by the same formula.
For the two cases, we have the maps $\tilde p:A_{\prin}\rightarrow X$
and $p_t:A_t\rightarrow X_{\phi}$ of Construction \ref{variousrelations}.
Noting that
\[
U_{i,A_{\prin}}:=\tilde p^{-1}(U_{i})=\begin{cases} \shA_{\prin,\s} & i=0\\ 
\shA_{\prin,\mu_i(\s)} & i>0,
\end{cases}
\]
\[
U_{i,A_t}:=p_t^{-1}(U_{i,\phi})=\begin{cases} \shA_{\prin,\s,t} & i=0\\ 
\shA_{\prin,\mu_i(\s),t} & i>0
\end{cases}
\]
we see the morphisms $\tilde p,p_t$ are affine. Thus to prove both parts of the
theorem, it is sufficient to construct
morphisms of sheaves of $\shO_X$-algebras or $\shO_{X_\phi}$-algebras
\begin{equation}
\label{psidef}
\psi:\shR\rightarrow \tilde p_*\shO_{A_{\prin}}, \quad
\psi_\phi:\shR_\phi\rightarrow p_*\shO_{A_t}
\end{equation}
such that $\psi$ is an isomorphism and the kernel of $\psi_\phi$ is an
ideal $\shI$ arising from a shifting family.

First, by construction, 
$\shR_\phi|_{U_{i,\phi}}\cong \bigoplus_{m\in M^{\circ}}
\shO_{U_{i,\phi}} e_m$, and the transition function on $U_{0i,\phi}$ for
the generator $e_m$ is $(1+z^{e_i})^{-\langle d_ie_i,
m\rangle}$. The same formulae hold for $\shR$. 
Let $I_{\phi}\subseteq \Gamma(U_i,\shO_{U_i})=\kk[N]$ be the ideal of
the fibre $U_{i,\phi}\subseteq U_i$, and let $I_t\subseteq 
\Gamma(U_{i,A_{\prin}},\shO_{U_{i,A_{\prin}}})=\kk[\widetilde M^{\circ}]$
be the ideal of the fibre $U_{i,A_t}$ 
of $\pi:U_{i,A_{\prin}}\rightarrow T_M$ over
$t$. Then 
$\shR_\phi|_{U_{i,\phi}}$ is the quasi-coherent sheaf associated
to the free $\kk[N]/I_{\phi}$-module with basis 
$\{e_m\,|\,m\in M^{\circ}\}$, while $\shR|_{U_i}$ is the quasi-coherent
sheaf associated to the free $\kk[N]$-module with the same basis.

Second, note that $\tilde p_*\shO_{U_{i,A_{\prin}}}$ (resp.\ 
$p_*\shO_{U_{i,A_t}}$) is the quasi-coherent sheaf
associated to the $\kk[N]$-algebra $\kk[\widetilde M^{\circ}]$
(resp.\ the $\kk[N]/I_\phi$-algebra $\kk[\widetilde M^{\circ}]/I_t$).
The algebra structure is given by the map $N\rightarrow \widetilde M^{\circ}$,
$n\mapsto (p^*(n),n)$. There are natural
maps $\shR|_{U_i}\rightarrow \tilde p_*\shO_{U_{i,A_{\prin}}}$ and
$\shR_\phi|_{U_{i,\phi}}\rightarrow (p_t)_*\shO_{U_{i,A_t}}$ induced by the maps 
of $\kk[N]$- or $\kk[N]/I_\phi$-modules 
given by $e_m\mapsto z^{(m,0)}$. We first check that
these maps respect the transition maps. 
We do this for the case of $A_t\rightarrow
X_\phi$, the case of $A_{\prin}\rightarrow X$ being identical.
On $U_{0i,\phi}$, $e_m$ is
glued to $(1+z^{e_i})^{-\langle d_ie_i,m\rangle}e_m$ as observed above,
while $z^{(m,0)}\in \kk[\widetilde M^{\circ}]/I_t$ 
is transformed via the $\shA$ mutation
$\mu_i$. But using \eqref{Amutationeq},
\[
{\mu_i^*(z^{(m,0)})\over z^{(m,0)}} = (1+z^{(v_i,e_i)})^{-\langle d_ie_i,m\rangle}.
\]
This can be viewed via $p_t^*$ as the function on $U_{0i,\phi}$ 
given by $(1+z^{e_i})^{-\langle d_ie_i, m
\rangle}$. This shows that the transition maps match up, and we obtain
the desired map \eqref{psidef}.

Note that $\psi$ is easily seen to be an isomorphism. On the other hand,
the kernel $\shI$ of $\psi_\phi$ is generated on $U_{i,\phi}$
by elements of $\bigoplus e_m\kk[N]/I_\phi$ of the form 
$e_m- e_{m+p^*(n)}z^{-n}$
for $m\in M^{\circ}$, $n\in N$. This arises from the family of identifications
$\{\rho_{p^*(n)}\}$
defined by $\rho_{p^*(n)}(e_m)=e_{m+p^*(n)}z^{-n}$.
This is easily checked to be a shifting family. 

This completes the proof for $A_{\prin}$, $X$ and $A_t$, $X_{\phi}$. To
prove the result for $\shA^{\ft}_{\prin}$, $\shX^{\ft}$ etc., one just
notes that the corresponding spaces are equal to $A_{\prin}$, $X$ etc.\
outside of codimension two.
\end{proof}

\begin{definition} Given a choice of shifting family for a scheme $X$ 
over a field $\kk$
with finitely generated Picard group, the \emph{Cox ring} $\Cox(X)$ of $X$ is the 
$\kk$-algebra 
of global sections of $\shR/\shI$. If $\Pic X$ is free, this
coincides with the usual definition
\[
\Cox(X)=\bigoplus_{\nu} \Gamma(X,\shL^{\nu}),
\]
after a choice of line bundles $\shL_1,\ldots,\shL_n$ whose isomorphism
classes give a basis of $\Pic X$.
\end{definition}

\begin{corollary} \label{coxcor}
\begin{enumerate}
\item
The upper cluster algebra with principal coefficients is isomorphic
to
\[
\bigoplus_{m\in M^{\circ}} \Gamma(X,\shL_m).
\]
\item If the initial seed is totally coprime,
the upper cluster algebra is isomorphic to the Cox ring of $X_e$.
\item For $t \in T_M$ very general, $\Gamma(\shA_t,\shO_{\shA_t})$ is
isomorphic to the Cox ring of $X_{i(t)}$.
\end{enumerate}
\end{corollary}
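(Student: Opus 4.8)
The statement to prove is Corollary \ref{coxcor}, whose three parts are essentially immediate repackagings of Theorem \ref{universaltorsortheorem} together with Theorem \ref{maingeometrictheorem} and the definition of the Cox ring. First I would prove (1): by Theorem \ref{universaltorsortheorem}(2), $\shA^{\ft}_{\prin}={\bf Spec}\bigoplus_{m\in M^{\circ}}\shL_m$ where the $\shL_m$ are line bundles on $\shX^{\ft}$ representing the classes in $\Pic(\shX^{\ft})\cong M^{\circ}/p^*(N)$ under Theorem \ref{XePicard}. Taking global sections of the structure sheaf of a scheme affine over $\shX^{\ft}$ gives
\[
\Gamma(\shA^{\ft}_{\prin},\shO_{\shA^{\ft}_{\prin}})=\bigoplus_{m\in M^{\circ}}\Gamma(\shX^{\ft},\shL_m).
\]
By Theorem \ref{maingeometrictheorem}(1), $\shA^{\ft}_{\prin}$ agrees with $\shA_{\prin}$ outside codimension two and both are $S_2$, so the left side is the upper cluster algebra with principal coefficients; and since $\shX^{\ft}$ agrees with $\shX$ (and with $X$) outside codimension two, $\Gamma(\shX^{\ft},\shL_m)=\Gamma(X,\shL_m)$, so one may write the answer with $X$ as stated. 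This is essentially just bookkeeping.

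For (2): when the initial seed is totally coprime, Theorem \ref{maingeometrictheorem}(2) gives that $\shA^{\ft}$ agrees with $\shA$ outside codimension two, so $\Gamma(\shA,\shO_{\shA})=\Gamma(\shA^{\ft},\shO_{\shA^{\ft}})$ is the upper cluster algebra. The point is then to identify $\shA^{\ft}$ with a choice of universal torsor of $X_e$. Recall from Construction \ref{Aprincipaldef} that $\shA=\pi^{-1}(e)=\shA_{t}$ with $t=e\in T_M$, hence $i(t)=e\in T_{K^*}$, and by Theorem \ref{universaltorsortheorem}(1) the map $p_e:\shA^{\ft}_e\to\shX^{\ft}_e$ is a universal torsor for $\shX_e=X_e$ --- but note this part of Theorem \ref{universaltorsortheorem}(1) is stated only for $t$ \emph{very general}, so I need to check that the coprimality hypothesis is exactly what lets the argument go through at $t=e$. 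Examining the proof of Theorem \ref{universaltorsortheorem}, the only place genericity of $t$ is used is to guarantee (via Lemma \ref{keylemma}(3)) that $\mu_{w,w'}:U_{\s_w,\shA_t}\dasharrow U_{\s_{w'},\shA_t}$ is an isomorphism outside codimension two; but for $t=e$ this is exactly Lemma \ref{keylemma}(2), which holds under the coprime hypothesis. So the construction of $\psi_e$ and the shifting family in that proof applies verbatim with $t=e$ under the totally coprime assumption, yielding that $p_e:\shA^{\ft}\to\shX^{\ft}_e$ is a universal torsor for $X_e$. Taking global sections of $\shR/\shI$ then gives, by the definition of the Cox ring, that $\Gamma(\shA^{\ft},\shO_{\shA^{\ft}})=\Cox(X_e)$, and hence the upper cluster algebra is $\Cox(X_e)$.

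For (3): for $t\in T_M$ very general, Theorem \ref{maingeometrictheorem}(3) gives $\Gamma(\shA_t,\shO_{\shA_t})=\Gamma(\shA_t^{\ft},\shO_{\shA_t^{\ft}})$, and Theorem \ref{universaltorsortheorem}(1) directly says $p_t:\shA^{\ft}_t\to\shX^{\ft}_{\phi}$ is a universal torsor for $\shX_\phi$ with $\phi=i(t)$; taking global sections of $\shR_\phi/\shI$ identifies this ring with $\Cox(X_{i(t)})$ (again using that $\shX^{\ft}_\phi$ agrees with $X_\phi$ outside codimension two, and that the Cox ring is computed from a scheme affine over the base via global sections of $\shR/\shI$). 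The main --- really the only --- obstacle is the bookkeeping in part (2): verifying carefully that the proof of Theorem \ref{universaltorsortheorem}(1) is robust enough to run at the non-generic point $t=e$ once coprimality replaces genericity, i.e.\ that nothing else in that proof secretly used $t$ general. Everything else is formal: "global sections of $\bigoplus\shL_m$ over a scheme affine over $X$" is the definition of the Cox ring (for $\Lambda=M^\circ$ surjecting onto $\Pic X$ with kernel $\Lambda_0=p^*(N)$, which is precisely the shifting-family data produced in that proof), and the passage between $\ft$-truncations and the full cluster varieties is exactly Theorem \ref{maingeometrictheorem} together with the $S_2$ property.
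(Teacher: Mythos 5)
Your overall route is the same as the paper's: identify the upper cluster algebra(s) with functions on the two-layer (or $\ft$-truncated) spaces via Theorem \ref{maingeometrictheorem} and Lemma \ref{keylemma}/\ref{torusgluinglemma2}, then invoke Theorem \ref{universaltorsortheorem} and the definition of the Cox ring; parts (1) and (3) are fine as written.

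The one weak point is your handling of (2). You claim that, examining the proof of Theorem \ref{universaltorsortheorem}, the \emph{only} use of genericity of $t$ is via Lemma \ref{keylemma}(3), so that under total coprimality the statement ``$p_e:\shA^{\ft}\to\shX^{\ft}_e$ is a universal torsor for $X_e$'' holds at $t=e$. That is not quite justified: genericity of $t$ (equivalently of $\phi=i(t)$) is also what lets one identify the \emph{fibre} $\shX^{\ft}_\phi$ with $X_\phi$ up to codimension two. The total spaces $X$ and $\shX^{\ft}$ agree off a codimension-two set with no hypothesis, but that set can meet a special fibre in a divisor (this is exactly why Lemma \ref{maintheoremrank2}(2) requires $\phi$ very general), and the coprimality hypothesis only controls the $\shA$-side mutations, not the $\shX$-side fibres. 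So your substitution of Lemma \ref{keylemma}(2) for \ref{keylemma}(3) repairs only half of what genericity was doing, and the stronger $\ft$-level torsor claim at $t=e$ remains unproved. Fortunately the corollary does not need it: the first sentence of Theorem \ref{universaltorsortheorem}(1) asserts, for \emph{every} $t\in T_M$ (its proof for the two-layer spaces $A_t$, $X_\phi$ uses no genericity), that $p_t:A_t\to X_{i(t)}$ is a universal torsor, so $\Gamma(A_e,\shO_{A_e})=\Cox(X_e)$ directly; combining this with the identification $\Gamma(\shA,\shO_{\shA})=\Gamma(\shA^{\ft},\shO_{\shA^{\ft}})=\Gamma(A_e,\shO_{A_e})$, which you already established from total coprimality via Theorem \ref{maingeometrictheorem}(2) and Lemma \ref{keylemma}(4), gives (2). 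This is precisely the paper's route: it works entirely with the two-layer spaces $A$, $X$ and never needs the $\ft$-level torsor statement at a special value of $t$.
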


\begin{proof} This follows because with the hypotheses, the upper cluster
algebra $\Gamma(\shA,\shO_{\shA})$ coincides with $\Gamma(A,\shO_A)$ 
by Theorem \ref{maingeometrictheorem} and Lemma \ref{torusgluinglemma2}.
The latter algebra has the desired description by Theorem 
\ref{universaltorsortheorem}. The principal coefficient case is similar.
\end{proof}

\begin{corollary} \label{aprinut} If $\Pic(X)$ is torsion free (i.e., if $M^{\circ}/p^*(N)$ is
torsion free) then the upper cluster algebra with principal coefficients
$\Gamma(\shA_{\prin},\shO_{\shA_{\prin}})$ and for very general $t$,
the upper cluster algebra with general coefficients 
$\Gamma(\shA_t,\shO_{\shA_t})$ are factorial.
If the initial seed is totally coprime, then the upper cluster 
algebra $\Gamma(\shA,\shO_{\shA})$ is
factorial.
\end{corollary}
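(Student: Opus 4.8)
The plan is to deduce the corollary from the identifications of these upper cluster algebras with Cox rings established in Corollary~\ref{coxcor} (and Theorem~\ref{universaltorsortheorem}), together with the fact recalled in the introduction that the Cox ring of a variety with finitely generated torsion-free Picard group is factorial (\cite{Ar08}, \cite{BH03}). The crucial input is Theorem~\ref{XePicard}, which says $\Pic(X)\cong\Pic(X_\phi)\cong M^\circ/p^*(N)$; this group is finitely generated, and is torsion free exactly under the hypothesis of the corollary.

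For the general coefficients case I would argue as follows. By Corollary~\ref{coxcor}(3), for very general $t\in T_M$ the algebra $\Gamma(\shA_t,\shO_{\shA_t})$ is isomorphic to the Cox ring of $X_{i(t)}$. Now $X_{i(t)}$ is a finite-type scheme, normal (in fact smooth, being glued from tori), and $\Pic(X_{i(t)})\cong M^\circ/p^*(N)$ is finitely generated and torsion free by assumption; hence the cited factoriality result applies and $\Gamma(\shA_t,\shO_{\shA_t})$ is factorial. The totally coprime case is identical: by Corollary~\ref{coxcor}(2), $\Gamma(\shA,\shO_{\shA})$ is then the Cox ring of $X_e$, and the same argument gives factoriality.

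The principal coefficients case needs one extra observation. By Corollary~\ref{coxcor}(1), the upper cluster algebra with principal coefficients is $R:=\bigoplus_{m\in M^\circ}\Gamma(X,\shL_m)$, where $\shL_m$ depends on $m$ only through its class in $\Pic(X)=M^\circ/p^*(N)$. Since this quotient is torsion free, $p^*(N)$ is saturated in $M^\circ$, so we may choose a splitting $M^\circ=p^*(N)\oplus Q$ such that the projection $M^\circ\to M^\circ/p^*(N)=\Pic(X)$ restricts to an isomorphism on $Q$. Using the shifting family $\rho_{p^*(n)}(e_m)=e_{m+p^*(n)}z^{-n}$ from the proof of Theorem~\ref{universaltorsortheorem} to trivialize compatibly the graded pieces of $\bigoplus_m\shL_m$ indexed by $p^*(N)$, one obtains a ring isomorphism $R\cong\Cox(X)\otimes_\kk\kk[p^*(N)]$, a Laurent polynomial ring over the Cox ring of $X$. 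As $\Pic(X)$ is finitely generated and torsion free, $\Cox(X)$ is factorial by \cite{Ar08}, \cite{BH03}, and hence so is $R$.

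The one point requiring care is whether the cited factoriality theorem for Cox rings genuinely applies to $X$ and the $X_\phi$, which a priori may be non-separated prevarieties (cf.\ the remark following Theorem~\ref{XePicard}). I do not expect this to be a real obstacle: $X$ and $X_\phi$ are smooth and of finite type with finitely generated Picard group, $\Gamma$ of a line bundle is unchanged upon deleting a closed subset of codimension $\ge 2$, and the Cox ring construction and the UFD criterion of \cite{BH03} are essentially local in nature, so one either applies the result directly for normal prevarieties or passes to a suitable separated open subset. Beyond this technicality, the argument is pure bookkeeping with the results already proved in \S\ref{codimtwosection} and \S\ref{univtorsorsection}.
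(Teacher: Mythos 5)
Your proposal is correct, and for the $\shA$ and $\shA_t$ cases it is exactly the paper's argument: Corollary \ref{coxcor} identifies those upper cluster algebras with Cox rings of $X_e$, resp.\ $X_{i(t)}$, whose Picard group $M^\circ/p^*(N)$ is finitely generated and torsion free, so the factoriality theorem of \cite{Ar08} (see also \cite{BH03}, Prop.\ 8.4) applies. For the principal case your route differs in presentation from the paper's: the paper forms the intermediate quotient $X'=A_{\prin}/T_{\Pic(X)^*}$, observes that $X'\rightarrow X$ is a trivial $T_{(p^*(N))^*}$-torsor (using $\shL_m\cong\shO_X$ for $m\in p^*(N)$, which needs torsion-freeness so that $T_{\Pic(X)^*}$ is a subtorus of $T_{N^\circ}$), notes $\Pic(X')\cong\Pic(X)$ and that $A_{\prin}$ is the universal torsor over $X'$, and then cites the same Cox-ring theorem once more, now for $X'$. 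You instead use torsion-freeness to split $M^\circ=p^*(N)\oplus Q$, use the multiplicative trivializations $\rho_{p^*(n)}$ to rewrite $\bigoplus_{m\in M^\circ}\Gamma(X,\shL_m)$ as the Laurent polynomial ring $\Cox(X)\otimes_\kk\kk[p^*(N)]$, and conclude by Gauss's lemma plus localization. These are really the same computation seen from two sides, since $\Cox(X')\cong\Cox(X)\otimes_\kk\kk[p^*(N)]$ for $X'\cong X\times T_{(p^*(N))^*}$; your version makes the final UFD step elementary and makes explicit where torsion-freeness (saturation of $p^*(N)$) enters, while the paper's version lets it quote the cited theorem verbatim without any ring-level bookkeeping. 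The separatedness caveat you raise is not treated in the paper either (it cites \cite{Ar08}, \cite{BH03} directly), so flagging it and noting that sections of line bundles are insensitive to codimension-two modifications is a reasonable, not a missing, point.
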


\begin{proof}
For the cases other than $\shA_{\prin}$, this follows from
Theorem 1.1 of \cite{Ar08},
(see also \cite{BH03}, Prop.\ 8.4.) 

For the principal case, we note that the map $\tilde p:\shA_{\prin}\rightarrow
\shX$ is a $T_{N^{\circ}}$-torsor, and that if $\Pic(X)$ is torsion-free,
then $\Pic(X)^*\subseteq N^{\circ}$ and
$T_{\Pic(X)^*}$ is a subtorus of $T_{N^{\circ}}$. 
Write $X'=A_{\prin}/T_{\Pic(X)^*}$. Note
that $X'={\bf Spec}\, \bigoplus_{m\in p^*(N)} \shL_m$. But $\shL_m\cong
\shO_X$ as a line bundle for $m\in p^*(N)$, so $X'\rightarrow X$ is
a trivial $T_{(p^*(N))^*}$-torsor. In particular, $\Pic(X')\cong\Pic(X)$ and
$A_{\prin}$ is the universal torsor over $X'$. The above cited results show
the Cox ring of $X'$ is a UFD, so the upper cluster algebra with principal
coefficients is also a UFD.
\end{proof}

\section{The $\cX$ variety in the $\rank\epsilon=2$ case} \label{rk2sec}

In this section we will fix seed data as usual, with the same assumptions
as in the previous section, namely that there are no frozen variables
and that no row (or column) of $\epsilon$ is zero. We will assume furthermore
that $\rank \epsilon=2$, i.e., $\rank K = \rank N-2$. In this case, the morphism
$\cX\rightarrow T_{K^*}$ is a flat family of two-dimensional schemes 
(flatness following from the fact that the maps $\shX_\s\rightarrow T_{K^*}$
are flat for each seed). We can use the description of the $\cX$ variety
given in \S\ref{codimtwosection} to develop a geometric feeling for this family.

Now $K^{\perp}\subseteq M$ is a saturated rank two sublattice by the 
assumption on the rank of $\epsilon$. Furthermore, $d_iv_i\in K^{\perp}$
for each $i$ and these vectors are non-zero by the assumption on $\epsilon$.
Choose a complete non-singular 
fan $\oSigma$ in $K^{\perp}$ such that each $-d_iv_i$ generates a 
ray of $\oSigma$.
Via the inclusion $K^{\perp}\subseteq M$, $\oSigma$ also
determines a fan in $M$, which we denote by $\Sigma$. Note $\Sigma$
contains the fan of one-dimensional cones $\Sigma_{\s,\cX}$. Then the 
projection $M\rightarrow K^*\cong M/K^{\perp}$ induces a
map 
\[
\bar\lambda:\TV(\Sigma)\rightarrow T_{K^*},
\] 
each of whose fibres is a complete toric
surface $\TV(\oSigma)$; we in fact have non-canonically 
$\TV(\Sigma)\cong \TV(\oSigma)\times T_{K^*}$, arising from a choice
of splitting $M=K^{\perp} \oplus K^*$. 

Let $D_i$ denote the divisor of $\TV(\Sigma)$ corresponding to the ray
generated by $-d_iv_i$. For each $i$ we obtain a (possibly non-reduced)
hypersurface $Z_i\subseteq D_i$ given by 
\[
Z_i:=D_i\cap\oV\big((1+z^{e_i})^{\ind d_iv_i}\big)
\]
as in \eqref{ZAXdef}. 

\begin{lemma}
\label{sectionlemma}
The underlying closed subset of $Z_i$ is the image of
a section $q_i:T_{K^*}\rightarrow \TV(\Sigma)$ of $\bar\lambda$ if and only if 
the image of $e_i$ in $N/K$ is primitive.
\end{lemma}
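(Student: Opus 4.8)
The plan is to reduce the statement to lattice arithmetic involving the chain $K\subseteq N_i\subseteq N$, where $N_i$ will be the character lattice of the dense torus of $D_i$. Let $u_i\in M$ be the primitive generator of the ray $-\RR_{\ge 0}d_iv_i$, so $-d_iv_i=(\ind d_iv_i)u_i$, and set $N_i:=u_i^\perp\cap N$. Since $K=\ker p^*$ is the radical of $\{\cdot,\cdot\}$ and $d_iv_i\in K^\perp$, we have $K\subseteq N_i$; since $\langle e_i,u_i\rangle$ is a scalar multiple of $\langle e_i,v_i\rangle=\{e_i,e_i\}=0$, also $e_i\in N_i$; and $e_i\notin K$ because the $i$-th row of $\epsilon$ is nonzero. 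As $N_i$ is saturated in $N$ and $K$ is saturated in $N$, the group $K$ is saturated in $N_i$, so $N_i/K$ is free of rank $\rank N_i-\rank K=1$, and in fact $N_i/K=\QQ\bar e_i\cap(N/K)$ is the rank-one saturation of $\ZZ\bar e_i$ inside $N/K$, where $\bar e_i:=e_i+K$.

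Next I would describe the geometry near $D_i$. The toric morphism $\bar\lambda$ restricts on the dense torus $O_{\rho_i}=\Spec\kk[N_i]\subset D_i$ to the surjective homomorphism of tori dual to the inclusion $K\hookrightarrow N_i$, which has one-dimensional (connected) kernel $\Spec\kk[N_i/K]\cong\Gm$; correspondingly $D_i\to T_{K^*}$ is the associated $\PP^1$-bundle, and non-canonically $D_i\cong\PP^1\times T_{K^*}$ as in the discussion preceding the lemma. I would then check that, set-theoretically, $Z_i$ is the closure in $D_i$ of the coset $\{z^{e_i}=-1\}\subset O_{\rho_i}$: the exponent $\ind d_iv_i$ and the passage to the closure from $T_M$ change nothing, because $e_i\in N_i$ makes $z^{e_i}$ a unit on $O_{\rho_i}$; and $Z_i$ contains no boundary section $S$ of $D_i\to T_{K^*}$, since $z^{e_i}$ restricts on $S$ to a character of a one-dimensional torus that, if trivial, would force $e_i\in(K^\perp)^\perp=K$ (as $K$ is saturated), a contradiction, so $z^{e_i}$ has a zero or a pole along $S$ and $1+z^{e_i}$ does not vanish there. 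Hence $(Z_i)_{\red}$ is the closure of $\{z^{e_i}=-1\}$, a divisor in $D_i$ that is quasi-finite, hence finite, over $T_{K^*}$.

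Now $(Z_i)_{\red}$ is the image of a section of $\bar\lambda$ exactly when $\bar\lambda$ restricts to an isomorphism $(Z_i)_{\red}\xrightarrow{\sim}T_{K^*}$; since this restriction is finite and $T_{K^*}$ is normal, Zariski's main theorem reduces this to: $(Z_i)_{\red}$ is irreducible and the map has degree one. The coset $\{z^{e_i}=-1\}\subset O_{\rho_i}$ is a torsor under $\ker(z^{e_i}\colon O_{\rho_i}\to\Gm)$, a diagonalizable group with character group $N_i/\ZZ e_i$; it is irreducible precisely when $N_i/\ZZ e_i$ is torsion-free, i.e.\ $e_i$ is primitive in $N_i$, and in that case $\bar\lambda$ restricts on it to the isogeny dual to the injection $K\hookrightarrow N_i/\ZZ e_i$ (injective since $\ZZ e_i\cap K=0$), of degree $|N_i/(K+\ZZ e_i)|$. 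Thus irreducibility together with degree one is equivalent to the single condition $K+\ZZ e_i=N_i$ (which already forces $e_i$ primitive in $N_i$, using $e_i\notin K$ and $K$ saturated). Finally $K+\ZZ e_i=N_i$ says exactly that $\bar e_i$ generates $N_i/K\cong\ZZ$, and since $N_i/K$ is the saturation of $\ZZ\bar e_i$ in $N/K$, this holds if and only if $\bar e_i$ is primitive in $N/K$, which is the assertion. I expect the one genuinely delicate point to be the set-theoretic identification of $Z_i$ with the closure of the torus coset $\{z^{e_i}=-1\}$ and the verification that it is horizontal (finite, with no boundary-section component) over $T_{K^*}$; once that is in place the remainder is elementary.
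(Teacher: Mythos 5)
Your proposal is correct in substance, and it reaches the statement by a more global route than the paper's. The paper argues fibrewise: choosing a splitting $M=K^\perp\oplus K^*$ and dually $N\cong N/K\oplus K$, writing $e_i=(e_i',e_i'')$, it restricts $1+z^{e_i}$ to the $\PP^1$-fibre of $D_i\rightarrow T_{K^*}$ over $\phi$, where it becomes $1+z^{e_i''}(\phi)\,z^{e_i'}$ on the dense $\Gm=\Spec\kk[(d_iv_i)^\perp\cap (N/K)]$, and observes that its zero locus is a single point if and only if $e_i'$ is primitive; the upgrade from ``one point in every fibre'' to ``image of a section,'' and the fact that $Z_i$ avoids the toric boundary of $D_i$, are left implicit. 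You instead identify $(Z_i)_\red$ with the coset $\{z^{e_i}=-1\}$ in the dense orbit $O_{\rho_i}=\Spec\kk[N_i]$, note that $\bar\lambda$ restricts there to a translate of the isogeny dual to $K\hookrightarrow N_i/\ZZ e_i$, of degree $[N_i:K+\ZZ e_i]$, and conclude via Zariski's main theorem. The core lattice content (primitivity of $\bar e_i$ in the rank-one saturated sublattice $N_i/K\subseteq N/K$) is identical, but your packaging makes explicit exactly the two points the paper elides, which is what the extra length buys.

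Two small corrections. First, the irreducibility discussion is unnecessary: $e_i$ is a basis vector of $N$ and $N_i$ is saturated in $N$, so $e_i$ is automatically primitive in $N_i$ and the coset is a translated subtorus; only the degree condition $K+\ZZ e_i=N_i$ carries content. Second, the phrase ``$z^{e_i}$ restricts on $S$ to a character of a one-dimensional torus'' is off, since a boundary section $S$ of $D_i$ is the orbit of a two-dimensional cone $\sigma$ and is isomorphic to $T_{K^*}$, of dimension $\rank N-2$. The correct statement is that the order of vanishing of $z^{e_i}$ along $S$, viewed as a boundary divisor of $D_i$, is $\langle e_i,u'\rangle$ for $u'$ the second primitive generator of $\sigma$; this is nonzero because $\sigma$ spans $K^\perp\otimes\RR$ and $e_i\notin K=(K^\perp)^\perp\cap N$. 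Note that this argument in fact shows that $1+z^{\pm e_i}$ is nonvanishing at \emph{every} point of the boundary orbits of $D_i$, so $Z_i\cap(D_i\setminus O_{\rho_i})=\emptyset$. You need this stronger statement (or else a purity-of-dimension argument: every component of $Z_i$ has dimension at least $\rank N-2$, so a component inside a boundary section would be the whole section), because merely knowing that no boundary section is \emph{contained} in $Z_i$ would still allow $Z_i$ to pick up a lower-dimensional piece in the boundary, which would already spoil the section conclusion. With that reading, your sketch is complete.
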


\begin{proof}
A choice of splitting $M=K^{\perp}\oplus K^*$ gives a dual splitting
$N\cong N/K\oplus K$. Write $e_i=(e_i',e_i'')$ under this splitting.
The monomial $z^{e_i}$ is 
non-vanishing on $D_i$ as $\langle e_i,-d_iv_i\rangle=0$.
Then restricting $z^{e_i}$ to $\bar\lambda^{-1}(\phi)\cap D_i$ for some
$\phi\in T_{K^*}$, we obtain a monomial $(z^{e_i''}(\phi))\cdot 
z^{e_i'}\in\kk[(d_iv_i)^{\perp}]$, where $(d_iv_i)^{\perp}$ is a sublattice
of $N/K$. Thus $Z_i \cap \bar\lambda^{-1}(\phi)\cap D_i$ consists of a single
point if and only if $e_i'$ is primitive, i.e., the image of $e_i$ is primitive
in $N/K$.
\end{proof}

The following is an enhanced restatement of Theorem \ref{maingeometrictheorem}. 

\begin{lemma}
\label{maintheoremrank2}
Let $\cY\rightarrow T_{K^*}$ be the flat family of
surfaces obtained by blowing up the subschemes $Z_i\subseteq \TV(\Sigma)$
in some order. Let $\cD\subseteq\cY$ be the proper transform of the
toric boundary of $\TV(\Sigma)$, $\lambda:\cY\setminus\cD\rightarrow
T_{K^*}$ the induced map. Then 
\begin{enumerate} 
\item $\cX^{\ft}$ isomorphic to $\cY\setminus\cD$ off of a codimension $\ge 2$
set.
\item If $\phi\in T_{K^*}$ is very general (i.e., in the complement
of a countable union of proper closed subsets),
then $\lambda^{-1}(\phi)$ is isomorphic to
the fibre $\shX^{\ft}_\phi$ of $\shX^{\ft}\rightarrow T_{K^*}$ 
away from codimension two.
\end{enumerate}
\end{lemma}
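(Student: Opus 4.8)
The plan is to deduce both statements from Theorem~\ref{maingeometrictheorem}(1) by recognizing $\cY\setminus\cD$ as one of the spaces $U_{\s,\cX}$ of \S\ref{codimtwosection}, altered only in codimension two. The fan $\Sigma$ on $M$ contains the one-dimensional fan $\Sigma_{\s,\cX}=\{0\}\cup\{-\RR_{\ge 0}d_iv_i\}$ as a subfan, so $\TV_{\s,\cX}=\TV(\Sigma_{\s,\cX})$ is an open subscheme of $\TV(\Sigma)$ whose complement is a union of torus orbits attached to the extra rays and $2$-cones of $\Sigma$; this complement lies inside the toric boundary and meets each $D_i$ only in codimension $\ge 2$ in $\TV(\Sigma)$. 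First I would check that for each $i$ the center $Z_i=D_i\cap\oV\big((1+z^{e_i})^{\ind d_iv_i}\big)$ formed inside $\TV(\Sigma)$ agrees with the one formed inside $\TV_{\s,\cX}$ away from a closed subset of codimension $\ge 3$ in $\TV(\Sigma)$: on a deeper orbit stratum $S\subseteq D_i$ the restriction of the character $z^{e_i}$ is either identically $0$, identically $\infty$, or a nonconstant character of $S^\circ$ on a stratum of codimension $\ge 2$, so in every case $1+z^{e_i}$ contributes to $Z_i$ only along $D_i\cap\TV_{\s,\cX}$ or along such a codimension $\ge 3$ locus. Blowing up $\bigcup_i Z_i$ and deleting the proper transform of the whole toric boundary then produces a scheme $\cY\setminus\cD$ which is isomorphic, outside a closed subset of codimension $\ge 2$, to $U_{\s,\cX}=\tTV_{\s,\cX}\setminus D$: the new boundary divisors of $\TV(\Sigma)$ are all removed in forming the complement of $\cD$, and the exceptional divisors over the $Z_i$ are unaffected except near the deeper strata. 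This identification is the identity on the common open torus $T_M=\cX_\s$.

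Granting this, Theorem~\ref{maingeometrictheorem}(1) supplies a birational map $\cY\setminus\cD\dasharrow U_{\s,\cX}\dasharrow\cX^{\ft}$ that is an isomorphism outside codimension two, which is (1). For (2) I would first note that this comparison map lies over $T_{K^*}$: both $\bar\lambda$ and the map $\lambda\colon\cX\to T_{K^*}$ of Remark~\ref{extrastructuresremark}(1) restrict on $T_M$ to the torus homomorphism dual to the inclusion $K\hookrightarrow N$, and $T_M$ is dense. Let $W\subseteq\cY\setminus\cD$ and $W'\subseteq\cX^{\ft}$ be the complements of the open sets on which the map restricts to an isomorphism; after fixing a finite subtree of $\foT$ these are closed of codimension $\ge 2$ in $n$-dimensional schemes, where $n=\rank N$, so $\dim W\le n-2=\dim T_{K^*}$. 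Decomposing $W$ into its finitely many irreducible components $W_\alpha$: each either fails to dominate $T_{K^*}$, or dominates it with generic fibre dimension $\le\dim W_\alpha-(n-2)\le 0$, so there is a dense open $V_\alpha\subseteq T_{K^*}$ over which $W_\alpha$ meets the fibre in a finite set; likewise for $W'$. Hence for $\phi$ outside the resulting union of proper closed subsets --- a fortiori for $\phi$ very general --- $W\cap\lambda^{-1}(\phi)$ and $W'\cap\cX^{\ft}_\phi$ are finite, while $\lambda^{-1}(\phi)$ and $\cX^{\ft}_\phi$ are $2$-dimensional by flatness of the two families over $T_{K^*}$; the isomorphism of (1) then restricts to an isomorphism $\lambda^{-1}(\phi)\cong\cX^{\ft}_\phi$ away from these finite sets, which is (2).

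The hard part will be the first step: confirming that enlarging the fan from $\Sigma_{\s,\cX}$ to the complete fan $\Sigma$ alters $\cY\setminus\cD$ only in codimension two. The entire content lies in the new boundary strata of $\TV(\Sigma)$ --- one must verify that the $Z_i$ acquire no component of codimension $<2$ along those strata, and that the blow-up and the passage to the complement of $\cD$ interact as expected near the intersections $Z_i\cap D_j$. Everything afterward is formal: invoking Theorem~\ref{maingeometrictheorem}, observing the comparison map lies over $T_{K^*}$, and an elementary dimension count.
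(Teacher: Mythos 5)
Your proposal is correct, and for part (1) it is essentially the paper's own route: identify $\cY\setminus\cD$ with $U_{\s,\cX}$ up to codimension two and quote Theorem \ref{maingeometrictheorem}(1). One caveat there: $\cY$ is defined by blowing up the $Z_i$ \emph{in some order}, whereas $U_{\s,\cX}$ comes from the blow-up of the single subscheme $\bigcup_i Z_i$; you silently identify the two. This is harmless only because the pairwise intersections $Z_i\cap Z_j$ (which can be nonempty in the $\cX$ case, when $D_i=D_{i'}$) have codimension $\ge 3$, so the two blow-ups agree off codimension two --- this is exactly the one observation the paper's proof of (1) records, and it deserves the same sentence of justification you gave to enlarging the fan from $\Sigma_{\s,\cX}$ to $\Sigma$ (a point the paper, conversely, leaves implicit; your trichotomy for $z^{e_i}$ on the deeper strata is the right verification). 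For part (2) your route is genuinely different. The paper reruns the construction fibrewise: it checks that for $\phi\notin\bar\lambda(Z_i\cap Z_j)$ the fibre is glued from the fibre tori along the \emph{maximal} fibrewise gluing sets, so that Lemma \ref{torusgluinglemma2} applies to the fibre, and then repeats the arguments of Lemma \ref{keylemma} and Theorem \ref{maingeometrictheorem} for each mutation, imposing the analogous avoidance conditions for the mutated seeds. You instead note that the codimension-two isomorphism of (1) is a morphism over $T_{K^*}$ (by density of $T_M$ and separatedness of $T_{K^*}$), restrict it to fibres, and use a dimension count to see the two bad loci meet a general fibre in finite sets, which are codimension two in the pure two-dimensional fibres. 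Your argument is more elementary, sidesteps the delicate point that the scheme-theoretic fibre of the glued family need not a priori coincide with the fibrewise-maximal gluing, and actually proves the statement for general $\phi$ (outside a proper closed subset, for a fixed finite subtree), not merely very general $\phi$. What the paper's longer argument buys is the explicit locus of $\phi$ to avoid, namely $\bar\lambda(Z_i\cap Z_j)$ and its analogues for the mutated seeds, together with the seed-by-seed identification of the fibre with the fibrewise-glued surface; your version only provides the abstract isomorphism away from codimension two, which is, however, all that the statement asserts.
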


\begin{proof}
(1) is immediate from Theorem \ref{maingeometrictheorem},
observing that
blowing up the $Z_i$ in some order differs only in codimension two
with the blow-up of the subscheme $\bigcup_i Z_i$.
For (2), we first
use the explicit description of $X$ as described at the beginning of
\S\ref{univtorsorsection}. Indeed, $X$ is obtained by gluing together
tori $U_i$, $0\le i \le n$ as described explicitly in 
the proof of Theorem \ref{XePicard}. Denote by $U_{i,\phi}$, $U_{ij,\phi}$ the
fibres of $U_i, U_{ij}\rightarrow T_{K^*}$ over $\phi$. 
If $Z_i\cap Z_j\cap\bar\lambda^{-1}(\phi)
=\emptyset$, it is then easy to see that the maximal open set of the domain
for which the map $\varphi_{ij}|_{U_{i,\phi}}:U_{i,\phi}
\dasharrow U_{j,\phi}$ is an isomorphism is precisely $U_{ij,\phi}$. 
Thus $X_\phi$ is constructed as the space $X$ is in Lemma \ref{torusgluinglemma2}.
The schemes $Z_i$ in that construction coincide with the schemes
$\bar\lambda^{-1}(\phi)\cap Z_i$. Thus, provided $\phi$ does not lie
in $\bar\lambda(Z_i\cap Z_j)$
for any $i,j$, Lemma \ref{torusgluinglemma2}
applies to show that there is an open immersion $X_\phi \rightarrow \lambda^{-1}
(\phi)$ which is an isomorphism off of a codimension two subset of
$\lambda^{-1}(\phi)$. 

To complete the argument, we follow the proofs of Lemma \ref{keylemma}
and Theorem 
\ref{maingeometrictheorem}. If $\s'=\mu_k(\s)$ and $X'$, $\cY'$, $Z_i'$
etc.\ are constructed using the seed $\s'$, then the argument of Lemma
\ref{keylemma} shows that provided $\phi\not\in \lambda(Z_i\cap Z_j),
\lambda'(Z_i'\cap Z_j')$ for any pair $i\not=j$, $\lambda^{-1}(\phi)$
is isomorphic to $(\lambda')^{-1}(\phi)$ off codimension two. 
Thus $X_{\phi}$ and $X'_{\phi}$ are
isomorphic off a set of codimension two, and the argument is finished
as in Theorem \ref{maingeometrictheorem}.
\end{proof}

Thus the family $\cX^{\ft}\rightarrow T_{K^*}$ can be thought of, away from
codimension two, as a family of surfaces obtained by blowing up a collection
of points on the boundary of a toric variety, and then deleting the 
proper transform of the boundary. In general, since these points are being
blown up with multiplicity, $\cY\setminus\cD$ can be singular. We will first
see that any surface obtained via blowups on the boundary of a toric
surface is deformation equivalent
to any surface in the family $\cY\rightarrow T_{K^*}$ constructed using some 
seed.

\begin{construction}
\label{toricblowupconstruction}
Let $\oY$ be a complete non-singular
toric surface, with toric boundary $\oD$, given by a fan $\oSigma$ in a 
lattice $\oN\cong\ZZ^2$. Choose a collection of irreducible boundary divisors
$\oD_1,\ldots,\oD_n$ (possibly with repetitions) and let $w_i\in\oN$ be the
primitive generator of the ray corresponding to $\oD_i$. Fix positive integers
$\nu_i$, $1\le i\le n$. Suppose that $w_1,\ldots,w_n$ generate $\oN$.

We will use this data to construct seed data, as follows.
Set $N=\ZZ^n$ with basis $\{e_i\}$, $M$ the dual lattice as usual. 
Define a map $\psi:N\rightarrow\oN$ by 
$e_i\mapsto w_i$. By assumption, $\psi$ is surjective.
Choose an isomorphism $\bigwedge^2\oN\cong\ZZ$.
The map $\varphi:\oN\rightarrow M$ given
by $\bar n\mapsto (n\mapsto \psi(n)\wedge \bar n)$ gives a primitive embedding
of the lattice $\oN$ into $M$ by surjectivity of $\psi$.
Let $\nu=\gcd(\nu_1,\ldots,\nu_n)$. 
We then obtain an integer-valued skew-symmetric form $\{\cdot,\cdot\}$ on 
$N$ by 
\[
\{n_1,n_2\}=\nu\psi(n_1)\wedge\psi(n_2)\in\ZZ.
\] 
Note that
$\ker\psi$ coincides with $K=\{ n\in N\,|\, \{n,\cdot\}=0\}$.
Set $d_i=\nu_i/\nu$. This gives us seed data $\{e_i\}$ for the fixed
data $N=N_{\uf}$, $\{\cdot,\cdot\}$ and $\{d_i\}$.

We now analyze the family $\cY\rightarrow T_{K^*}$ arising from this
seed data. Using the inclusion $\varphi$, we write $\Sigma$ for the fan
$\oSigma$ as a fan in $M$. We write $D_i$ for the toric
divisor of $\TV(\Sigma)$ corresponding to the ray generated by $w_i$.
We note that with 
\[
v_i=p^*(e_i)=\{e_i,\cdot\}=-\nu\varphi(\psi(e_i)),
\]
we have $-d_iv_i=\nu_i\varphi(w_i)$. As $\psi$ is surjective, $N/K\cong
\oN$, and the image of each $e_i$ in $N/K$ is primitive, being $w_i\in \oN$. 
Thus by Lemma \ref{sectionlemma}, the closed sets $Z_i$ are images of
sections of $D_i\rightarrow T_{K^*}$. It then follows by
Lemma \ref{maintheoremrank2} that the general fibre of 
$\lambda:\cY\rightarrow T_{K^*}$ is obtained by blowing up $\oY$
at a collection of points $p_1,\ldots,p_n$, with $p_i\in \oD_i$ taken
with multiplicity $\nu_i$.
\end{construction}

We now consider the special case that all $\nu_i=1$. First we note:

\begin{proposition}
\label{niceconditionprop}
Giving
\begin{itemize}
\item fixed data with $\rank N=n$, no frozen variables,
$d_i=1$ for all $i$ and such that
$\{\cdot,\cdot\}$ has rank two and the induced non-degenerate skew-symmetric
pairing on $N/K$ is unimodular, and
\item a seed $\s$ for this fixed data such that the image of each $e_i$
in $N/K$ is primitive;
\end{itemize}
is equivalent to giving primitive vectors 
$w_1,\ldots,w_n\in\oN$ where $\oN$ is a rank two lattice and for which
$w_1,\ldots,w_n$ generate $\oN$.
\end{proposition}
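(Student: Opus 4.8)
The plan is to write down explicit constructions in both directions and check they are mutually inverse, the one point requiring care being the bookkeeping of the skew form. On the cluster side the data remembers $\{\cdot,\cdot\}$, which (having radical $K$ and rank two) descends to a \emph{unimodular} skew form on $N/K$, equivalently to an isomorphism $\bigwedge^2(N/K)\cong\ZZ$; it is exactly this isomorphism that plays the role of the auxiliary choice ``$\bigwedge^2\oN\cong\ZZ$'' made in Construction \ref{toricblowupconstruction}. So I would prove the equivalence in the form: a rank two lattice $\oN$ together with a chosen isomorphism $\bigwedge^2\oN\cong\ZZ$ and primitive vectors $w_1,\dots,w_n$ generating $\oN$, on the one hand, versus fixed data and a seed as in the statement, on the other. (The two possible choices of $\bigwedge^2\oN\cong\ZZ$ correspond to the two signs of $\{\cdot,\cdot\}$, so this is the content of the Proposition as phrased.)

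\textbf{From lattice data to cluster data.} Here I would simply invoke the construction of seed data inside Construction \ref{toricblowupconstruction} with every $\nu_i=1$, so that $\nu=1$ and every $d_i=1$; note this part of that construction uses only the $w_i$ and $\nu_i$, not the ambient fan. The only thing to check is that the output satisfies the hypotheses of the Proposition. Surjectivity of $\psi\colon N\to\oN$ (from the $w_i$ generating $\oN$) shows that $\{n_1,n_2\}=\psi(n_1)\wedge\psi(n_2)$ has radical exactly $K=\ker\psi$ and hence rank two, using nondegeneracy of the wedge pairing on the rank two lattice $\oN$; the form induced on $N/K\cong\oN$ is the wedge pairing, which is unimodular; and the image of each $e_i$ in $N/K\cong\oN$ is $w_i$, which is primitive. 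Thus the construction does produce data of the required type.

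\textbf{From cluster data to lattice data, and the round trips.} Given fixed data and a seed $\s$ as in the statement, I would put $\oN:=N/K$ ($K$ the radical of $\{\cdot,\cdot\}$, so $\oN$ has rank two), with $w_i$ the image of $e_i$; these are primitive by hypothesis and generate $N/K$ since the $e_i$ form a basis of $N$, and the descended unimodular form supplies the isomorphism $\bigwedge^2\oN\cong\ZZ$. To close the loop I would verify two routine statements. First, starting from lattice data and running the construction, the projection $\psi$ identifies $N/K$ with $\oN$, carrying $[e_i]$ to $w_i$ and the descended form to the chosen generator of $\bigwedge^2\oN$, so the cluster-to-lattice map recovers the original lattice data. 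Second, starting from cluster data $(N,\{\cdot,\cdot\},(e_i))$, passing to $(\oN=N/K,(w_i))$, and re-running the construction to get $(N',\{\cdot,\cdot\}',(e_i'))$ with $\psi'\colon N'\to\oN$, the tautological map $N'\to N$, $e_i'\mapsto e_i$, is a lattice isomorphism taking seed to seed and $d_i$ to $d_i$ (all equal $1$); it also matches the forms, since $\{e_i',e_j'\}'=\psi'(e_i')\wedge\psi'(e_j')=[e_i]\wedge[e_j]=\{e_i,e_j\}$ by the definition of the descended form and of $\wedge$ on $N/K$.

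\textbf{Main obstacle.} As indicated, the only genuinely delicate point is making sure the ``choice of $\bigwedge^2\oN\cong\ZZ$'' in Construction \ref{toricblowupconstruction} is not extra information relative to the cluster side but is precisely encoded by the skew form $\{\cdot,\cdot\}$ (equivalently by its unimodular descent to $N/K$), so that the correspondence is genuinely bijective rather than two-to-one. Once one commits to this interpretation, everything reduces to checking that $\psi$ and the tautological map $e_i'\mapsto e_i$ are inverse lattice isomorphisms compatible with all the structure, which is elementary linear algebra over $\ZZ$.
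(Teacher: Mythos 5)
Your proposal is correct and follows essentially the same route as the paper: one direction is Construction \ref{toricblowupconstruction} with all $\nu_i=1$, and the other takes $\oN=N/K$ with $w_i$ the image of $e_i$, using that the unique unimodular skew form on a rank two lattice is the wedge pairing up to a choice of $\bigwedge^2\oN\cong\ZZ$. Your explicit treatment of the sign/orientation bookkeeping and of the two round trips just spells out what the paper compresses into "after making a suitable choice of identification."
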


\begin{proof}
Construction \ref{toricblowupconstruction} explains how to pass
from the data of the $w_i$'s to the fixed and seed data. Here we take
$\nu_i=1$ for all $i$ in that construction.
Conversely, given fixed and seed data
as in the proposition, we take $\oN=N/K$, $w_i$ the image of each
$e_i$. The only unimodular integral skew-symmetric pairing on $\oN$, up
to sign, is given by $\{n_1,n_2\}=n_1\wedge n_2$, after a choice of
identification ${\bigwedge}^2\oN\cong\ZZ$. Thus after making a suitable
choice of identification, the given pairing $\{\cdot,\cdot\}$ on
$N$ agrees with the one described in Construction \ref{toricblowupconstruction}.
\end{proof}

Continuing with the above notation, with $\nu_1=\cdots=\nu_n=1$,
consider the family
\[
\lambda:\cY\rightarrow T_{K^*}
\]
of blown up toric
surfaces. In this case, a general
fibre $\lambda^{-1}(\phi)$ for $\phi\in T_{K^*}$ is obtained by blowing up
reduced
points on the non-singular part of the toric boundary of $\oY$. A fibre
$(Y,D)=(\cY_\phi,\cD_\phi)$ is what we call a \emph{Looijenga pair} in 
\cite{GHK12}. Given a cyclic ordering of the irreducible components
of $D=D'_1+\cdots+D'_r$ one gets a canonical identification of the
identity component $\Pic^0(D)$ of $\Pic D$ with $\Gm$, 
see \cite{GHK12}, Lemma 2.1. (We note the divisors $D_1,\ldots,D_n$
are a possibly proper subset 
of $D_1',\ldots,D_r'$, and the former occur with repetitions and need
not be cyclically ordered). Furthermore, we
define $D^{\perp}\subseteq \Pic(Y)$ by 
\[
D^{\perp}:=\{ H\in \Pic(Y)\,|\, H\cdot D'_i=0 \quad \forall i\}.
\]
Then the \emph{period point} of $(Y,D)$ is the element of $\Hom(D^{\perp},
\Pic^0(D))$ given by restriction.

\begin{theorem}
\label{Xisuniversalfamily}
Let $\pi:Y\rightarrow\oY$ be the blow-up describing $Y$, with
exceptional divisors $E_1,\ldots,E_n$ over $Z_1\cap \bar\lambda^{-1}(\phi),
\ldots,Z_n\cap\bar\lambda^{-1}(\phi)$. 
Then there is a natural isomorphism
\begin{align*}
K\rightarrow& D^{\perp}\\
\sum a_ie_i\mapsto& \pi^*C-\sum a_iE_i
\end{align*}
where $C$ is the unique divisor class such that 
\begin{equation}
\label{Cdef}
C\cdot D'_j=\sum_{i:D_i=D'_j} a_i.
\end{equation}
Under this identification, and the canonical identification
$\Pic^0(D)\cong\Gm$, the period point of $(Y,D)$ in $\Hom(D^{\perp},\Pic^0(D))$ 
coincides with $\phi\in T_{K^*}=\Hom(K,\Gm)$.
\end{theorem}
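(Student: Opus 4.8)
The plan is to verify the two claims — the isomorphism $K \to D^\perp$ and the identification of the period point with $\phi$ — separately, and then check compatibility.

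First I would establish the isomorphism $K \to D^\perp$. The lattice $K \subseteq N = \mathbb{Z}^n$ is, by construction (Construction \ref{toricblowupconstruction}), exactly $\ker(\psi: N \to \oN)$, so $\sum a_i e_i \in K$ iff $\sum a_i w_i = 0$ in $\oN$. I would show that for such $\mathbf{a} = (a_i)$, there is a unique divisor class $C$ on $\oY$ satisfying \eqref{Cdef}: the conditions $C \cdot D'_j = \sum_{i: D_i = D'_j} a_i$ are $r$ linear conditions on $\Pic(\oY)$, and because $\oY$ is a complete nonsingular toric surface, the intersection pairing $\Pic(\oY) \times \Pic(\oY) \to \mathbb{Z}$ restricted to the span of the boundary divisors $D'_1, \ldots, D'_r$ is well understood; the relation $\sum a_i w_i = 0$ is precisely the compatibility needed for the $r$ conditions to be simultaneously solvable (since the relations among the boundary curve classes are generated by $\sum_j \langle w'_j, m\rangle D'_j = 0$ for $m \in \oN^*$, dual to the $w_i = 0$ relation via $\varphi$). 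Then $\pi^* C - \sum a_i E_i$ satisfies $(\pi^*C - \sum a_i E_i)\cdot D'_j = C \cdot D'_j - \sum_{i: D_i = D'_j} a_i = 0$ for every $j$ (using $E_i \cdot \tilde D'_j = 1$ if $D_i = D'_j$ and $0$ otherwise, where $\tilde D'_j$ is the proper transform), so it lands in $D^\perp$. Injectivity is clear since the $E_i$ are linearly independent in $\Pic(Y)$; surjectivity follows by a rank count — $\Pic(Y) = \pi^*\Pic(\oY) \oplus \bigoplus \mathbb{Z} E_i$, the condition to lie in $D^\perp$ imposes $r$ conditions of which $\rank \oN = 2$ fail to be independent, so $\rank D^\perp = (\rank \Pic(\oY) + n) - (r - 2) = n - 2 = \rank K$, and one checks the map is primitive (any $H \in D^\perp$ has $E_i$-coefficients $-a_i$ with $\sum a_i w_i = 0$, forced by $H \cdot \tilde D'_j = 0$).

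Next I would compute the period point. Recall from the proof of Theorem \ref{XePicard} that $X_\phi$ is glued from tori $U_{i,\phi}$ with transition functions, and the gluing data encoding $\phi \in T_{K^*} = \Hom(K, \Gm)$ enters through the values $z^{e_i''}(\phi)$ where $e_i = (e_i', e_i'')$ under a splitting $N = N/K \oplus K$ (as in Lemma \ref{sectionlemma}). Concretely, the center $Z_i \cap \bar\lambda^{-1}(\phi)$ is the point of $D_i \cong T_{(d_iv_i)^\perp}$ cut out by $(z^{e_i''}(\phi)) \cdot z^{e_i'} = -1$, i.e. its location on the boundary curve $D'_j = D_i$ depends on $\phi$ via this monomial. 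The key computation is to express the restriction map $D^\perp \to \Pic^0(D)$ — which sends $\pi^*C - \sum a_i E_i$ to the line bundle $\mathcal{O}_Y(\pi^*C - \sum a_i E_i)|_D$ — in terms of the gluing/position data of the blown-up points, using the canonical identification $\Pic^0(D) \cong \Gm$ of \cite{GHK12}, Lemma 2.1. Blowing up a point $p_i$ on $D'_j$ changes the restriction of a divisor to $D$ by a factor recording the position of $p_i$ along $D'_j \setminus (\text{nodes})$, measured in the $\Gm$-coordinate on $D'_j$ (which the cyclic ordering pins down). Pulling back $C$ from $\oY$ contributes only toric (torus-fixed-point) data and so is trivial in $\Pic^0(D)$, hence the value $\langle \text{period}, \sum a_i e_i\rangle$ is a product over $i$ of (position of $p_i$)$^{a_i}$; I would then match this product, via the explicit formula for $Z_i$ above, with $z^{e_i''}(\phi)$ raised to appropriate powers, i.e. with $\langle \phi, \sum a_i e_i''\rangle = \phi(\sum a_i e_i)$ (the last equality because $\sum a_i e_i \in K$, so its $N/K$-component vanishes and $\sum a_i e_i = \sum a_i e_i''$). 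This is the identity asserted.

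\textbf{The main obstacle} will be the second part: carefully setting up the canonical identification $\Pic^0(D) \cong \Gm$ from \cite{GHK12} and tracking signs and the cyclic-ordering conventions so that the position of each blown-up point $p_i$ on its boundary component is expressed in the \emph{same} $\Gm$-coordinate that the monomial $z^{e_i'}$ on $D_i \cong T_{(d_iv_i)^\perp}$ induces — and then confirming that $\pi^* C$ contributes trivially to $\Pic^0(D)$ while the $-\sum a_i E_i$ term produces exactly the product of point-positions. The bookkeeping is delicate because the $D_i$ may repeat (several $e_i$ mapping to the same boundary curve, contributing $\sum_{i: D_i = D'_j} a_i = C \cdot D'_j$) and because the divisors $D_1, \ldots, D_n$ are only a sub-multiset of the cyclically ordered $D'_1, \ldots, D'_r$; once the normalizations are fixed, the actual identity is a short computation with the explicit transition functions from the proof of Theorem \ref{XePicard}.
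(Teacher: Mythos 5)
Your overall route is the paper's own: part one is proved there via the exact sequence $0\to H_2(\oY,\ZZ)\to\ZZ^r\to\oN\to 0$ (your solvability discussion is the same statement), and part two by restricting $\pi^*C-\sum a_iE_i$ to $D$ and evaluating it under the identification $\Pic^0(D)\cong\Gm$ of \cite{GHK12}, using that the centres are cut out by $1+z^{e_i}=0$. The genuine problem is your step asserting that $\pi^*C$ ``contributes only toric data and so is trivial in $\Pic^0(D)$,'' so that the period is just the product over $i$ of the positions of the $p_i$ raised to the $a_i$. First, that decomposition does not parse term by term: $\shO_Y(\pi^*C)|_D$ has degree $C\cdot\bar D'_j=\sum_{i:D_i=\bar D'_j}a_i$ on the component $D'_j$ and $\shO_Y(-\sum a_iE_i)|_D$ has the opposite degrees, so neither factor lies in $\Pic^0(D)$ by itself --- only the sum does. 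Second, the contribution of $\pi^*C$ is genuinely nontrivial; the paper's proof rests on the normalization from \cite{GHK12}, Lemma 2.6, (1), namely $\shO_{\oY}(C)|_{\oD}\cong\shO_{\oD}\big(\sum_j(C\cdot\bar D'_j)m_j\big)$ with $m_j\in\bar D'_j$ the point where $z^{w'_j}=-1$, and these $m_j$-terms are exactly what makes the answer come out as stated. Indeed $Z_i\cap\bar\lambda^{-1}(\phi)$ is the point with $z^{w'_j}\cdot z^{e_i''}(\phi)=-1$, i.e.\ $z^{w'_j}(p_i)=-(z^{e_i''}(\phi))^{-1}$, so the naive product of positions equals $(-1)^{\sum a_i}\,\phi(\sum a_ie_i)^{-1}$, differing from the asserted $\phi(\sum a_ie_i)$ by an inversion and by a sign character that no orientation convention removes; in the paper's computation it is precisely the factors $(1+z^{w'_j})^{C\cdot\bar D'_j}$ in the trivializing sections $\sigma_j$, coming from the $\pi^*C$-part, that cancel these discrepancies at the nodes. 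So what you defer as ``normalization bookkeeping'' is not bookkeeping only: without the cited description of $\shO_{\oY}(C)|_{\oD}$ your plan computes the wrong element of $\Gm$.

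A smaller point: your surjectivity count is internally inconsistent. You say the $r$ conditions $H\cdot D'_j=0$ have rank $r-2$, which would give $\rank D^\perp=(r-2+n)-(r-2)=n$, not the $n-2$ you conclude. In fact the classes $D'_1,\ldots,D'_r$ are independent in $\Pic(Y)$: the relations $\sum_j\langle m,w'_j\rangle\bar D'_j\sim0$ on $\oY$ do not lift, since $\sum_j\langle m,w'_j\rangle D'_j\sim-\sum_i\langle m,w_i\rangle E_i\neq0$ for $m\neq0$ because the $w_i$ generate $\oN$; this is what yields $\rank D^\perp=(r-2+n)-r=n-2=\rank K$. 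A cleaner way to get surjectivity (presumably the ``easily checked'' of the paper) is direct: write $H=\pi^*A+\sum b_iE_i$; the conditions $H\cdot D'_j=0$ say $A\cdot\bar D'_j=\sum_{i:D_i=\bar D'_j}(-b_i)$, so the exact sequence forces $\sum_i(-b_i)w_i=0$, i.e.\ $\sum_i(-b_i)e_i\in K$ and $A=C$, hence $H$ is in the image.
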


\begin{proof}
Recall the standard description of the second homology group of the
toric variety $\oY$:
\[
0\rightarrow H_2(\oY,\ZZ)\rightarrow \ZZ^r\rightarrow \oN\rightarrow 0,
\]
where the map $\ZZ^r\rightarrow\oN$ takes the $i^{th}$ generator of
$\ZZ^r$ to the primitive generator of the ray of $\oSigma$ corresponding
to the divisor $\oD'_i$. The inclusion $H_2(\oY,\ZZ)\hookrightarrow\ZZ^r$
is given by $\alpha\mapsto (\alpha\cdot D_i')_{1\le i\le r}$. 
Since $\oY$ is a non-singular proper rational surface, 
we have $H_2(\oY,\ZZ)\cong\Pic(\oY)$. In particular, if
$\sum a_ie_i\in K$, then $(\sum_{j: D_j=D_i'} a_j)_{1\le i\le r}
\in H_2(\oY,\ZZ)$. Thus there is a unique element $C\in H_2(\oY,\ZZ)
\cong\Pic(\oY)$ satisfying \eqref{Cdef}. It is then clear
that $\pi^*C-\sum a_iE_i\in D^{\perp}$. That this is an isomorphism
is easily checked.

We now need to calculate $\shO_{Y}(\pi^*C-\sum a_iE_i)|_{D}$. As the
identification $\Pic^0(D)$ with $\Gm$ requires a choice of cyclic ordering
of the $D_i'$, or equivalent of the $\oD_i'$, we order $w_1',\ldots,w_r'$
clockwise as defined using the choice of isomorphism $\bigwedge^2
\oN\rightarrow \ZZ$. In particular, this choice of isomorphism also
allows an identification of $\oN$ with $\oM=\Hom(\oN,\ZZ)$, via
$n \in \bar N$ acts by $n' \mapsto (n'\wedge n)$. Thus, in particular,
$z^{w_i'}$ can be viewed as a coordinate on $D_i'$ which is zero 
on $p_{i,i+1}$, the intersection point of $D_i'$ and $D_{i+1}'$, and
infinite at $p_{i-1,i}$ (all indices taken modulo $r$).

We next note that $\shO_{\oY}(C)|_{\oD}$ was calculated in the proof of
\cite{GHK12}, Lemma 2.6, (1). Let $m_i\in D_i'$ be the point where
$z^{w_i'}$ takes the value $-1$. Then
\[
\shO_{\oY}(C)|_{\oD}=\shO_{\oD}(\sum_{j=1}^r(C\cdot \bar D'_j)m_j).
\]
Thus we have the same identity for the restriction of $\shO_Y(\pi^*C)$
to $D$. So if $E_i\cap D=p_i$, we then have
\[
\shL:=\shO_{Y}(\pi^*C-\sum a_iE_i)|_{D}\cong 
\shO_D\big(-\sum_{i=1}^na_ip_i+\sum_{j=1}^r (C\cdot\bar D_j')m_j\big).
\]
This line bundle is described under the isomorphism $\Pic^0(D)\cong\Gm$
of \cite{GHK12}, Lemma 2.6, as follows. 
We have $\shL|_{D_j'}=\shO_{D_j'}((C\cdot \bar D_j')m_j-\sum_{i:D_i=D_j'} a_ip_i)$.
Viewing this trivial 
line bundle as a subsheaf of the sheaf of rational functions, and
using a splitting $M=K^{\perp}\oplus K^*$, $N=N/K\oplus K$ as in the proof
of Lemma \ref{sectionlemma},
a trivializing section is given by the rational function
\[
\sigma_j:={\prod_{i: D_i=D_j'} (z^{w_j'}\cdot z^{e_i''}(\phi)+1)^{a_i}
\over
(z^{w_j'}+1)^{C\cdot \bar D'_j}}
\]
since $Z_i$ is given by the equation $z^{e_i}+1=0$ and under the choice
of splitting $e_i=(w_i,e_i'')$, with $w_i=w_j'$ if $D_i=D_j'$. The image of 
the line bundle in $\Gm$ is
\[
\prod_{j=1}^r \sigma_{j+1}(p_{j,j+1})/\sigma_j(p_{j,j+1})=
\prod_{i=1}^n (z^{e_i''}(\phi))^{a_i}.
\]
Remembering that we are viewing $\phi\in \Hom(K,\Gm)$, we see that
$z^{e_i''}(\phi)=\phi(e_i'')$. Note that if $\sum a_ie_i\in K$, we have
$\sum a_ie_i=\sum a_ie_i''$. Thus we see that the element of $\Gm$
corresponding to our line bundle is precisely $\phi(\sum a_ie_i)$. 
Thus $\phi$ is the period point of $(Y,D)$.
\end{proof}

This shows that the families $\cY\rightarrow T_{K^*}$ agree with the
universal families of Looijenga pairs constructed in \cite{GHK12}.

We can also use the above observations to define an unexpected mutation
invariant in the situation of Proposition \ref{niceconditionprop}.

\begin{theorem}
\label{symmetricform}
Given fixed data and seed data satisfying the conditions of Proposition
\ref{niceconditionprop}, the isomorphism $K\cong D^{\perp}$ of 
Theorem \ref{Xisuniversalfamily} induces a symmetric integral pairing
on $K$ via the intersection pairing on $D^{\perp}\subseteq \Pic(Y)$.
This symmetric pairing on $K$ is independent of mutation.
\end{theorem}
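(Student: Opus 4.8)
The plan is to show that the symmetric pairing on $K$ defined via the intersection form on $D^\perp \subseteq \Pic(Y)$ does not depend on the choice of seed within a mutation class, by checking invariance under a single mutation $\mu_k$. Since both the fixed data (lattice $N$, form $\{\cdot,\cdot\}$, the sublattice $K$) and the period point $\phi \in T_{K^*}$ are intrinsic and unchanged by mutation, the only thing that varies is the particular realization of $K$ as $D^\perp$ for the particular Looijenga pair $(Y,D) = (\cY_\phi, \cD_\phi)$ attached to the seed. So the content is: different seeds give \emph{different} blown-up toric surfaces $(Y,D)$ and $(Y',D')$, but the resulting isomorphisms $K \cong D^\perp$ and $K \cong (D')^\perp$ intertwine the two intersection pairings.

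First I would fix a seed $\s$ and $\s' = \mu_k(\s)$, and invoke Theorem \ref{maingeometrictheorem} (in the enhanced form of Lemma \ref{maintheoremrank2}): for very general $\phi$, the open surfaces $\cY_\phi \setminus \cD_\phi$ and $\cY'_\phi \setminus \cD'_\phi$ are isomorphic away from codimension two, both being (up to codimension two) the fibre $\shX^{\ft}_\phi$. By Lemma \ref{keylemma}(1) and the description in \S\ref{codimtwosection}, the birational map $\mu_{w,w'}: \cY_\phi \dasharrow \cY'_\phi$ is realized by an elementary transformation of $\PP^1$-bundles (Lemma \ref{elemtransformlemma}, \ref{elemtransformlemma2}): blow up a point on the boundary divisor $D_k$ and blow down the proper transform of a fibre. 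Concretely this means there is a smooth surface $\tilde Y$ with blowdowns $\tilde Y \to Y$ and $\tilde Y \to Y'$, each contracting a single $(-1)$-curve, and the two boundary cycles $D$, $D'$ pull back to the \emph{same} cycle $\tilde D$ on $\tilde Y$ (this is exactly the assertion that the elementary transformation exchanges the roles of $D_{k,+}$ and $D_{k,-}$ while fixing all other boundary components).

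Next I would compare intersection theory on $Y$, $Y'$, and $\tilde Y$. Pullback along a blowdown of a $(-1)$-curve is an isometric embedding of Picard lattices (with respect to the intersection form), and $D^\perp \hookrightarrow \Pic(Y) \hookrightarrow \Pic(\tilde Y)$ lands inside $\tilde D^\perp$, with image independent of which side ($Y$ or $Y'$) we came from — because $\tilde D$ is the common pullback of both $D$ and $D'$, and a class orthogonal to every component of $D$ is in particular orthogonal to the exceptional curve contracted by $\tilde Y \to Y$ (that curve being a component of $\tilde D$ or a proper transform thereof — one checks the exceptional curve of the elementary transformation is disjoint from or a fiber component not meeting the relevant classes). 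So both $D^\perp$ and $(D')^\perp$ are identified, isometrically, with the same sublattice $\tilde D^\perp \subseteq \Pic(\tilde Y)$. It then remains to check that the composite isomorphisms $K \xrightarrow{\sim} D^\perp \hookrightarrow \tilde D^\perp$ and $K \xrightarrow{\sim} (D')^\perp \hookrightarrow \tilde D^\perp$ agree. For this I would use the explicit formula of Theorem \ref{Xisuniversalfamily}, $\sum a_i e_i \mapsto \pi^*C - \sum a_i E_i$, and track how the exceptional divisors $E_i$ and the mutated exceptional divisors $E_i'$ correspond under the two blowdowns $\tilde Y \to Y$, $\tilde Y \to Y'$, using the seed mutation formula \eqref{etransform} $e_i' = e_i + [\epsilon_{ik}]_+ e_k$ (for $i \neq k$), $e_k' = -e_k$; the sign change at $k$ should match precisely the fact that the elementary transformation replaces a blowup on $D_{k,+}$ by one on $D_{k,-}$.

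The main obstacle, I expect, is the bookkeeping in this last step: making the identification of the two Picard lattices inside $\Pic(\tilde Y)$ genuinely canonical and checking that the formula of Theorem \ref{Xisuniversalfamily} transforms correctly under $\mu_k$, including getting all the signs and the $[\epsilon_{ik}]_+$ coefficients right and confirming that the curve class $C$ (defined by \eqref{Cdef}) transforms compatibly. An alternative, possibly cleaner, route that sidesteps some of this: the intersection pairing on $D^\perp$ equals (up to sign) the pairing induced on $D^\perp$ by its embedding in $H^2(Y \setminus D)$ with its mixed Hodge / monodromy-invariant form, or equivalently can be read off from the pairing on $H_1$ of the boundary together with Mayer--Vietoris; since $Y \setminus D$ is (up to codimension two) the mutation-invariant scheme $\shX^{\ft}_\phi$, such an intrinsically-defined pairing is manifestly mutation-invariant, and one only needs to match it with the intersection pairing on $D^\perp$ for one representative. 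I would try the direct elementary-transformation argument first and fall back to this cohomological description if the bookkeeping becomes unwieldy.
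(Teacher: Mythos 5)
Your overall strategy -- reduce to a single mutation $\mu_k$ and compare the two blowup realizations of the same surface -- is indeed the paper's, but your central geometric claim is misplaced, and the step you defer is precisely where all the content lies. The elementary transformation does \emph{not} relate the fibres $Y=\cY_\phi$ and $Y'=\cY'_\phi$: by Lemma \ref{elemtransformlemma2} the mutation is an isomorphism off codimension two on the blown-up spaces, and for general $\phi$ it restricts to a \emph{biregular} isomorphism $\mu_k\colon Y\to Y'$ carrying $D$ to $D'$. What the elementary transformation relates are the two toric models $\oY$ and $\oY'$ underneath: the common intermediate surface is $Y_k$, the blowup of $\oY$ at the single point $p_k$, which contracts $E_k$ down to $\oY$ and contracts the proper transform of the fibre $F_k=g^{-1}(g(p_k))$ down to $\oY'$ (so that the class $F_k-E_k$ goes to $E_k'$). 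The surface $\tilde Y$ you posit, with blowdowns to $Y$ and to $Y'$ contracting two distinct $(-1)$-curves and with $D$, $D'$ pulling back to a common cycle, does not exist as described; if it did, $Y\dasharrow Y'$ would be a nontrivial elementary transformation rather than an isomorphism. Since $Y\cong Y'$ already identifies $D^{\perp}$ with $(D')^{\perp}$ isometrically, the whole theorem reduces to the compatibility you leave as ``bookkeeping'': that the seed-dependent formula $\sum a_ie_i\mapsto\pi^*C-\sum a_iE_i$ of Theorem \ref{Xisuniversalfamily} intertwines $\mu_k^*$, given $a_i'=a_i$ for $i\neq k$ and $a_k'=-a_k+\sum_i[\epsilon_{ik}]_+a_i$. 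The paper verifies exactly this, by showing $\mu_k^*\big((\pi_1')^*C'-a_k'E_k'\big)=\pi_1^*C-a_kE_k$ on $Y_k$, which comes down to comparing intersection numbers with the proper transforms of the boundary divisors, where only the two divisors over $\pm w_k$ require a computation. Your proposal neither carries out this verification nor sets up a correct geometric frame in which it could be carried out.

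The cohomological fallback is also not sound as stated. The restriction map $D^{\perp}\to H^2(Y\setminus D)$ kills classes supported on the boundary, and $D^{\perp}$ can contain such classes: in the situation of Theorem \ref{nonfgtheorem}, where every component of $D$ has self-intersection $-2$, the class $[D]$ itself lies in $D^{\perp}$ and maps to zero on the interior. So no pairing defined intrinsically on the image in $H^2(Y\setminus D)$ can recover the intersection form on $D^{\perp}$ in general. Moreover, even where such an intrinsic pairing were available, mutation invariance would still require knowing that the composite $K\cong D^{\perp}\to H^2(Y\setminus D)$ is itself seed-independent, which is essentially the same check on the explicit formula of Theorem \ref{Xisuniversalfamily} that you are trying to avoid.
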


\begin{proof} 
It is enough to check the independence under a single mutation
$\mu_k$. So suppose given seeds $\s, \s'=\mu_k(\s)$. These two seeds
give rise to families $\cY,\cY'\rightarrow T_{K^*}$.
Fix $\phi\in T_{K^*}$ sufficiently general so that
the fibres $\cY_{\phi}$ and $\cY'_{\phi}$ are both blowups of toric
varieties $\oY$, $\oY'$ at \emph{distinct} points. These toric varieties
are related by an elementary transformation as follows. 
Let $\psi:N\rightarrow \oN=N/K$ be the quotient
map. If $\oSigma,\oSigma'$ are the fans in $\oN$ defining
$\oY$ and $\oY'$ respectively, then $\oSigma'$ is obtained from $\oSigma$
as follows. First, we can assume that both $\oSigma$ and $\oSigma'$
contain rays generated by $w_k$ and $-w_k$. Then we can assume
the remaining rays of $\oSigma'$ are obtained by applying the piecewise
linear transformation
\[
T_k:\bar n\mapsto \bar n+[\bar n \wedge w_k]_+ w_k.
\]
Note in particular that this is compatible with \eqref{etransform}. 

The map $\oN\rightarrow \oN/\RR w_k$ defines $\PP^1$-fibrations
$g:\oY\rightarrow\PP^1$, $g':\oY'\rightarrow\PP^1$. 

By Lemma \ref{elemtransformlemma2}, the seed mutation $\mu_k:T_N\dasharrow
T_N$ extends to a birational map $\mu_k:\cY\rightarrow \cY'$
which is an isomorphism off of sets of codimension $\ge 2$. In fact,
one checks easily from the details of the proof that this birational
map restricts to a biregular isomorphism between $Y$ and $Y'$. 
Specifically, this isomorphism is described as follows. Let $p_1,\ldots,p_n$
be the points of $\oY$ blown up to obtain $Y$, and $p_1',\ldots,p_n'$
the points of $\oY'$ blown up to obtain $Y'$. Then if $Y_k, Y_k'$ denote the
blowup of $\oY,\oY'$ at $p_k,p_k'$ respectively, we already have an
isomorphism $\mu_k:Y_k\rightarrow Y_k'$, and $p_i'=\mu_k(p_i)$ for
$i\not=k$. The isomorphism $\mu_k:Y\rightarrow Y'$ is then
obtained by further blowing up the $p_i, p'_i$ for $i\not=k$.
Furthermore, the composition $Y_k\mapright{\mu_k} Y_k'\rightarrow
\oY'$ contracts the proper transform of the curve $F_k=g^{-1}(g(p_k))$ to
$p_k'$. In particular, the curve class $F_k-E_k\in \Pic(Y)$ is mapped
to $E_k'$.

We now need to check that the composition of isomorphisms
$(D')^{\perp}\cong K\cong D^{\perp}$ given in Theorem 
\ref{Xisuniversalfamily} coincides with $\mu_k^*:(D')^{\perp}\rightarrow
D^{\perp}$. To do so, suppose given $\sum a_ie_i=\sum a_i'e_i'\in K$.
Then $a_i=a_i'$ for $i\not=k$ and $a_k'=-a_k+\sum_i [\epsilon_{ik}]_+a_i$
by \eqref{etransform}. These determine classes $C\in\Pic(\oY)$, $C'\in
\Pic(\oY')$ as in Theorem \ref{Xisuniversalfamily}. It is enough
to check that if $\pi_1:Y_k\rightarrow \oY$, $\pi_1':Y_k'\rightarrow\oY'$
are the blowups at $p_k$, $p_k'$ respectively, then 
$\mu_k^*((\pi'_1)^*(C')-a_k'E_k')=\pi_1^*(C)-a_kE_k$. Call these two
divisors $C_1'$ and $C_1$ respectively. Since the Picard
group of $Y_k$ is easily seen to be generated by the proper transforms
of the toric divisors of $Y_k$, and $\mu_k$ takes the boundary
divisor of $Y_k$ corresponding to $\rho\in\oSigma$ to the boundary divisor
of $Y_k'$ corresponding to $T_k(\rho)\in\oSigma'$, it is
enough to check that $C_1'$ and $C_1$ have the same intersection
numbers with the boundary divisors of $Y_k$. It is clear that
$C_1'$ and $C_1$ have the same intersection numbers with all boundary
divisors except possibly those corresponding to the rays $\pm\RR_{\ge 0}w_k$.
Call these two divisors $D_{k,\pm}\subseteq Y_k$. Then
\[
C_1\cdot D_{k,+}=\sum_{i: w_i=w_k, i\not=k} a_i=C_1'\cdot D_{k,+},
\]
while
\[
C_1\cdot D_{k,-}=\sum_{i: w_i=-w_k, i\not=k} a_i=C_1'\cdot D_{k,-}.
\]
This proves the result.
\end{proof}

\section{Examples of non-finitely generated upper cluster algebras} \label{nfgsec}

We will now use the material of the previous two sections to construct
examples of non-finitely generated upper cluster algebras with principal
coefficients and with general
coefficients. These examples are
a generalization of the example of Speyer \cite{Sp13}. They fail to be
finitely generated as a consequence of the following:

\begin{lemma} Let $A$ be a ring, $R$ an $M=\ZZ^n$-graded $A$-algebra,
$R=\bigoplus_{m\in M} R_m$. If $R_0=A$ and $R_m$ is not a finitely generated
$A$-module for some $m\in M$, then $R$ is not Noetherian.
\end{lemma}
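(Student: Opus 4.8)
The statement is essentially a graded Noetherianity criterion, and the plan is to argue by contradiction: assume $R$ is Noetherian and derive that every graded piece $R_m$ is a finitely generated $A$-module. First I would fix $m\in M$ and consider the $A$-submodule $R_{\ZZ m}:=\bigoplus_{k\in\ZZ} R_{km}\subseteq R$. Since this is a $\ZZ$-graded (indeed $\ZZ^n$-graded) $A$-subalgebra of $R$ with degree-$0$ part $R_0=A$, and since a subalgebra of a Noetherian ring need not be Noetherian, the plan is \emph{not} to pass to $R_{\ZZ m}$ directly but rather to work with ideals of $R$ itself. The key move is to produce an ideal of $R$ that cannot be finitely generated unless $R_m$ is a finitely generated $A$-module.

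The main step: consider the homogeneous ideal $J:=\bigoplus_{m'\neq 0} R_{m'}\cdot R = R_{\ge}$, more precisely the ideal generated by $\bigoplus_{m'\neq 0}R_{m'}$; since $R_0=A$, we have $R/J\cong A$. Now if $R$ is Noetherian then $J$ is finitely generated, say by homogeneous elements $x_1,\dots,x_r$ of degrees $m_1,\dots,m_r$ (all nonzero). Then for the fixed $m$, the graded piece $R_m$ is spanned over $A$ by the degree-$m$ components of $R$-linear combinations of the $x_j$: every element of $R_m$ is of the form $\sum_j a_j x_j$ with $a_j\in R_{m-m_j}$. Iterating (replacing each $a_j$ again by an $R$-combination of the $x_i$, noting the degrees $m-m_j$ strictly "decrease" in an appropriate partial order on the monoid generated by $m$ and the $m_j$), one shows $R_m$ is generated as an $A$-module by finitely many explicit products $x_{j_1}\cdots x_{j_s}$ with $m_{j_1}+\cdots+m_{j_s}=m$; there are only finitely many such products because each $m_{j_i}$ lies in a fixed finite set of nonzero lattice vectors and their sum is the fixed vector $m$, so $s$ is bounded and the number of tuples is finite. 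This contradicts the hypothesis that $R_m$ is not a finitely generated $A$-module.

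The step I expect to be the main obstacle is making the "degrees decrease" / termination argument precise when $M=\ZZ^n$ rather than $\ZZ^n_{\ge 0}$: there is no \emph{a priori} grading monoid in which the $m_j$ are "positive", so one cannot naively induct on a total degree. The clean fix is to observe that only finitely many of the $m_j$ can occur in any expression summing to $m$ with all partial sums lying in the subsemigroup $S$ of $M$ generated by $\{m_1,\dots,m_r\}$ — indeed write $m=\sum_j c_j m_j$ with $c_j\in\ZZ_{\ge 0}$; the set of such representations is finite by Gordan's lemma / finiteness of lattice points in a polytope, once one knows $m$ lies in $S$ (and if $m\notin S$ then $R_m=0$, contradicting the hypothesis, so that case doesn't arise). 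Thus the induction is really on this finite poset of sub-representations, and it terminates. Once this combinatorial bookkeeping is set up, the rest is the routine verification above that the finitely many products generate $R_m$ over $A$.
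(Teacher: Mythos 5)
There is a genuine gap, and it is exactly at the step you flagged as the main obstacle. Your argument hinges on the claim that, once the ideal $J$ generated by $\bigoplus_{m'\neq 0}R_{m'}$ has finitely many homogeneous generators $x_1,\dots,x_r$ of nonzero degrees $m_1,\dots,m_r$, the piece $R_m$ is the $A$-span of the finitely many products $x_{j_1}\cdots x_{j_s}$ with $m_{j_1}+\cdots+m_{j_s}=m$, the finiteness coming from ``Gordan's lemma / finiteness of lattice points in a polytope.'' For a $\ZZ^n$-grading this finiteness is false in general: the degrees $m_j$ need not lie in a pointed (strongly convex) cone, and if, say, $m_2=-m_1$, then $m=\sum_j c_jm_j$ with $c_j\in\ZZ_{\ge 0}$ has infinitely many solutions, the relevant set of lattice points is unbounded, and your induction on ``sub-representations'' has no reason to terminate. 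Gordan's lemma asserts finite generation of the semigroup of lattice points in a rational cone; it does not bound the number of factorizations. Related symptoms: the assertion $R/J\cong A$ is also false in general (already for $R=A[x,x^{-1}]$ with $\deg x=1$ one has $R_0=A$ but $J=R$, since $x\cdot x^{-1}=1\in J$), and the claim ``$m\notin S$ implies $R_m=0$'' is not justified without the very termination you are trying to establish. In short, the cancellation phenomena that a $\ZZ^n$-grading permits (as opposed to an $\NN^n$-grading) are precisely what this route cannot handle, so the implication ``$J$ finitely generated $\Rightarrow$ $R_m$ finitely generated over $A$'' is not proved.

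The paper's proof sidesteps all of this by working with the ideal $I$ generated by $R_m$ itself rather than the irrelevant ideal. If $R$ were Noetherian, $I$ would be finitely generated; writing each generator as an $R$-combination of elements of $R_m$, one may assume the generators $f_{ij}$ themselves lie in $R_m$. Now no positivity is needed: for any $r\in R$ the degree-$m$ component of $r\,f_{ij}$ is $(r)_0\,f_{ij}$, where $(r)_0\in R_0=A$ is the degree-zero component, because the $f_{ij}$ sit in degree exactly $m$. Hence $R_m=I\cap R_m$ is the $A$-module generated by the finitely many $f_{ij}$, contradicting the hypothesis. This is both shorter and immune to the lattice-geometry issues above; the hypothesis $R_0=A$ enters only through picking out the degree-zero coefficients. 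If you want to salvage your approach, you would have to add a hypothesis forcing the occurring degrees into a pointed cone, which the lemma does not assume and which the paper's intended applications (Cox rings graded by $M^{\circ}$) do not satisfy.
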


\begin{proof} 
Let $I$ be the homogeneous ideal of $R$ generated by $R_m$. We show $I$ is not
finitely generated as an ideal.
Suppose to the contrary that homogeneous $f_1,\ldots,f_p\in R$ generate $I$.
Necessarily $f_i=\sum r_{ij} f_{ij}$ for some $r_{ij}\in R$ and 
$f_{ij}\in R_m$, so we can assume $I$ is generated by a finite number of 
$f_{ij}\in R_m$. But $R_m=I\cap R_m$ is the $A=R_0$-module generated by
the $f_{ij}$, contradicting the assumption that $R_m$ is not finitely
generated as an $A$-module.
\end{proof}

In what follows, suppose given fixed data and seed data satisfying the
hypotheses of Proposition \ref{niceconditionprop}. This gives rise to the
family $\lambda:(\cY,\cD)\rightarrow T_{K^*}$ of Loojienga pairs,
obtained by blowing up a sequence of centers $Z_1,\ldots,Z_n\subseteq
\oY\times T_{K^*}$ in some order, for $\oY$ a toric surface.

\begin{theorem}
\label{nonfgtheorem}
Let $(Y,D)$ be the general fibre of
$(\cY,\cD)\rightarrow T_{K^*}$. Suppose that every 
irreducible component of $D$ has self-intersection $-2$. Then
\begin{enumerate}
\item $\Gamma(\shA_{\prin},\shO_{\shA_{\prin}})$ is non-Noetherian.
\item For $t\in T_M$ very general, $\shA_t$ the corresponding cluster
variety with general coefficients, then $\Gamma(\shA_t,\shO_{\shA_t})$ is
non-Noetherian.
\end{enumerate}
\end{theorem}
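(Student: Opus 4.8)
The plan is to combine Corollary \ref{coxcor} with the non-finite-generation lemma just stated, using the geometry of the Looijenga pair $(Y,D)$ with all boundary components of self-intersection $-2$. By Corollary \ref{coxcor} (1), $\Gamma(\shA_{\prin},\shO_{\shA_{\prin}})$ is isomorphic to $\bigoplus_{m\in M^{\circ}}\Gamma(X,\shL_m)$, and for very general $t$, $\Gamma(\shA_t,\shO_{\shA_t})$ is the Cox ring of $X_{i(t)}$; moreover by Theorem \ref{Xisuniversalfamily} the fibre $X_\phi=\cY_\phi\setminus\cD_\phi$ is obtained from the Looijenga pair $(Y,D)$ by deleting the boundary. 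So the task reduces to showing that the Cox ring of the surface $Y\setminus D$ (equivalently $\bigoplus_{H\in\Pic Y}\Gamma(Y\setminus D,\shO_Y(H))$) is non-Noetherian, and then applying the lemma with $A=R_0$ the degree-zero part.

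First I would verify the $\ZZ^n$-grading hypothesis of the lemma. The Cox ring is naturally $\Pic(Y)$-graded; since there are no frozen variables and the Picard group is finitely generated (and, after passing to a suitable situation, free — one can handle torsion as in Construction \ref{universaltorsordef}), this is a $\ZZ^n$-grading. The degree-zero piece is $\Gamma(Y\setminus D,\shO_{Y\setminus D})$, which by the log Calabi-Yau structure (the fibres have maximal boundary) one expects to be just $\kk$ when the boundary is negative-definite; more precisely, when every component of $D$ has self-intersection $-2$ and $D$ is a cycle, the intersection matrix is negative semi-definite with one-dimensional kernel, and $Y\setminus D$ is affine with $\Gamma=\kk$ — this is exactly the negative semidefinite/affine case discussed in the introduction around Theorem \ref{symmetricform}. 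So $R_0=\kk=A$.

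Next, the heart of the argument: I must exhibit a line bundle class $H\in\Pic(Y)$ for which $\Gamma(Y\setminus D,\shO_Y(H))=\Gamma(Y,\shO_Y(H+nD))$ (union over $n\ge0$) is not a finite-dimensional $\kk$-vector space — i.e., is not a finitely generated $A$-module. The point is that because every component $D_j'$ has $D_j'^2=-2$, for a suitable class $H$ the linear systems $|H+nD|$ on $Y$ grow without bound. Concretely, one takes $H$ to be a class such that $H\cdot D_j'$ is suitably positive for some components, so that $h^0(Y,\shO_Y(H+nD))$ is unbounded in $n$; this is governed by Riemann–Roch on $Y$ together with the observation that $(H+nD)^2 = H^2 + 2n(H\cdot D) + n^2 D^2$ and $D^2=\sum_{i,j}D_i'\cdot D_j'$, and since the $-2$ condition forces $D^2=0$ (the cycle relation), the quadratic term vanishes and the leading growth is linear in $n$ provided $H\cdot D>0$. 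Since $R_0=\kk$, unboundedness of $\dim_\kk R_H$ is exactly the statement that $R_H$ is not a finitely generated $R_0$-module.

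Finally I would assemble the pieces: choosing such an $H$, the non-finite-generation lemma gives that $\Cox(Y\setminus D)$ is non-Noetherian, hence by Corollary \ref{coxcor} so are $\Gamma(\shA_{\prin},\shO_{\shA_{\prin}})$ and $\Gamma(\shA_t,\shO_{\shA_t})$ for very general $t$, proving (1) and (2). The main obstacle I anticipate is the choice of the class $H$ and the clean proof that $h^0(Y,\shO_Y(H+nD))\to\infty$: one needs to rule out base-locus/negativity pathologies and make sure the $-2$ hypothesis is used correctly (it is what kills the $n^2$ term and also what forces $\Gamma(Y\setminus D,\shO)=\kk$). A secondary technical point is the reduction to the case $\Pic(Y)$ torsion free, but this is handled exactly as in the shifting-family discussion of Construction \ref{universaltorsordef} and does not affect Noetherianity.
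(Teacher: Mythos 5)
Your route works for part (2), but as written it does not prove part (1), and one intermediate justification is wrong. For $\shA_{\prin}$ the graded ring supplied by Corollary \ref{coxcor} is $\bigoplus_{m\in M^{\circ}}\Gamma(X,\shL_m)$ where $X$ is the \emph{total space} of the family over $T_{K^*}$, not a single fibre; its degree-zero part is $\Gamma(X,\shO_X)=\kk[K]$, not $\kk$. So to apply the lemma you must show that some $\Gamma(X,\shL_m)$ fails to be a finitely generated $\kk[K]$-module, and this does not follow formally from non-Noetherianity (or infinite-dimensionality of a graded piece) of the Cox ring of one very general fibre: the family $\cY\setminus\cD\rightarrow T_{K^*}$ is not proper, so sections over the total space and over a fibre are not related by a simple base-change argument. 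The paper's proof supplies exactly this missing relative step: it takes $\shL=\shO_{\cY}(E)$ with $E$ the exceptional divisor over the last blown-up center (a $\PP^1$-bundle over $T_{K^*}$), proves fibrewise $H^1$-vanishing, and uses cohomology and base change on the proper family $\cY\rightarrow T_{K^*}$ to show each inclusion $\Gamma(\cY,\shO_{\cY}(E+(n-1)\cD))\subseteq\Gamma(\cY,\shO_{\cY}(E+n\cD))$ has cokernel $H^0(\cD,\shO_{\cD}(E+n\cD))\cong\kk[K]\neq 0$, so the chain never stabilizes. Some argument of this relative kind is indispensable for (1).

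Second, your computation of the degree-zero piece is justified incorrectly. With every component of $D$ of square $-2$, the intersection matrix of $D$ is only negative \emph{semi}-definite ($D^2=0$), and $Y\setminus D$ is not affine (the claim ``affine with $\Gamma=\kk$'' is self-contradictory anyway). The correct reason that $\Gamma(Y\setminus D,\shO)=\kk$ — and the reason the very general fibre is needed — is that $D\in D^{\perp}$ and, by Theorem \ref{Xisuniversalfamily}, the normal bundle class $\phi(D)\in\Pic^0(D)$ is non-torsion for very general $\phi$, whence $H^0(D,\shO_D(nD))=0$ and inductively $H^0(Y,\shO_Y(nD))=\kk$ for all $n$; over special $\phi$ (e.g.\ trivial period point) $D$ moves in an elliptic pencil and non-constant functions do exist. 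Granting these repairs, your fibrewise growth argument for (2) — Riemann--Roch with $D^2=0$, a class $H$ with $H\cdot D>0$, and vanishing of $h^2(Y,H+nD)=h^0(Y,-(H+(n+1)D))$ — is a perfectly good alternative to the paper's exact-sequence computation with $E$, since $h^0\geq\chi$ then grows linearly in $n$; but note the paper's explicit choice $H=E$ and its $H^1$-vanishing are what make the relative argument for (1) go through.
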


\begin{proof}
Let $X$ be as usual the subset of
$\cX$ obtained by gluing together the initial seed torus $\shX_{\s}$ and
adjacent seed tori $\shX_{\mu_k(\s)}$. By Corollary \ref{coxcor},
$\Gamma(\shA_{\prin},\shO_{\shA_{\prin}})=\bigoplus_{m\in M^{\circ}}
\Gamma(X,\shL_m)$, and this gives a $M^{\circ}$-grading on this
algebra. In addition, by Lemma \ref{maintheoremrank2}, $X$ and 
$\cY\setminus\cD$ agree off of a codimension $\ge 2$ set, and both $X$ and
$\cY\setminus\cD$ are non-singular. So
$\Pic X\cong \Pic(\cY\setminus \cD)$.
Thus for each $m\in M^{\circ}$, $\shL_m$ can be viewed as a line bundle
on $\cY\setminus\cD$, and $\shL_m$ has the same space of sections regardless
of whether $\shL_m$ is viewed as a bundle on $X$ or on $\cY\setminus\cD$.
So, by the lemma, it will suffice to show that $\Gamma(\cY\setminus\cD,
\shO_{\cY})=A:=\kk[K]$ and find some line bundle $\shL$ on
$\cY\setminus\cD$ such that $\Gamma(\cY\setminus\cD,\shL)$ is not a
finitely generated $A$-module.

To show $\Gamma(\cY\setminus\cD,\shO_{\cY})=A$, it is sufficient
to show that a regular function on $\cY\setminus\cD$ must be constant
on the very general fibre of $\lambda:\cY\setminus\cD\rightarrow T_{K^*}=
\Spec\kk[K]$.
So consider the fibre $(Y,D)$ of $(\cY,\cD)\rightarrow
T_{K^*}$ over $\phi\in T_{K^*}$. 
The space of regular functions on $Y\setminus D$ can be identified
with $\lim_{\rightarrow} H^0(Y,\shO_Y(nD))$. Consider the long exact
cohomology sequence associated to
\[
0\rightarrow \shO_Y(nD)\rightarrow \shO_Y((n+1)D)\rightarrow 
\shO_Y((n+1)D)|_{D}\rightarrow 0.
\]
Note that $D\in D^{\perp}$ since all components of $D$ have square $-2$
and with $\phi\in T_{K^*}=\Hom(D^{\perp},\Pic^0(D))$, $\phi(D)$ is the normal
bundle of $D$ in $Y$ by Theorem \ref{Xisuniversalfamily}. Thus as
$\phi$ is very general, $\phi(D)$ is not torsion.
So $H^0(D,\shO_D(nD))=0$ for all $n>0$, 
and we see that $H^0(Y,\shO_Y(nD))=\kk$
for all $n\ge 0$. Thus the only regular functions on $Y\setminus D$
are constant.

Now let $E$ be the exceptional divisor over the last center $Z_i$
blown up in constructing $\cY$, so that $E$ is a $\PP^1$-bundle over
$T_{K^*}$. Then we claim $\Gamma(\cY\setminus\cD,\shO_{\cY}(E))$
is not a finitely generated $A$-module. Note that 
\[
\Gamma(\cY\setminus\cD,\shO_{\cY}(E))=\lim_{\longrightarrow\atop
n\ge 0} \Gamma(\cY,\shO_{\cY}(E+n\cD)).
\]
Since each of these groups is an $A$-module, it is sufficient to show 
that the increasing chain of $A$-modules
\begin{equation}
\label{sectionchain}
\Gamma(\cY,\shO_{\cY}(E))\subseteq
\Gamma(\cY,\shO_{\cY}(E+\cD))\subseteq
\Gamma(\cY,\shO_{\cY}(E+2\cD))\subseteq\cdots
\end{equation}
does not stabilize. We have a long exact sequence
\begin{align*}
0&\mapright{} H^0(\cY,\shO_{\cY}(E+(n-1)\cD))
\mapright{i}H^0(\cY,\shO_{\cY}(E+n\cD))
\mapright{} H^0(\cD,\shO_{\cD}(E+n\cD))\\
&\mapright{} H^1(\cY,\shO_{\cY}(E+(n-1)\cD))
\mapright{} H^1(\cY,\shO_{\cY}(E+n\cD))
\mapright{} H^1(\cD,\shO_{\cD}(E+n\cD)).
\end{align*}
If $(Y,D)$ is any fibre of $(\cY,\cD)\rightarrow T_{K^*}$,
one checks easily that $H^1(Y,\shO_Y(E\cap Y))=0$ 
(as $E\cap Y$ is an irreducible $-1$-curve) and that $H^1(D,\shO_D((E\cap D)
+nD))=0$ (using that $E\cap D$ consists of one point). It then
follows from cohomology and base change along with the fact that
$T_{K^*}$ is affine that $H^1(\cD,\shO_{\cD}(E+n\cD))=0$ for all
$n\ge 0$ and $H^1(\cY,\shO_{\cY}(E))=0$. Inductively from the above
long exact sequence one sees $H^1(\cY,\shO_{\cY}(E+n\cD))=0$ for all $n\ge 0$.
Thus the cokernel of the inclusion $i$ in the above long exact sequence is
$H^0(\cD,\shO_{\cD}(E+n\cD))$. Now $\lambda_*\shO_{\cD}(E+n\cD)$ is
a line bundle on $T_{K^*}$, again by cohomology and base change, and
since $T_{K^*}$ is an algebraic torus, this line bundle must be trivial.
Thus $H^0(\cD,\shO_{\cD}(E+n\cD))=A$, and we see the chain \eqref{sectionchain}
never stabilizes. 

The argument for $\shA_t$ for $t$ very general is identical but easier,
as we have already done the relevant cohomology calculations on $(Y,D)$
a very general fibre of $(\cY,\cD)\rightarrow T_{K^*}$.
Then one makes use of Corollary \ref{coxcor}, (3). 
\end{proof}

\begin{example}
Using Construction \ref{toricblowupconstruction} it is easy to produce
many examples satisfying the hypotheses of the above theorem.
For example, let $\oSigma$ be the fan for $\PP^2$, with rays generated
by $w_1=w_2=w_3=(1,0)$, $w_4=w_5=w_6=(0,1)$ and $w_7=w_8=w_9=(-1,-1)$. 
Take all $\nu_i=1$.
Thus a general $(Y,D)$ involves blowing three points on each of the 
coordinate lines of $\PP^2$, so $D$ is a cycle of three $-2$ curves.

This is very closely related to the example of Speyer \cite{Sp13}, 
which in the terminology of Construction \ref{toricblowupconstruction}
again involves taking the fan for $\PP^2$, $w_1=(1,0)$, $w_2=(0,1)$ and
$w_3=(-1,-1)$, but taking 
all $\nu_i=3$. The surface $(Y,D)$ will be constructed 
by performing a weighted blowup of one point on each of three coordinate lines
on $\PP^2$. Then $D$ is still a cycle of three $-2$ curves, but the
situation requires some additional analysis because $Y$ is in fact singular
(having three $A_2$ singularities).
\end{example}

\begin{remark}
In fact there is a much broader range of counterexamples: suppose that
the blowup $(\cY,\cD)\rightarrow \oY\times T_{K^*}$ factors through 
$(\cY,\cD)\rightarrow (\cY',\cD')$, such that a very general fibre $(Y',D')$
of $(\cY',\cD')\rightarrow T_{K^*}$ has the property that every irreducible
component of $D'$ has self-intersection $-2$. Then the argument above
shows that the Cox ring of $Y'\setminus D'$ is non-Noetherian, and
$Y'\setminus D'$ is an open subset of $Y\setminus D$. If $U\subseteq V$,
then the Cox ring of $V$ surjects onto the Cox ring of $U$, so
the fact that the Cox ring of $Y'\setminus D'$ is non-Noetherian
implies the Cox ring of $Y\setminus D$ is non-Noetherian. A similar
but slightly more delicate argument also applies to the principal coefficient
case.

In fact, we expect that whenever the intersection matrix of $D$ is negative
definite, the Noetherian condition fails.
\end{remark}

\section{Counterexamples to the Fock-Goncharov dual bases conjecture}

\label{FGcounterexamplessection}

\cite{FG09} gave an explicit conjecture about the existence of $\kk$-bases for
the $\shX$ and $\shA$ cluster algebras. We will state it loosely here, 
under the assumption that all $d_i=1$, so that $M= M^\circ$, $N= N^\circ$. This
merely allows us to avoid discussing Langlands dual seeds.

Fock and Goncharov conjecture
\begin{conjecture}[\cite{FG09}, 4.1] $N$ parameterizes a canonical basis
of $H^0(\cX,\cO_{\cX})$, and $M$ parameterizes a canonical basis of sections
of $H^0(\cA,\cO_{\cA})$.
\end{conjecture}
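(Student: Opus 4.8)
This conjecture, as literally stated, is false, and the plan is to refute its $\shX$-half by producing explicit cluster varieties whose ring of global functions is far too small to carry a basis indexed by $N$; the $\shA$-half is not at issue, since by the Laurent phenomenon (Corollary \ref{lpcor}) $\shA$ always has an abundance of global functions. I would work in the rank-two setting of \S\ref{rk2sec}, where by Lemma \ref{maintheoremrank2} the family $\shX^{\ft}\to T_{K^*}$ is, away from codimension two, the family $\cY\setminus\cD\to T_{K^*}$ of interiors of Looijenga pairs, and where the mutation-invariant form of Theorem \ref{symmetricform} governs the geometry of the fibres.

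First I would fix the combinatorial data. Using Construction \ref{toricblowupconstruction} I would choose the $w_i$ so that the very general fibre $(Y,D)$ of $\lambda:\cY\to T_{K^*}$ has every irreducible component of $D$ of self-intersection $-2$ — e.g.\ the $\PP^2$ example of \S\ref{nfgsec}, blowing up three points on each coordinate line, so that $D$ is a cycle of three $(-2)$-curves. By Theorem \ref{Xisuniversalfamily} the form of Theorem \ref{symmetricform} is then the intersection form on $D^\perp$, which here is negative semidefinite and degenerate, hence in particular \emph{not} negative definite — precisely the regime in which $Y\setminus D$ fails to be affine.

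Next I would show that for $\phi\in T_{K^*}$ very general, $\Gamma(Y\setminus D,\shO)=\kk$. Writing $\Gamma(Y\setminus D,\shO)=\varinjlim_n H^0(Y,\shO_Y(nD))$ and using the long exact sequence of $0\to\shO_Y(nD)\to\shO_Y((n{+}1)D)\to\shO_Y((n{+}1)D)|_D\to 0$, the hypothesis that the components of $D$ are $(-2)$-curves gives $D\in D^\perp$, so by Theorem \ref{Xisuniversalfamily} the class of $\shO_D(nD)$ in $\Pic^0(D)$ is $\phi(nD)$, which for very general $\phi$ is non-torsion; hence $H^0(D,\shO_D(nD))=0$ for $n>0$ and inductively $H^0(Y,\shO_Y(nD))=\kk$. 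This is the computation already carried out inside the proof of Theorem \ref{nonfgtheorem}. I would then globalize: by Lemma \ref{maintheoremrank2} and Theorem \ref{maingeometrictheorem}, $\shX^{\ft}_\phi$ agrees with $Y\setminus D$ off codimension two for very general $\phi$, so $\Gamma(\shX^{\ft}_\phi,\shO)=\kk$; since $T_{K^*}$ is affine and $\lambda:\shX\to T_{K^*}$ is everywhere regular (Remark \ref{extrastructuresremark}), a global function on $\shX$ is constant on a Zariski-dense family of fibres, hence equals $\lambda^*g$ for a unique $g\in\kk[K]$, so that $H^0(\shX,\shO_\shX)=\lambda^*\kk[K]\cong\kk[K]$. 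Finally I would read off the contradiction: restriction to the seed torus $\shX_\s=\Spec\kk[N]$ identifies $H^0(\shX,\shO_\shX)$ with the Laurent polynomials supported on the \emph{proper} sublattice $K\subsetneq N$, whereas a basis "parameterized by $N$" in the sense of \cite{FG09} must contain, for every $n\in N$, an element whose leading exponent on $\shX_\s$ is $n$ — impossible for $n\in N\setminus K$.

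The main obstacle lies in the version of the middle step that would be needed for a counterexample with $D$ not built from $(-2)$-curves: showing that the very general Looijenga-pair interior has only constant regular functions whenever the form is indefinite genuinely requires the analytic input of \cite{GHK12} (the relevant contraction of $D$ produces a compact, possibly non-algebraic, surface, so the vanishing is not a formal cohomology computation), and one must also verify — as in Theorem \ref{maingeometrictheorem}(3) and Theorem \ref{nonfgtheorem} — that the "very general" locus, a countable intersection of dense opens in $T_{K^*}$, is nonempty and does the job. The remaining bookkeeping (non-separatedness and possible non-Noetherianness of $\shX$, and passing among $\shX$, $\shX^{\ft}$ and $\cY\setminus\cD$) is routine given the $S_2$/codimension-two arguments already in place.
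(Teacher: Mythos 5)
Your refutation is correct and follows essentially the paper's own argument in \S\ref{FGcounterexamplessection}: rank-two data from Construction \ref{toricblowupconstruction}, only constant functions on the very general fibre of $\lambda$ (you invoke the $(-2)$-cycle cohomology computation of Theorem \ref{nonfgtheorem}, which the paper cites as its alternative to choosing $D$ analytically contractible), hence $H^0(\shX,\shO_{\shX})=\lambda^*\kk[K]$ and no functions left for $N\setminus K$ to parameterize, under the same mild compatibility-with-$\lambda$ assumption the paper makes. The only caveat is your aside that the $\shA$-half is ``not at issue'' because of the Laurent phenomenon: an abundance of global functions does not address whether $M$ indexes a basis, and the paper in fact argues that half also fails here (line bundles on $X_{i(t)}$ with non-isomorphic spaces of sections) --- though refuting the $\shX$-half alone does suffice to disprove the stated conjunction.
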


In fact, the conjecture as stated in \cite{FG09} is much stronger, giving
an explicit conjectural description of the bases as the set of positive 
universal Laurent polynomials which are extremal, i.e., not a non-trivial
sum of two other positive universal Laurent polynomials. This strongest
form of the conjecture has now been disproven in \cite{LLZ13}, in which
examples are given where the set of all extremal positive universal Laurent
polynomials are not linearly independent. Here we give a much more basic
counterexample to a much weaker form of the conjecture.

We shall again restrict to the case that that there are no frozen
variables. We will merely assume the conjectured basis is compatible with the
$T_K$ action on $\shA$ given by Remark 
\ref{seedtoriremarks}, (3),
and the map $\lambda:\shX\rightarrow T_{K^*}$ in
the natural way. 
We assume that the canonical basis element of $\Gamma(\shX,\shO_{\shX})$
corresponding to $n\in K$ is $\lambda^*(z^n)$. Furthermore, for $\pi:
M\rightarrow K^*$ the natural projection dual to the inclusion $K\rightarrow N$,
we assume that the set $\pi^{-1}(m)$ parameterizes a basis of
the subspace of $H^0(\shA,\shO_{\shA})$ of weight $m$ 
eigenvectors for the $T_K$ action.

We indicate now why a basis with these properties cannot exist in general. 
We consider the rank $2$ cluster algebras produced by Construction 
\ref{toricblowupconstruction}, following
the notation of the construction, taking all $\nu_i=1$. The
general fibre of $\lambda:\cX \to T_{K^*}$ is isomorphic up to codimension
two subsets to the general fibre of $\lambda:\cY\setminus\cD
\rightarrow T_{K^*}$. A fibre of the latter map is of the form
$U:=Y\setminus D$, where $(Y,D)$ is a Looijenga pair
with a map $Y \to \oY$  obtained by blowing up points
on the toric boundary. Since the initial data of the $w_i$ in Construction
\ref{toricblowupconstruction} can be chosen arbitrarily, and in 
particular the $w_i$'s may be repeated as many times as we like,
we can easily find examples for which $D \subseteq Y$ is
analytically contractible, i.e., there is an analytic map 
$(Y,D) \to (Y',p)$ with exceptional locus
$D$ and $p\in Y'$ a single point. Further,
$U = Y \setminus D = Y' \setminus \{p\}$,
and so $H^0(U,\cO_U) = H^0(Y',\cO_{Y'}) = \kk$. 
Even if $D$ is not contractible but rather
a cycle of $-2$ curves as in Theorem \ref{nonfgtheorem}, 
the very general fibre
$Y\setminus D$ will only have constant functions.
It follows that 
\[
H^0(\cX,\cO_{\cX}) = H^0(T_{K^*},\cO_{T_{K^*}}) = \kk[K].
\]
Thus there are no functions for points of $N \setminus K$ to parameterize. 

Consider the conjecture in the opposite direction. Assume for simplicity that,
as in Corollary \ref{aprinut}, $\Pic(X) = \Pic(X_t) = M/p^*(N)$ is
torsion free. Then
$(M/p^*(N))^* = K = \Ker(p^*: N \to M)$. The Fock-Goncharov conjecture
for $\cA_{\prin}$ implies the analogous result for very general $\cA_t$, i.e. the
existence of a canonical
basis of the upper cluster algebra with very general coefficients $H^0(\cA_t,\cO)$,
parameterized by $\cX^{\trop}(\bZ)$. We have 
\[
H^0(\cA_t,\cO_{\cA_t}) = \Cox(X_{i(t)}) = \bigoplus_{m \in K^*= \Pic(X_{i(t)})} H^0(X_{i(t)},\shL_m)
\]
by Corollary \ref{coxcor}. Here $\shL_m$ is a line bundle representing the
isomorphism class given by $m$.
Assuming the canonical bases are compatible with the
natural torus actions, then for $m\in K^*$,  $\pi^{-1}(m) \subset \cX^{\trop}(\bZ)$ restricts to a basis for
the weight $m$-eigenspace $H^0(X_{i(t)},\shL_m)$ of $H^0(\cA_t,\cO)$ under the $T_K$ action, for
$\pi: \cX^{\trop}(\bZ) \to K^* = M/p^*(N)$, the natural map induced by the fibration $\cX \to T_{K^*}$. 
But any choice of seed identifies $\cX^{\trop}(\bZ)$ with $M$ and 
each fibre of $\pi$ with a $p^*(N)=N/K$ torsor. 
Thus the conjecture implies all
line bundles on $X_t$ have isomorphic spaces of sections, 
with basis parameterized (after choice
of seed) by an $N/K$ affine space. This is a 
very strong condition --- most varieties have line bundles with no non-trivial
sections, and rather than an affine space one would expect (for example by comparison with the toric case) 
sections parameterized by integer points of a polytope. Explicitly, the example
of Theorem \ref{nonfgtheorem} clearly has line bundles with non-isomorphic
spaces of sections.

This reasoning suggests to us the conjecture can only hold if $\cX$ is affine {\it up to flops}:

\begin{conjecture} \label{cfgconj} If the Fock-Goncharov conjecture holds 
then $H^0(\cX,\cO_{\cX})$ is finitely
generated, and the canonical map
$\cX \to \Spec(H^0(\cX,\cO_{\cX}))$ is an isomorphism, outside of codimension two. 
\end{conjecture}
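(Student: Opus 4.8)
Since this is stated as a conjecture, what follows is a proposed strategy together with the point at which I expect the real obstacle to sit.

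Assume the Fock--Goncharov conjecture in the torus-compatible form set up above: $H^0(\shX,\shO_{\shX})$ has a canonical $\kk$-basis $\{\vartheta_n\mid n\in N=\shA^{\trop}(\ZZ)\}$ with non-negative integer structure constants, each $\vartheta_n$ homogeneous of weight $\bar n\in N/K$ for a torus grading compatible with $\lambda\colon\shX\to T_{K^*}$, $\vartheta_0=1$, and $\vartheta_k=\lambda^*(z^k)$ for $k\in K$. First I would pin down the $\kk[K]$-module structure: since $\lambda^*(z^{-k})$ is an inverse for $\vartheta_k=\lambda^*(z^k)$ and all structure constants are non-negative integers, the product $\vartheta_k\cdot\vartheta_n$ must be a \emph{single} basis vector with coefficient $1$; after normalising the indexing this reads $\vartheta_k\cdot\vartheta_n=\vartheta_{n+k}$. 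Hence $H^0(\shX,\shO_{\shX})=\bigoplus_{\bar n\in N/K}\kk[K]\,\vartheta_{\bar n}$ is a free $\kk[K]$-module on a system of coset representatives, so finite generation over $\kk$ is equivalent to finite generation over $\kk[K]$, and both the algebra and the canonical map to $\Spec$ of it are controlled by the ``reduced'' multiplication of the $\vartheta_{\bar n}$, i.e.\ by the algebra $\bigoplus_{\bar n}H^0(X_\phi,\shO)$ on a general fibre $X_\phi$ of $\lambda$, which by Lemma~\ref{maintheoremrank2} agrees up to codimension two with $\shX^{\ft}_\phi$.

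Next I would bring in the blow-up description of Theorem~\ref{maingeometrictheorem} (together with Construction~\ref{toricblowupconstruction} and Lemma~\ref{maintheoremrank2} in the $\rank\epsilon=2$ case): a general fibre $X_\phi$ is the interior $Y\setminus D$ of a pair obtained by blowing up points on the toric boundary of a toric variety, and the whole family $\shX\to T_{K^*}$ is, up to codimension two, the blow-up of sections in a toric fibration. If $D$ supports a big and semiample divisor, then $Y\setminus D$ is affine with finitely generated function ring; relative $\Spec$ over the torus $T_{K^*}$ promotes this to finite generation of $H^0(\shX,\shO_{\shX})$, and since $X_\phi\to\Spec H^0(X_\phi,\shO)$ is an isomorphism and the $\vartheta_n$ restrict to a point-separating family on $X_\phi$, the canonical map $\shX\to\Spec H^0(\shX,\shO_{\shX})$ is injective over general $\phi$; both sides being $S_2$ and agreeing off codimension two by Theorem~\ref{maingeometrictheorem}, they agree in codimension two (the word ``flop'' absorbing the non-separatedness of $\shX$ recorded in Remark~\ref{Noetherianremark}). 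If instead $D$ supports no such divisor --- the setting of Theorem~\ref{nonfgtheorem} and Section~\ref{nfgsec} --- then for very general $\phi$ one has $H^0(X_\phi,\shO)=\kk$, so $H^0(\shX^{\ft},\shO_{\shX})=\kk[K]$ and $\lambda^*$ is an isomorphism $\kk[K]\cong H^0(\shX,\shO_{\shX})$; but then $\{\vartheta_n\mid n\in N\}$ would be a $\kk$-basis of $\kk[K]$ properly containing the character basis $\{\lambda^*z^k\mid k\in K\}$ of the \emph{same} space, which is impossible unless $N=K$, i.e.\ unless $\shX$ is an algebraic torus, where the conclusion is immediate.

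The hard part is turning this dichotomy into a theorem rather than a census of examples one can compute. A lattice-indexed basis with non-negative integer structure constants comes nowhere near forcing finite generation --- the non-finitely-generated upper cluster algebras of Section~\ref{nfgsec} carry exactly such bases --- so the implication cannot be purely formal, and the second branch above used, crucially, that the $\vartheta_n$ span \emph{all} of $H^0(\shX,\shO_{\shX})$, something that already fails for the non-affine $\rank\epsilon=2$ examples. I expect the argument to require the scattering-diagram and theta-function machinery of \cite{GHKK}: one would want to show that precisely when the theta functions form a vector-space basis of $H^0(\shX,\shO_{\shX})$, their graded pieces assemble into the section ring of a big and semiample divisor on a partial compactification of $\shX$, hence a finitely generated ring, exhibiting $\shX$ as that compactification minus a boundary --- ``affine up to flops''. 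Identifying for which seeds the theta functions actually give a basis, and removing the $\rank\epsilon=2$ restriction in the ``few functions'' branch, is where the work lies.
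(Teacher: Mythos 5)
This statement is a conjecture in the paper and is not proved there: the only support given is the heuristic discussion in \S\ref{FGcounterexamplessection} together with the remark following Conjecture \ref{cfgconj}, which in the $\rank\epsilon=2$ setting of Theorem \ref{symmetricform} identifies its conclusion with affineness of the generic fibre of $\cX\to T_{K^*}$, equivalently negative definiteness of the canonical form on $K$. Your sketch reproduces essentially that same reasoning — torus-compatibility forces the basis elements indexed by $K$ to be the characters $\lambda^*(z^k)$, and when the very general fibre $Y\setminus D$ has only constant functions there is nothing left for $N\setminus K$ to parameterize — with the minor caveat that your ``big and semiample'' dichotomy glosses the negative semi-definite case (the cycle of $-2$-curves of Theorem \ref{nonfgtheorem}), which the paper handles via non-torsion of the normal bundle $\phi(D)$ rather than indefiniteness. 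Since you state openly that turning this into a theorem (beyond rank two, and presumably via the theta-function machinery of \cite{GHKK}) is where the real work lies, your assessment coincides with the paper's own status for the statement, and there is no proof in the paper against which a gap could be charged.
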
 

The results of \S \ref{rk2sec} imply that when the hypotheses of Theorem
\ref{symmetricform} are satisfied, the conditions in 
Conjecture \ref{cfgconj} hold iff the generic fibre of $\cX \to T_{K^*}$ is
affine, which is true iff
the canonical symmetric form on $K$ given by Theorem \ref{symmetricform}
is negative definite. Indeed, the generic fibre of $\cX\to T_{K^*}$ is
isomorphic, up to codimension two, to a surface $Y\setminus D$ as
in Theorem \ref{Xisuniversalfamily}. But if $Y\setminus D$ is affine, then
$D$ supports an ample divisor, and by the Hodge index theorem, $D^{\perp}$
is negative definite. Conversely, if $D^{\perp}$ is negative definite,
there must be some integers $a_i$ such that $(\sum a_i D_i)^2>0$.
The result then follows from \cite{GHK11}, Lemma 6.8 and the fact
that $(Y,D)$ is chosen generally in the family.


\begin{thebibliography}{99}
\bibitem[Ar08]{Ar08} I.~Arzhantsev, \emph{On factoriality of Cox rings},
preprint, 2008, arXiv:0802.0763.

\bibitem[BCHM]{BCHM} C. ~Birkar, P. ~Cascini, C. Hacon, and J. McKernan, 
\emph{Existence of minimal models for varieties of log general type},  
J. Amer. Math. Soc. 23 (2010), no. 2, 405–-468. 

\bibitem[BH03]{BH03} F.~Berchtold, J.~Hausen, \emph{Homogeneous coordinates 
for algebraic varieties,}  J.\ Algebra {\bf 266} (2003), 636--670. 

\bibitem[BFZ05]{BFZ05} A.~Berenstein, S.~Fomin and A.~Zelevinsky,
\emph{Cluster algebras. III. Upper bounds and double Bruhat cells,}
Duke Math.\ J.\  {\bf 126}  (2005), 1--52. 

\bibitem[D52]{D52} E.~Dynkin, \emph{Semisimple subalgebras of semisimple Lie
algebras,} Mat.\ Sb.\ Nov.\ Ser.\ {\bf 3}, (1952) 349--462.

\bibitem[FG06]{FG06} V.~Fock and A.~Goncharov, \emph{Moduli spaces
of local systems and higher Teichm\"uller theory,} Publ.\ Math.\ Inst.\
Hautes \'Etudes Sci., {\bf 103} (2006) 1--211.

\bibitem[FG09]{FG09} V.~Fock and A.~Goncharov, \emph{Cluster 
ensembles, quantization
and the dilogarithm}, Ann. Sci.\'Ec. Norm. Sup\'er. (4) 42 (2009), no. 6, 
865--930.

\bibitem[FG11]{FG11} V.~Fock and A.~Goncharov, \emph{Cluster $X$-varieties
at infinity}, preprint, 2011.

\bibitem[FZ02a]{FZ02a} S.~Fomin, A.~Zelevinsky, \emph{Cluster algebras. I. Foundations,}
J.\ Amer.\ Math.\ Soc., {\bf 15} (2002) 497--529.

\bibitem[FZ02b]{FZ02b} S.~Fomin, A.~Zelevinsky, \emph{The Laurent phenomenon},
Adv.\ in Appl.\ Math., {\bf 28} (2002), 119--144.

\bibitem[FZ07]{FZ07} S.~Fomin and A.~Zelevinsky, \emph{Cluster algebras. IV. 
Coefficients,} Compos.\ Math.\  {\bf 143}  (2007),  112--164. 

\bibitem[GHK11]{GHK11} M.~Gross, P.~Hacking and S.~Keel, \emph{Mirror
symmetry for log Calabi-Yau surfaces I}, preprint, 2011.

\bibitem[GHK12]{GHK12} M.~Gross, P.~Hacking and S.~Keel,
\emph{Moduli of surfaces with an anti-canonical cycle}, preprint, 2012,
arXiv:1211.6367.

\bibitem[GHKK]{GHKK} M.~Gross, P.~Hacking, S.~Keel, and M.~Kontsevich,
\emph{Mirror symmetry and cluster varieties}, in preparation.

\bibitem[GS11]{GSAnnals} M.~Gross and B.~Siebert: \emph{From affine
geoemtry to complex geometry}, Annals of Mathematics,
{\bf 174}, (2011), 1301-1428.

\bibitem[Gr60]{Gr60} A.~Grothendieck, \'El\'ements de g\'eom\'etrie
alg\'ebrique. I. Le langage des sch\'emas. \emph{Inst. Hautes
\'Etudes Sci.\ Publ.\ Math.} {\bf 4}, 1960.

\bibitem[H77]{H77} R.~Hartshorne, Algebraic Geometry, \emph{Grad. Texts
in Math.} {\bf 52}, Springer, 1977.

\bibitem[HK00]{HK00} Y.~Hu, and S.~Keel, \emph{Mori dream spaces and GIT. 
Dedicated to William Fulton on the occasion of his 60th birthday,}
Michigan Math.\ J.\ {\bf 48} (2000), 331--348.

\bibitem[HKT09]{HKT09} P.~Hacking, S. ~Keel, and J. ~Tevelev, 
\emph{Stable pair, tropical, and log canonical compactifications of moduli spaces of del Pezzo surfaces.}
 Invent. Math. 178 (2009), no. 1, 173–227. 

\bibitem[I77]{I77} S. Iitaka, On logarithmic Kodaira dimension of algebraic
varieties, in \emph{Complex analysis and algebraic geometry}, 175--189.
Iwanami Shoten, Tokyo, 1977.
\bibitem[K12]{K12} B. Keller, \emph{Cluster algebras and derived categories}, preprint, 2012,
arXiv:1202.4161 
\bibitem[KM98]{KM98} J.~Koll\'ar, S.~Mori, \emph{Birational geometry
of algebraic varieties}, Cambridge Tracts in Math.~134. C.U.P., 1998.

\bibitem[KS06]{KS06} M.\ Kontsevich, and Y.\ Soibelman,
        \emph{Affine structures and non-Archimedean analytic spaces}, in:
        \textsl{The unity of mathematics} (P.~Etingof, V.~Retakh,
        I.M.~Singer, eds.),  321--385, Progr.\ Math.~244,
        Birkh\"auser~2006.

\bibitem[L90]{L90} G. Lusztig, \emph{Canonical bases arising from quantized enveloping algebras},
J. Amer. Math. Soc. 3 (1990), no. 2, 447--498.

\bibitem[LP12]{LP12} T.~Lam and P.~Pylyavskyy, \emph{Laurent phenomenon
algebras}, preprint, 2012.

\bibitem[LLZ13]{LLZ13} K.~Lee, L.~Li, and A.~Zelevinsky, \emph{Positivity
and tameness in rank 2 cluster algebras}, preprint, 2013.

\bibitem[L81]{L81} E. Looijenga,
Rational surfaces with an anticanonical cycle.  Ann. of Math.~(2)~114
(1981), no. 2, 267--322.

\bibitem[KS04]{KS04} M.~Kontsevich, and Y.~Soibelman:
\emph{Affine structures and non-archimedean analytic spaces},
The unity of mathematics,  321--385,
Progr.\ Math., 244, Birkh\"auser Boston, Boston, MA, 2006.

\bibitem[Sp13]{Sp13} D.~Speyer, \emph{An infinitely generated upper cluster
algebra,} preprint, May, 2013.

\bibitem[P90]{P90} U.~Persson, \emph{Configurations of Kodaira fibers
on rational elliptic surfaces}, Math.\ Z., {\bf 205}, (1990) 1--47.

\end{thebibliography}
\end{document}